\numberwithin{equation}{section}
\numberwithin{figure}{section}
\theoremstyle{plain}
\newtheorem{thm}{\protect\theoremname}
\theoremstyle{definition}
\newtheorem{defn}[thm]{\protect\definitionname}
\theoremstyle{remark}
\newtheorem{rem}[thm]{\protect\remarkname}
\theoremstyle{plain}
\newtheorem{lem}[thm]{\protect\lemmaname}
\theoremstyle{plain}
\newtheorem{prop}[thm]{\protect\propositionname}
\theoremstyle{plain}
\newtheorem{cor}[thm]{\protect\corollaryname}
\theoremstyle{plain}
\theoremstyle{plain}
\newtheorem{ex}[thm]{\protect\examplename}
\theoremstyle{plain}
\theoremstyle{plain}
\newtheorem{ass}[thm]{\protect\assumptionname}
\providecommand{\corollaryname}{Corollary}
\providecommand{\definitionname}{Definition}
\providecommand{\lemmaname}{Lemma}
\providecommand{\propositionname}{Proposition}
\providecommand{\remarkname}{Remark}
\providecommand{\theoremname}{Theorem}
\providecommand{\hypothesisname}{Hypothesis}
\providecommand{\problemname}{Problem}
\providecommand{\examplename}{Example}
\providecommand{\notationname}{Notation}
\providecommand{\remarkname}{Remark}
\providecommand{\assumptionname}{Assumption}
\newcommand{\blue}[1]{{\color{blue} #1}}
\newcommand{\var}{\,\cdot\,}
\newcommand{\TV}{\text{TV}}
\newcommand{\cA}{\mathcal{A}}
\newcommand{\cF}{\mathcal{F}}
\newcommand{\cH}{\mathcal{H}}
\newcommand{\cN}{\mathcal{N}}
\newcommand{\scL}{\mathscr{L}}
\newcommand{\EE}{\mathbb{E}}
\newcommand{\PP}{\mathbb{P}}
\newcommand{\RR}{\mathbb{R}}
\newcommand{\WW}{\mathbb{W}}
\newcommand{\bB}{\mathbf{B}}
\newcommand{\bc}{\mathbf{c}}
\newcommand{\bn}{\mathbf{n}}
\newcommand{\bw}{\mathbf{w}}
\newcommand{\fB}{\mathfrak{B}}
\newcommand{\dd}{\mathop{}\!\mathrm{d}}
\newcommand{\mcA}{\mathcal{A}}
\newcommand{\mcB}{\mathcal{B}}
\newcommand{\mcF}{\mathcal{F}}
\newcommand{\mcH}{\mathcal{H}}
\newcommand{\mcJ}{\mathcal{J}}
\newcommand{\mcL}{\mathcal{L}}
\newcommand{\mcP}{\mathcal{P}}
\newcommand{\mcX}{\mathcal{X}}
\newcommand{\mbE}{\mathbb{E}}
\newcommand{\mbI}{\mathbb{I}}
\newcommand{\mbP}{\mathbb{P}}
\newcommand{\mbR}{\mathbb{R}}
\newcommand{\mbW}{\mathbb{W}}
\newcommand{\mfA}{\mathfrak{A}}
\newcommand{\mfB}{\mathfrak{B}}
\newcommand{\msL}{\mathscr{L}}
\newcommand{\bmeta}{\boldsymbol{\eta}}
\newcommand{\bnu}{\boldsymbol{\nu}}
\newcommand{\eps}{\varepsilon}
\newcommand{\Lip}{\text{Lip}}
\newcommand{\tzero}{|_{t=0}}
\DeclareMathOperator*{\esssup}{ess\,sup}
\DeclareMathOperator*{\argsup}{arg\,sup}
\long\def\avi#1{{\color{red}Avi:\ #1}}
\let\oldtocsection=\tocsection
\let\oldtocsubsection=\tocsubsection
\let\oldtocsubsubsection=\tocsubsubsection
\renewcommand{\tocsection}[2]{\hspace{0em}\oldtocsection{#1}{#2}}
\renewcommand{\tocsubsection}[2]{\hspace{1em}\oldtocsubsection{#1}{#2}}
\renewcommand{\tocsubsubsection}[2]{\hspace{2em}\oldtocsubsubsection{#1}{#2}}
\numberwithin{thm}{section}
\newcommand{\supp}{\text{Supp}\mathop{}\!}
\setlist[description]{leftmargin=\parindent,labelindent=\parindent}
\title{Stochastic Analysis of Overlapping Generations Models Under Incomplete Markets}
\author{Cangxiong Chen \and Sigmund Ellingsrud \and Fabian N. Harang \and Alfonso Irarrazabal \and Avi Mayorcas }
\address{Cangxiong Chen:
 Institute for Mathematical Innovation, University of Bath, BA2 7AY, Bath, UK}
\email{cc2458@bath.ac.uk} 
\address{Sigmund Ellingsrud:
Department of Economics, BI Norwegian Business School, 0484 Oslo, Norway}
\email{sigmund.ellingsrud@bi.no} 
\address{Fabian A. Harang:
 Department of Economics, BI Norwegian Business School, 0484 Oslo, Norway}
\email{fabian.a.harang@bi.no}
\address{Alfonso Irarrazabal:
Department of Economics, BI Norwegian Business School, 0484 Oslo, Norway}
\email{alfonso.irarrazabal@bi.no} 
\address{Avi Mayorcas:
 Department of Mathematical Sciences, University of Bath, BA2 7AY, Bath, UK}
\email{am2735@bath.ac.uk}
\begin{document}

\begin{abstract}
   We provide a stochastic analysis of an overlapping‐generations model under incomplete markets. By casting individual optimization with idiosyncratic income risk into a forward–backward stochastic differential‐equation (FBSDE) system, we (i) establish existence and uniqueness of the dynamic general‐equilibrium interest rate; (ii) derive analytical and semi‐explicit formulas for both the equilibrium interest‐rate path and the “natural” borrowing limit—defined as the discounted expected shortfall of future income; and (iii) show how to compute closed‐form derivatives of these equilibrium objects with respect to key model parameters. Our FBSDE‐based approach yields tractable policy functions and equilibrium mappings without relying on high‐dimensional PDE methods, offering clear insights into how income dynamics and demographic structure drive interest‐rate fluctuations and credit constraints.\\[1ex]
   
\textbf{MSC2020 Subject Classification:} 91B70, 91B74, 93E20, 65K10, \\[1ex]
	\textbf{Keywords:} Heterogeneous agent models, microeconomic modelling, stochastic optimal control, overlapping generations, numerical optimisation.
\end{abstract}

\thanks{\emph{Acknowledgments.} 
FH gratefully acknowledge the support of the Center for Advanced Studies (CAS) in Oslo, which funded the Signatures for Images project during the academic year 2023/2024, when this work started. AM and CC were supported by EPSRC Mathematical Sciences Small Grant UKRI253: FBSDE Methods in Heterogeneous Agent Models.}

\maketitle
\tableofcontents

\section{Introduction}

Recent demographic shifts, most notably declining fertility rates, combined with rapid technological innovation, have far-reaching effects on economic inequality. These trends impact cohorts and income groups unevenly, underscoring the critical need for models that explicitly incorporate agent heterogeneity.
Since \cite{samuelson1958} and \cite{diamond1965}, the overlapping-generations (OLG) framework has been pivotal in macroeconomics, particularly for analyzing the distributional effects of demographic and technological changes. OLG models capture intergenerational dynamics in finite-lived economies, where agents optimally allocate consumption and savings over time and interact through aggregate markets.
\cite{riosrull1996_OLG} and \cite{huggett1996_OLG} further enriched the framework by introducing idiosyncratic income risks and market incompleteness. In such settings, agents cannot fully insure against individual shocks, preventing perfect risk‐pooling. Consequently, higher‐income or older cohorts self‐insure more effectively than lower‐income or younger cohorts, generating persistent cross‐sectional dispersion in wealth and consumption. Transitory labor‐income shocks thus give rise to enduring differences in lifetime welfare.

It is worth noting that overlapping‐generations models can yield theoretical predictions absent in infinitely lived‐agent frameworks. \cite{galorryder1989}, \cite{cass1972}, and \cite{shell1971} highlight equilibrium multiplicity and efficiency concerns in OLG settings. \cite{aiyagari_85_observational} shows that, under specific conditions, aggregate outcomes in OLG models coincide with those in corresponding infinitely lived‐agent models. Moreover, \cite{yaari1965} and \cite{blanchard1985} formulate infinite‐horizon representations of OLG economies with stochastic lifespans in a deterministic setting.  To our knowledge, there is little work analyzing equilibrium in stochastic OLG economies under incomplete markets and heterogeneous agents, as the resulting equilibrium conditions give rise to highly nonlinear systems of PDEs that complicate equilibrium characterization.

In this paper we introduce a novel approach to the rigorous study of continuous‐time OLG models with heterogeneous agents by combining tools from \textit{stochastic control theory}, notably the Pontryagin maximum principle, and \textit{forward–backward stochastic differential equations (FBSDEs)}. We derive explicit and semi‐explicit expressions for economic quantities of interest in these models, in particular the \emph{optimal consumption policy}, \emph{equilibrium interest rates}, \emph{natural borrowing limits} and functional derivatives of equilibrium interest rates with respect to capital supply. These results furnish new insights into the impact of demographic structure and income dynamics on macroeconomic outcomes. Our method represents a significant advance over existing PDE‐based techniques, which typically impose restrictive assumptions such as wealth or income bounds or equilibrium stationarity \cite{achdou_han_lasry_lions_moll_21_income, huggett_93_risk_free, aiyagari_94_uninsured}.

 We make contributions to the economic literature in providing analysis of equilibrium interest rates in continuous time OLG models. In \cite{DemichelisPolemarchakis2007_OLG} and \cite{Edmond2008_OLG} the model is cast with fully deterministic endowments (no idiosyncratic risk). \cite{DemichelisPolemarchakis2007_OLG} construct a finite‑horizon, log‑utility example in which there is (up to time‑shift) a unique non‑stationary equilibrium price path in continuous time. \cite{Edmond2008_OLG} shows that, under log utility, the equilibrium problem can be written as a linear integral equation for the inter-temporal interest rate function, which admits a unique globally stable solution and is straightforward to approximate numerically.  Analogous existence-and-uniqueness results for OLG models with idiosyncratic income shocks and incomplete markets have not been established.  

There exists work showing existence, and in certain instances,  uniqueness of recursive equilibria in heterogeneous‐agent models with incomplete markets. The vast majority of these results pertain to discrete‐time economies (e.g., \cite{AcemogluJensen2015_JPE}). \cite{Proehl2024} contributes to this literature by incorporating aggregate risk: leveraging the monotonicity of equilibrium correspondences and tools from convex analysis, she establishes that the generalized Euler operator is maximally monotone, which yields both a convergent solution algorithm and a unique recursive equilibrium in a Bewley–Aiyagari‐style growth model subject to aggregate shocks. Building on these methods, \cite{Cao2020_JET} demonstrates the existence of extended recursive equilibria, in which policy and price functions also depend on the value function. \cite{BrummKryczkaKubler2017_Eca} further extend the analysis by proving the existence of sunspot‐dependent recursive equilibria within finite‐agent approximations.   Our work adds to this strand of literature by studying a continuous‐time overlapping‐generations model under incomplete markets, enabling us to apply stochastic‐analysis tools, specifically a forward–backward stochastic differential‐equation (FBSDE) system, to establish existence and uniqueness of the dynamic general‐equilibrium interest rate.

A second key result  of our work is the derivation of an \textit{endogenous natural borrowing limit} for individuals. In classical OLG and heterogeneous-agent models, borrowing limits are often required to prevent Ponzi schemes \cite{aiyagari_94_uninsured} and remain identical for all households. In contrast, our framework shows that a heterogeneous, natural borrowing limit arises \textit{endogenously} from agents' optimization behaviour and terminal wealth constraints. We prove that the natural borrowing limit depends on the conditional expectation of future income. This result reveals that the borrowing limit depends on the present value of expected future income, discounted by the equilibrium interest rate path. Notably, this borrowing limit is \textit{heterogeneous} across agents, as it reflects their individual income processes and wealth trajectories. This stands in sharp contrast to exogenous limits assumed in many existing models, making our framework more flexible for economic applications. Deriving model implications such as the heterogeneous natural borrowing limit is a particular advantage of our framework.

From a mathematical perspective, our work contributes to the growing literature on probabilistic methods for dynamic equilibrium analysis. While continuous-time formulations of models such as \cite{aiyagari_94_uninsured} and \cite{huggett_93_risk_free} have been considered using PDE approaches \cite{achdou_han_lasry_lions_moll_21_income,achdou_francisco_lasry_lions_moll_14_pde,ambrose_21_existence}, we adopt a purely stochastic formulation. In particular, we derive optimal consumption policies using solutions to coupled forward-backward stochastic differential equations. This approach avoids the boundary constraints required in PDE methods and provides semi-explicit solutions for consumption and wealth dynamics.
 By treating the equilibrium interest rate as a functional of aggregate distributions, we establish a novel fixed-point argument for solving the dynamic general equilibrium problem.
We provide analytical expressions for the natural borrowing limit and prove its stability under perturbations in income processes or interest rate paths. This stability is essential for numerical implementations and economic interpretations.


The remainder of the paper is organized as follows. Section~\ref{sec:model_setup} introduces the overlapping generations model in mathematical terms and describes the main objects we will work with in the remainder of the paper. Section~\ref{sec:life_cycle_partial_equilibria} contains the main workhorse result of the paper, Theorem~\ref{thm:life_cycle_optimal_rep}, which gives an FBSDE representation of the optimal consumption policy in the life-cycle model. In addition, Section~\ref{sec:life_cycle_partial_equilibria} contains a presentation of the deterministic life-cycle model, presentation of an a priori natural borrowing limit and proofs of existence and uniqueness of an optimal solution to the stochastic life-cycle model under suitable assumptions. Section~\ref{sec:life_cycle_gen_eq} demonstrates important stability results on the optimal consumption policy in the life-cycle model and as a result demonstrates existence and uniqueness of a general equilibrium interest rate in the life-cycle case. Finally, Section~\ref{sec:OLG} contains the most economically important results of the paper, combining the work of previous sections to analyse the overlapping generations model. Here we provide proofs of existence and uniqueness of the optimal consumption policies for each generation as well as existence and local uniqueness of a general equilibrium interest rate under certain reasonable assumptions on the coefficients and model parameters. Section~\ref{sec:numerics} presents numerical simulations of partial equilibria in the life-cycle model.

\subsection{Notation}
Given a probability space $(\Omega,\mcF,\mbP)$, a normed vector space $(\mcX,\|\,\cdot\,\|_{\mcX})$ and a Borel measurable map $X:\Omega \to \mcX$ we define the family or norms
    \begin{equation*}
    \|X\|_{\scL^p_\omega}\coloneqq \begin{cases}
        \mbE[\|X\|^p_{\mcX}]^{\nicefrac{1}{p}}, & p\in [1,+\infty),\\
        \esssup_{\omega\in \Omega} \|X(\omega)\|_\mcX, & p=+\infty,
    \end{cases} 
\end{equation*}
and let $\msL^p(\Omega;\mcX)$ denote the space of all Borel measurable maps $X:\Omega\to \mcX$ such that $\|X\|_{\msL^p_\omega}<\infty$. We also set
\begin{equation*}
    \msL^0(\Omega;\mcX) \coloneqq \{X: \Omega \to \mcX\,:\, X \, \text{ is Borel measurable }\}.
\end{equation*}
Similarly, given an interval $I\subseteq \mbR$ and a measurable map $f:I\to \mcX$ we define the family of norms
  \begin{equation*}
    \|f\|_{\scL^p_I }\coloneqq \begin{cases}
       \left( \int_I \|f(t)\|^p_{\mcX}\dd t\right)^{\nicefrac{1}{p}}, & p\in [1,+\infty),\\
        \esssup_{t\in I} \|f(t)\|_\mcX, & p=+\infty,
    \end{cases} 
\end{equation*}
and set $\msL^p(I;\mcX)$ to be the space of all measurable maps $f:I\to \mcX$ such that $\|f\|_{\msL^p_I} <\infty$. For convenience, when $|I|= L$ for some $L>0$ and when it will not cause confusion we simply write $\|f\|_{\msL^p_L}$ for the norm on $\msL^p([0,T];\mcX)$. In the same setting we write $C_b(I;\mcX)$ for the space of bounded continuous maps $f:I\to \mcX$ equipped with the norm $\|f\|_{\msL^\infty_I}$ and $C^1_b(I;\mcX)$ for the space of bounded, continuously differentiable functions $f:I\to \mcX$ equipped with the norm
\begin{equation*}
    \|f\|_{C^1_I}\coloneqq \|f\|_{\msL^\infty_I} + \|f'\|_{\msL^\infty_I}.
\end{equation*}
We write $\text{Lip}(I;\mcX)$ for the space of all bounded, Lipschitz continuous maps $f:I\to \mcX$ equipped with the norm
\begin{equation}\label{eq:Lip_norm_def}
    \|f\|_{\Lip_I} \coloneqq \|f\|_{\msL^\infty_I}+ \esssup_{t\neq s \,\in I} \frac{\|f(t)-f(s)\|_{\mcX}}{|t-s|}. 
\end{equation}

\section{Model setup}\label{sec:model_setup}

In this section we outline a continuous time overlapping generations general equilibrium model where agents live for a given finite interval of time and face stochastic and independent income shocks. Markets are incomplete in the sense that households can only self‐insure against bad shocks by trading a single risk‐free bond.  In equilibrium, aggregate savings must by equal to an exogenous capital supply.

We fix a lifespan $L>0$, a probability space $(\Omega,\mcF,\mbP)$ which carries an uncountable family of Brownian motions $\{B^b\}_{b\in \mbR}$ where each $B^b$ is shifted so that $B^b_b =0$. Let $\{\mcF^b_t\}_{t\in [b,b+L]}$ be the augmented natural filtration associated to $B^b|_{[b,b+L]}$. Then, for a given continuous path 
\begin{equation*}
    \mbR \ni t\mapsto r_t \in \mbR_+,
\end{equation*}
for each $b\in \mbR$ we describe a household by its wealth and income which are modelled according to the dynamics
\begin{align}
        w^b_t(r) &= w^b_b + \int_b^t r_s w^{b}_s \dd s + \int_b^t \eta^b_s  \dd s - \int_b^t c_s^b \dd s, \quad t\in [b,b+L]\label{eq:olg_wealth_intro}
        \\
\eta^b_t &= \eta^b_b +\int_{b}^t \mu_s(\eta^b_s) \dd s + \int_{b}^t \sigma_s(\eta^b_s) \dd B^b_s, \quad t\in [b,b+L] \label{eq:olg_income_intro}
\end{align}
where $\eta^b_b \sim \rho^\eta_b$, $w^b_b \sim \rho^w_b$ for families of laws $\{\rho^\eta_b\}_{b\in \mbR} \subset \mcP(\mbR_+),\, \{\rho^w_b\}_{b\in \mbR} \subset \mcP(\mbR)$ and
\begin{equation*}
    \mu:\mbR_+\times \mbR \times \Omega \to \mbR \quad \text{and}\quad \sigma:\mbR_+\times \mbR \times \Omega \to \mbR,
\end{equation*}
  are suitable coefficients such that for each $b\in \mbR$, there exists a unique strong solution $\eta^b$ to \eqref{eq:olg_income_intro}.\footnote{One could in principle allow $\mu,\,\sigma$ to depend on $b\in \mbR$ as well, provided each $\mu^b,\,\sigma^b$ satisfy the necessary requirements and their dependence on $b$ is sufficiently regular. For simplicity of presentation we do not consider this case here but no fundamental change would be required to obtain results analogous to ours.} Each realisation $B^b(\omega)$ for $\omega\in \Omega$ represents the possible random shocks experienced by a household born at $b\in \mbR$ and the economy is made up of infinitely many of these heterogeneous households. We will use the terms household and individual interchangeably throughout the paper.

Each household chooses their consumption $t\mapsto c^b_t$ to maximize an expected running discounted utility over the interval $[b,b + L]$ and their expected utility of wealth at the terminal time $b + L$. Either the desire for a pension fund, as in the Merton model, or a bequest for future generations motivates the expected utility of terminal wealth. To formulate the optimization problem in mathematical terms, we take running and terminal convex utility functions $u_1,\,u_2:\RR\rightarrow \RR\cup\{-\infty\}$ and define the \emph{discounted payoff} of an individual born at time $b\in \mbR$, at time $t\in [b,b+L]$ with wealth $w^b$, income $\eta^b$ and consumption policy $c^b$ by the expression
\begin{equation}\label{eq:olg_payoff_discounted}
   \tilde{ \mcJ}^b_{L}\big(t,w^b,\eta^b|c^b\big)=e^{-\delta (b+L-t)}\EE\left[\int_{t}^{b+L} e^{\delta(b+L- s)}u_1\big(c^b_s\big)\dd s + \lambda \, u_2\big(w^b_{b+L}\big)\right], \quad t\in [b,b+L],
\end{equation}
where $\delta\geq 0$ is a common discount factor and $\lambda\geq 0$
 measures the bequest motive's intensity.\footnote{As with $\mu,\,\sigma$, there is no fundamental mathematical challenge to allowing $\delta,\,\lambda$ to depend on $b\in \mbR$. Provided this dependence is sufficiently regular and bounded then our main results would apply with only minor, cosmetic change.} When it will not cause confusion we set
 \begin{equation*}
 \tilde{ \mcJ}^b_{L}\big(w^b,\eta^b|c^b\big) \coloneqq  \tilde{ \mcJ}^b_{L}\big(b,w^b,\eta^b|c^b\big).
 \end{equation*}
 We define the admissible family of consumption paths as
 \begin{equation*}
     \mcA^b \coloneqq \left\{c^b:[b,b+L]\to \mbR\,:\, \text{progressively } \{\mcF^{b}_t\}_{t\in [b,b+L]}\text{-measurable and continuous} \right\}.
 \end{equation*}
 Noting that a utility maximising consumption policy for $\tilde{ \mcJ}^b_L$ also maximises utility for $C\tilde{ \mcJ}^b_L$ given any $C>0$. For mathematical convenience we will, therefore, actually work with the \emph{inflated payoff},
 \begin{equation}\label{eq:olg_payoff_inflated}
 \mcJ^b_L\big(t,w^b,\eta^b|c^b\big)=e^{\delta(b+L-t)}\tilde{\mcJ}^b_L(t,w^b,\eta^b|c^b) =\EE\left[\int_t^{b+L} e^{\delta(b+L- s)}u_1\big(c^b_s\big)\dd s + \lambda \, u_2\big(w^b_{b+L}\big)\right]. 
 \end{equation}

\begin{rem}[CRRA Utility Functions]
    A common example of utility functions to keep in mind are those known as the \emph{constant-relative-risk-aversion (CRRA)}  utility functions. For $\gamma >0$, these are given by the expression
\begin{equation}\label{eq:crra_utility}
    u(x)= \begin{cases}
             \frac{ x^{1-\gamma}}{1-\gamma}, \quad \mathrm{for} \quad &x \geq 0,
             \\
             -\infty \quad &x<0.
    \end{cases}
\end{equation} 
These functions are convex with unbounded derivative at $x=0$, this models an individuals strong desire to consume something rather than nothing (or hold any positive amount of wealth over no wealth). This unbounded gradient, however, poses inevitable mathematical challenges. In subsequent sections we discuss approximations to \eqref{eq:crra_utility} which retain its central features, see Example~\ref{ex:quad_crra_approx}. Note that in \eqref{eq:olg_payoff_discounted} we allow for differing running and terminal utilities $u_1,\,u_2$. In the case of both being CRRA this would be achieved by choosing two parameters $\gamma_1,\,\gamma_2 >0$.
\end{rem}

When each individual optimizes the discounted payoff function given in \eqref{eq:olg_payoff_inflated} with respect to consumption $c^b$, they find the \emph{discounted value function}
\begin{equation}\label{eq:value_function_1}
    v^b_L\big(t,w^b,\eta^b\big)=\sup_{c^b\in \cA^b} \, \mcJ^b_L\big(t,w^b,\eta^b|c\big),\quad t\in [b,b+L].  
\end{equation}
Furthermore, we define
\begin{equation}\label{eq:optimal consumption}
    c^{b;*}_t = \argsup_{c^b\in \cA^b} \, \mcJ^b_L\big(t,w^b,\eta^b|c^b\big),\quad t\in [b,b+L],
\end{equation}
as the optimal path of consumption for the agent born at $b\in \mbR$. The evolution of optimally controlled wealth, $w^{b;*}$, is therefore given by the equation

\begin{equation}\label{eq:optimal_wealth}
    \dd w_t^{b;*} =\big(r_t w_t^{b;*}-c_t^{b;*}+\eta^*_t\big)\dd t,\quad w^{b;*}_b \sim \rho^w_b.
\end{equation}
\begin{rem}\label{rem:conumption_dependence}
    Note that finding the optimal consumption $c^{b;*}$ for each $b\in \mbR$ as prescribed by \eqref{eq:value_function_1} gives a functional pathwise dependence of $c^{b;*}$ on $w^{b}$ and the interest rate $r$. In principle it also depends on the income process $\eta^b$ but this dependence is encoded by $w^b$. Furthermore, since the wealth dynamic also depends on the interest rate path, we can re-write \eqref{eq:optimal_wealth} parsimoniously as
    \begin{equation}\label{eq:optimal_wealth_dependence}
    \dd w_t^{b;*} =\big(r_t w_t^{b;*}-c_t^{b;*}\big(w^{b;*}(r),r\big)+\eta^*_t\big)\dd t,\quad w^{b;*}_b \sim \rho^w_b.
\end{equation}
We will later give a semi-explicit expression for this functional dependence, which reflects the known explicit solution to the related life-cycle model with deterministic income, Section~\ref{sec:deterministic_life_cycle}.
\end{rem}

Since total capital is fixed in our model, the interest rate must be chosen to keep aggregate wealth constant. In the OLG model this requires us to integrate total wealth over a probability measure describing the chance of being born at each time. To this end we introduce the notion of a \emph{flow of demographic measures}
\begin{equation}
    \mbR \ni t \mapsto \nu_t \in \mcP([t-L,t]),
\end{equation}
where for any Borel set $A\in \mcB([t-L,t])$ the quantity $\nu_t(A) \in [0,1]$ describes the probability that a randomly chosen individual alive in the economy at time $t\in \mbR$ was born in $A$. Hence, the aggregate wealth, income and consumption are written as follows, taking account of the functional dependencies described in Remark~\ref{rem:conumption_dependence},
\begin{align}
    W^L_t(r) &\coloneqq \int_{t-L}^{t} w_t^{b}(r) \dd \nu_t(b)\label{eq:aggregate_wealth_intro}\\
    C^L_t(w,r) &\coloneqq \int_{t-L}^t c^{b}_t(w^b,r) \dd \nu_t(b) , \label{eq:aggregate_consumption_intro} \\
    N^L_t &\coloneqq \int_{t-L}^t \eta^{b}_t \dd \nu_t(b). \label{eq:aggregate_income_intro} 
\end{align}
In this setting, the notion of a general equilibrium is defined as follows.

\begin{defn}\label{def:intro_olg_clearing_rate}
    Given a \emph{lifespan} $L>0$ and a capital supply $K\in C(\RR;\RR)$  we say that the associated overlapping generations model \eqref{eq:olg_wealth}-\eqref{eq:olg_income} and \eqref{eq:aggregate_wealth_intro}-\eqref{eq:aggregate_income_intro} is in \emph{general equilibrium} if the interest rate $r:\mbR\to \mbR$ is such that
    \begin{equation}\label{eq:olg_clearing_intro}
        \mbW^L_t(r) \coloneqq \mbE\left[W^L_t(r)\right] = K_t \quad \text{for all}\,\, t\in \mbR.
        \end{equation}
        For brevity we sometimes say that $r$ is a \emph{general equilibrium interest rate} for the associated overlapping generations model.
\end{defn}

\begin{rem}
The path $t\mapsto K_t$ represents the capital supply of the economy at time $t$ and is given exogenously in the current paper for simplicity, and is often set to $0$ in many applications. It provides a relationship with the amount of capital that is borrowed and the amount that must be saved. In economic theory, the capital supply $K$ is often determined endogenously through a production technology, see e.g. \cite{aiyagari_94_uninsured} and \cite{krusell_smith_98_income} for details on this. For simplicity of presentation we keep the production function abstract here, simply positing an exogenous capital flow, but plan to investigate the combined setting in more detail in future works.  
\end{rem}

\begin{rem}
   As noted in Remark~\ref{rem:conumption_dependence}, the optimal consumption is a pathwise functional of the wealth and interest rate processes. Thus, the closure condition \eqref{eq:olg_clearing_intro} exhibits a dependence of the \emph{general equilibrium} interest rate on both the law of the family of processes $\{w^b\}_{b\in \mbR}$ and the \emph{flow of demographic measures}. In this regard, our general equilibrium problem is closely related to the setting of mean field control. Agents do not select their consumption policy $c^{b;*}$ in order to advantageously affect the interest rate $r$, however, the interest rate must be set in order to attain a desired distribution of wealth. However, our model does not encode a notion of optimal wealth distribution, as would be common in a genuine mean field control setup. Instead, the general equilibrium condition is simply a fixing of the scenario and at important economic output of the model. As discussed in \cite{carvalho2016} there is evidence that demographic shift has a measurable impact on the real interest rate. This is something that we aim to investigate computationally.
     
Solving the general equilibrium problem for \eqref{eq:olg_wealth_intro}-\eqref{eq:olg_payoff_inflated} is equivalent to the following procedure:
\begin{enumerate}[label=\roman*)]
    \item \label{it:fix_rate} Fix a deterministic interest rate $r\in C(\mbR;\RR)$ 
    \item \label{it:optimal_solve} For each $b\in \mbR$, solve the optimal control problem 
    \begin{equation*}
        \begin{cases}
            \sup_{c^b \in \mcA^b} \mcJ^b_L(w^b,\eta^b|c^b)
            \\
            \dd w^b_t = (r_tw^b_t-c^b_t+\eta^b_t)\dd t , \quad w^b_b \sim \rho^w_b,
            \\
            \dd \eta^b_t = \mu_t(\eta^b_t)\dd t +\sigma_t(\eta^b_t) \dd \beta_t, \quad \eta^b_b \sim \rho^\eta_b.
        \end{cases}
    \end{equation*}
    \item \label{it:gen_eq_rate} View the optimal solution $(w^*,c^*)$ as a functional $r\mapsto  (w^*(r),c^*(r))$ and show that for a given flow of demographic measures $t\mapsto \nu_t$, there exists a unique continuous path $\bar{r}$ such that 
    \begin{equation*}
    	\mbE\left[ \int_{t-L}^t w^{b;*}_t(\bar{r}) \dd \nu_t(b)\right] = K_t,\quad \text{for all }\, t\in \mbR.
    \end{equation*}
\end{enumerate}
We complete these three steps under a number of reasonable regularity and structural assumptions on the coefficients involved as well as a restriction to \emph{local} uniqueness of the general equilibrium interest rate in the space of all continuous paths. As already mentioned, along the way we also derive explicit or semi-explicit expressions for a number of economically relevant aggregates and outputs of the model.
\end{rem}
Since Step~\ref{it:optimal_solve} is identical (up to shifting the domain) for each $b\in \mbR$, our first focus will be on the so-called \emph{life-cycle} model which considers an economy of one generation living in isolation on $[0,L]$ where each household solves the same optimal control problem. 

\subsection{Main Results} We informally summarise the main mathematical results of our paper, with references to precise statements and proof given in subsequent sections. These mathematical results are supplemented with numerical examples and experiments in Section~\ref{sec:numerics}.

 We make the following standing assumptions on the utility functions.

 \begin{ass}\label{ass:utility_reg_intro}
     The utility functions $u_1$ and $u_2$ are concave and such that $(u_1')^{-1}$ and $u_2'$ are  both Lipschitz continuous and such that there exists a $\kappa>0$ for which
\begin{equation}
\begin{aligned}
     &\inf_{x\in \mbR} u'_2(x) \geq 0,\quad  \sup_{y \in [0,+\infty)} |(u'_1)^{-1} (y)|\vee |u'_2(y)| \leq \kappa,\\
    & \sup_{x\neq y \in \mbR} \left(\frac{|(u'_1)^{-1}(x)-(u'_1)^{-1}(y)|}{|x-y|} \vee \frac{|u'_2(x)-u'_2(y)|}{|x-y|} \right)<\kappa.
\end{aligned}
\end{equation}
\end{ass}

Our main result concerns the OLG model and is stated somewhat informally below. A full statement can be found as Theorem~\ref{thm:olg_well_posed}.
\begin{thm}[Partial and General Equilibria of the OLG Model]
Let $\delta,\, \lambda,\, \kappa >0$, $u_1,\, u_2$ satisfy Assumption~\ref{ass:utility_reg_intro} for the same $\kappa$, $\{\rho^w_b\}_{b\in \mbR}\subset \mcP(\mbR),\, \{\rho^\eta_b\}_{b\in \mbR}\subset \mcP(\mbR_+)$ and 
\begin{equation*}
    \mu:\mbR_+\times \mbR \times \Omega \to \mbR \quad \text{and}\quad \sigma:\mbR_+\times \mbR \times \Omega \to \mbR,
\end{equation*}
    be such that for every $b\in \mbR$ there exists a unique strong solution to the income equation \eqref{eq:olg_income_intro}. Then, if $L>0$ is sufficiently small as a function of all other relevant parameters,
    \begin{enumerate}[label=\roman*)]
        \item for every $b\in \mbR$ there exists a unique, solution to the optimal control problem 
        \begin{align}
            \sup_{c^b \in \mcA^b} &\mcJ_L^b(w^b,\eta^b|c^b) = \EE\left[\int_b^{b+L} e^{\delta(b+L- s)}u_1\big(c^b_s\big)\dd s + \lambda \, u_2\big(w^b_{b+L}\big)\right], \label{eq:olg_intro_payoff}\\
\text{Subject to:} \qquad &\begin{aligned}
    w^b_t(r) &= w^b_b + \int_b^t r_s w^{b}_s \dd s + \int_b^t \eta^b_s  \dd s - \int_b^t c_s^b \dd s,  &&t\in [b,b+L],
        \\
\eta^b_t &= \eta^b_b +\int_{b}^t \mu_s\big(\eta^b_s\big) \dd s + \int_{b}^t \sigma_s\big(\eta^b_s\big) \dd B^b_s,  &&t\in [b,b+L],
\end{aligned} \label{eq:olg_intro_dynamics}
        \end{align}
with $w^b_b \sim \rho^w_b$ and $\eta^b_b \sim \rho^\eta_b$. In addition, for each $b\in \mbR$ the optimal consumption policy is given by the expression, for $t\in [b,b+L]$
\begin{equation}\label{eq:olg_intro_consumption}
    c_t^{b;*}(w^b,r) = (u_1')^{-1}\left(\lambda \exp\left(\int_t^{L+b} (r_u-\delta)\dd u\right)\EE\left[ u'_2\left(w_{b+L}^{b}\right)|\cF_t^b \right]\right).
\end{equation}
        \item given $R>0$ and $\{\nu_t\}_{t\in \mbR}$ a \emph{flow of demographic measures} there exists a constant $C>0$ depending on all parameters such that there exists a unique \emph{general equilibrium interest rate} (in the sense of Definition~\ref{def:intro_olg_clearing_rate}) $\bar{r}$, for a constant capital supply $K \in \mbR\setminus\{0\}$, in the set
        \begin{equation}
            \bigg\{ r:\mbR\to \mbR_+\,:\, \sup_{t\in \mbR}|r_t| \leq R+C
            \bigg\}.
        \end{equation}
    \end{enumerate}
\end{thm}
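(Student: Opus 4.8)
The plan is to deduce both assertions from results established earlier in the paper: part (i) is a time-shifted instance of the life-cycle well-posedness result Theorem~\ref{thm:life_cycle_optimal_rep}, while part (ii) is a fixed-point argument for the market-clearing identity \eqref{eq:olg_clearing_intro} resting on the stability estimates of Section~\ref{sec:life_cycle_gen_eq}.

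\emph{Part (i).} For fixed $b\in\mbR$ the problem \eqref{eq:olg_intro_payoff}--\eqref{eq:olg_intro_dynamics} becomes, under the translation $s\mapsto s-b$ and re-centring of $B^b$, exactly the life-cycle problem on $[0,L]$ with the same data $\mu,\sigma,u_1,u_2,\delta,\lambda$; hence Theorem~\ref{thm:life_cycle_optimal_rep} applies once $L$ is small in terms of $\kappa$ and the other parameters, yielding a unique optimal pair $(w^{b;*},c^{b;*})$ together with its FBSDE characterisation. Explicitly, the Pontryagin principle for the wealth--income system gives a Hamiltonian whose only $c$-dependent part is $-p_tc+e^{\delta(b+L-t)}u_1(c)$, with $p$ the costate of wealth, so the maximiser is $c_t=(u_1')^{-1}\!\big(e^{-\delta(b+L-t)}p_t\big)$; the wealth-adjoint equation is the linear BSDE $\dd p_t=-r_tp_t\,\dd t+Z_t\,\dd B^b_t$ with terminal value $p_{b+L}=\lambda u_2'(w^b_{b+L})$, and since $r$ is deterministic this solves as $p_t=\lambda\exp\!\big(\int_t^{b+L}r_u\,\dd u\big)\EE[u_2'(w^b_{b+L})\mid\mcF^b_t]$. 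Substituting yields \eqref{eq:olg_intro_consumption}. Concavity of $u_1,u_2$ makes the maximum-principle conditions sufficient, so the FBSDE solution is the optimal control; existence and uniqueness of the coupled FBSDE itself follows from a contraction on continuous adapted $\msL^2$-processes, the smallness of $L$ absorbing the Lipschitz constants of $(u_1')^{-1}$ and $u_2'$ from Assumption~\ref{ass:utility_reg_intro}.

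\emph{Part (ii).} Fix the flow $\{\nu_t\}$ and, using part (i), view $r\mapsto w^{b;*}(r)$ as a map on continuous paths: solving the linear ODE \eqref{eq:optimal_wealth} gives $w^{b;*}_t(r)=e^{\int_b^t r_u\dd u}w^b_b+\int_b^t e^{\int_s^t r_u\dd u}\big(\eta^b_s-c^{b;*}_s(w^{b;*}(r),r)\big)\dd s$, with $c^{b;*}$ from \eqref{eq:olg_intro_consumption}. The clearing condition \eqref{eq:olg_clearing_intro} reads $\EE\!\int_{t-L}^t w^{b;*}_t(r)\,\dd\nu_t(b)=K$; regarded as an equation for $r$ this can be recast as a fixed point $r=\Phi(r)$ --- e.g. by differentiating in $t$ (under mild regularity of $t\mapsto\nu_t$) to isolate $r_t$, which then multiplies the aggregate wealth $=K\neq0$, or by treating the clearing identity directly as a Fredholm-type equation whose exponential weights are linearised by the smallness of $L$; dividing by $K$ is where the hypothesis $K\neq0$ enters. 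I would then show that $\Phi$ maps the closed ball $\{r:\mbR\to\mbR_+:\ \sup_t|r_t|\le R+C\}$ into itself and is a contraction there for $L$ small, the decisive input being the Lipschitz dependence of $c^{b;*}(r)$ and hence of $w^{b;*}(r)$ on $r$ proved in Section~\ref{sec:life_cycle_gen_eq}, propagated through the finite-horizon exponential factors and through the averages over $b$ and over $\Omega$, uniformly in $t$; here $R$ plays the role of an a priori bound on the equilibrium rate coming from the data and from the life-cycle analysis, and $C$ is the extra margin needed to close the contraction. Banach's fixed-point theorem then produces the unique $\bar r$ in this ball, and the sharp quantitative statement is recorded as Theorem~\ref{thm:olg_well_posed}.

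\emph{Main obstacle.} The crux is part (ii). Although \eqref{eq:olg_intro_consumption} is a closed-form expression, its right-hand side still contains $\EE[u_2'(w^b_{b+L})\mid\mcF^b_t]$ with $w^b=w^{b;*}(r)$ itself the solution of the $r$-coupled FBSDE, so the Lipschitz modulus of $r\mapsto c^{b;*}(r)$ is only implicit and must be tracked quantitatively through the stability analysis of Section~\ref{sec:life_cycle_gen_eq}; combining this with the $e^{\int r}$-type weights (bounded precisely because the horizon $L$ is finite), the $1/K$ factor, the positivity constraint $r\ge0$, and the requirement that $\Phi$ stay inside the ball is what dictates the smallness of $L$ and the particular form of the admissible set. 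Thus the smallness of $L$ does double duty: well-posedness of the FBSDE in part (i), and contractivity of $\Phi$ in part (ii).
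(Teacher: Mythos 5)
Your proposal is correct and follows essentially the same route as the paper: part (i) is handled by translating to the life-cycle problem, applying the stochastic maximum principle with the explicit solution of the linear adjoint BSDE to get \eqref{eq:olg_intro_consumption}, and closing existence/uniqueness of the coupled system by a contraction for small $L$ (the paper's Theorem~\ref{thm:life_cycle_optimal_rep} plus Corollary~\ref{cor:life_cycle_well_posed}); part (ii) is obtained, as in the paper's proof of Theorem~\ref{thm:olg_well_posed}, by differentiating the clearing condition in $t$ using the regularity of the demographic density to isolate $r_t$, and running Banach's fixed point theorem on the ball $\sup_t|r_t|\le R+C$ with the quantitative Lipschitz estimates of Sections~\ref{sec:life_cycle_gen_eq} and Lemma~\ref{lem:olg_growth_stable} supplying self-mapping and contractivity for $L$ small.
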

A rigorous statement of this result and its proof are given as Theorem~\ref{thm:olg_well_posed}. 

In the special case when utility functions $u_1$ and $u_2$ are given by CRRA utility functions of the form $\sim \frac{x^{1-\gamma}}{1-\gamma}$, the model simplifies and analytical expressions are easier to derive, conditioned on existence of solutions in this case. In particular,  without enforcing an exogenous borrowing constraint, we can show that  all agents experience an idiosyncratic natural borrowing limit, as described by our second main result. 
\begin{thm}[Natural Borrowing Limit in the OLG Model]\label{thm:olg_natural_borrowing_limit}
Let $r:\mbR\to \mbR_+$ be fixed, $\gamma_1,\,\gamma_2>0$, 
    \begin{equation}
        u_1(x) \coloneqq \begin{cases}
            \frac{1}{1-\gamma_1}x^{1-\gamma_1}, & x\geq 0,\\
            -\infty, & x<0,
        \end{cases}
        \qquad 
        u_2(x) \coloneqq \begin{cases}
            \frac{1}{1-\gamma_2}x^{1-\gamma_2}, & x\geq 0,\\
            -\infty, & x<0,
        \end{cases}
    \end{equation}
    and for any $b\in \mbR$, $w^{b;*}$ be the optimal wealth path solving \eqref{eq:olg_intro_payoff}-\eqref{eq:olg_intro_dynamics} with optimal consumption policy given by \eqref{eq:olg_intro_consumption}. Then, any optimal wealth policy it must hold that
   \begin{equation}
       w^{b;*}_t \geq \, - \int_t^{b+L} \exp\left(-\int_t^s r_u \dd u\right)\EE\big[ \eta^b_s|\cF^b_t\big]\dd s, \quad \PP-a.s, \quad \text{for all } t\in [b,b+L].
   \end{equation}
\end{thm}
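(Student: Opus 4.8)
The plan is to turn the $-\infty$ penalty built into CRRA utilities into hard sign constraints on the optimal policy, and then read off the bound from the explicit (pathwise linear) solution of the wealth equation. First I would record two consequences of optimality. By Theorem~\ref{thm:olg_well_posed} the control problem \eqref{eq:olg_intro_payoff}--\eqref{eq:olg_intro_dynamics} admits an optimizer $c^{b;*}\in\mcA^b$ with $\mcJ^b_L(w^b,\eta^b\mid c^{b;*})>-\infty$. Since $u_1(x)=u_2(x)=-\infty$ for $x<0$, finiteness of $\EE[\int_b^{b+L}e^{\delta(b+L-s)}u_1(c^{b;*}_s)\dd s+\lambda u_2(w^{b;*}_{b+L})]$ forces $c^{b;*}_s\ge0$ for Lebesgue-a.e.\ $s\in[b,b+L]$, $\PP$-a.s., and $w^{b;*}_{b+L}\ge0$, $\PP$-a.s.; alternatively one can read both facts directly off \eqref{eq:olg_intro_consumption}, using that $(u_1')^{-1}(y)=y^{-1/\gamma_1}\ge0$ and that $w^{b;*}_{b+L}\ge0$ is needed for the conditional expectation there to be finite.

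Next I would solve the wealth dynamics pathwise. For fixed $\omega$, \eqref{eq:optimal_wealth} is the linear ODE $\dd w^{b;*}_t=(r_tw^{b;*}_t-c^{b;*}_t+\eta^b_t)\dd t$ with coefficients continuous in $t$; multiplying by the integrating factor $s\mapsto\exp(-\int_b^s r_u\dd u)$ and integrating from $t$ to $b+L$ yields the variation-of-constants formula
\[
  w^{b;*}_t=\exp\!\Big(-\!\int_t^{b+L}r_u\dd u\Big)w^{b;*}_{b+L}+\int_t^{b+L}\exp\!\Big(-\!\int_t^s r_u\dd u\Big)\big(c^{b;*}_s-\eta^b_s\big)\dd s .
\]
Because $w^{b;*}_{b+L}\ge0$ and $c^{b;*}_s\ge0$, the first term and the $c^{b;*}$-part of the integral are nonnegative, so discarding them gives the pathwise inequality $w^{b;*}_t\ge-\int_t^{b+L}\exp(-\int_t^s r_u\dd u)\,\eta^b_s\dd s$, $\PP$-a.s.

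Finally, since $c^{b;*}$ is progressively measurable, $\eta^b$ and the deterministic rate $r$ are adapted, and $w^{b;*}_b$ is $\cF^b_b$-measurable, the path $t\mapsto w^{b;*}_t$ is $\cF^b_t$-measurable; taking $\EE[\,\cdot\mid\cF^b_t]$ of the last display and interchanging the conditional expectation with the time integral (justified by Tonelli, as $\eta^b\ge0$, or by the moment bounds on $\eta^b$) gives
\[
  w^{b;*}_t=\EE\big[w^{b;*}_t\mid\cF^b_t\big]\ge-\int_t^{b+L}\exp\!\Big(-\!\int_t^s r_u\dd u\Big)\EE\big[\eta^b_s\mid\cF^b_t\big]\dd s,
\]
which is the claim. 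The only genuinely delicate step is the first one, extracting the sign constraints $c^{b;*}\ge0$ and $w^{b;*}_{b+L}\ge0$: this hinges on finiteness of the optimal payoff, supplied by Theorem~\ref{thm:olg_well_posed}, together with some care that the $+\infty$ branch of $u_1$ (when $\gamma_1<1$) cannot combine with a $-\infty$ contribution to leave the payoff well-defined; the rest is routine ODE manipulation and conditioning.
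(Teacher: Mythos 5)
Your proposal is correct, and it shares the paper's essential ingredients (the paper proves the life-cycle case, Proposition~\ref{prop:natural_borrowing_limit}, and extends it $b$-wise exactly as you do implicitly): nonnegativity of the optimal consumption read off from \eqref{eq:olg_intro_consumption}, nonnegativity of terminal wealth forced by the $-\infty$ branch of $u_2$, and the variation-of-constants identity linking $w^{b;*}_t$ to $w^{b;*}_{b+L}$. Where you differ is in how the inequality is extracted: the paper first applies conditional Jensen to the consumption formula to argue that $\EE[w_{b+L}\,|\,\cF_t]=0$ forces $c_t=0$, and then reasons that ``the lower bound for $w_t$ must correspond to the situation $c_t=0$,'' evaluating $w_t$ in that worst-case scenario before discarding the $\EE[c_s|\cF_t]\ge 0$ term. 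You bypass that scenario analysis entirely: keeping the $\exp(-\int_t^{b+L}r)\,w^{b;*}_{b+L}\ge 0$ term in the backward variation-of-constants formula and dropping it together with the $c^{b;*}\ge 0$ contribution gives the bound pathwise, and a single application of monotonicity of $\EE[\,\cdot\,|\,\cF^b_t]$ (with conditional Tonelli, $\eta\ge0$) yields the statement. This is arguably cleaner and even slightly stronger (you get the pathwise inequality with $\eta^b_s$ before conditioning), whereas the paper's detour makes the economic reading — the limit is attained when consumption is shut down and expected terminal wealth is zero — explicit. One small caveat: you should not lean on Theorem~\ref{thm:olg_well_posed} for existence/finiteness here, since that theorem is proved under Assumption~\ref{ass:utility_reg_intro}, which the genuine CRRA utilities violate; but the theorem you are proving (like the paper's Proposition~\ref{prop:natural_borrowing_limit}) already \emph{assumes} the optimal pair exists, so all you need is that the optimal payoff is not $-\infty$ when ruling out $w^{b;*}_{b+L}<0$ — an implicit assumption the paper makes at exactly the same level of informality, and which your remark about the $\gamma_1<1$ case handles more carefully than the original.
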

\begin{proof}
This result is proved in the life-cycle case below, see Proposition~\ref{prop:natural_borrowing_limit}. Extending this result $b$-wise proves Theorem~\ref{thm:olg_natural_borrowing_limit}.
\end{proof}

%
    %
    %
%
%
We obtain a number of additional, interesting results which we do not detail here. In the case of OLG models with stationary populations we show that there exists a constant general equilibrium interest rate, see Section \ref{sec:olg_stationary_populations}. This result relies on asymptotic analysis of optimal wealth profiles in the life-cycle model for extremal values of the interest rate, see Section~\ref{sec:life_cycle_asymptotics}. Relatedly, we obtain a functional derivative expression describing the change in general equilibrium interest rates of the life-cycle model with respect to changes in the capital allocation.


\section{Partial Equilibria in the Continuous Time Life-Cycle Model}\label{sec:life_cycle_partial_equilibria}

 In this section we analyse the dynamics of life-cycle models  in partial equilibrium, i.e. for a given interest rate path $r$, with stochastic income given as an It\^o-diffusion. Our analysis derives optimal consumption rates based on the stochastic maximum principle through solving a system of FBSDEs. As we are proposing an alternative approach to the conventional PDE based setup for consumption/saving problems in macroeconomics, we will provide a detailed and pedagogical description of the model, and how to solve FBSDEs. While our main novelty lies in the analysis of the overlapping generations model, building up a solid micro foundation, with individuals from the life-cycle is crucial, and understanding conditions for existence, uniqueness and stability of this model will be central in our subsequent analysis. 

Throughout this section we fix $b=0$. However, all results apply for any $b\in \mbR$ up to suitably shifting the domain of definition and estimated regions commensurately. The precise problem we consider is as follows. Given a probability space $(\Omega,\mcF,\mbP)$ carrying a standard Brownian motion $B$ with natural filtration $\{\mcF_t\}_{t\geq 0}$ and we fix
\begin{equation}\label{eq:life_cycle_set}
     \mcA \coloneqq \left\{c:[0,L]\to \mbR\,:\, \text{progressively } \{\mcF_t\}_{t\in [0,L]}\text{-measurable and continuous} \right\}.
 \end{equation}
Then, find
\begin{equation}\label{eq:life_cycle_problem}
\begin{aligned}
     &\sup_{c\in \mcA}\mcJ_L(w,\eta|c)=\EE\left[\int_0^L e^{\delta(L- s)}u_1(c_s)\dd s + \lambda \, u_2(w_L)\right],\\
      &\text{subject to}  \qquad \begin{aligned}
        w_t &=w_0 + \int_0^t (r_s w_s-c_s+\eta_s)\dd s,
        \\
         \eta_t &= \eta_0 + \int_0^t \mu_s(\eta_s) \dd s +\int_0^t \sigma_s(\eta_s) \dd \beta_s,
    \end{aligned} \qquad \text{for all }\,\, t\in [0,L].
\end{aligned}
\end{equation}
\subsection{Deterministic Income Model: a Benchmark Special Case}\label{sec:deterministic_life_cycle}

We provide a short overview on solving continuous time life-cycle models in the special case with a deterministic, continuous income process, i.e. $\sigma \equiv 0$. This provides results for direct comparison with the stochastic setting, where uncertain income forces continuous updating of the optimal choice of consumption according to the random behaviour of income. 

\begin{prop}\label{prop:determ}
 Consider the special case of the optimal control problem \ref{eq:life_cycle_problem} with $b=0$, $\sigma(x) \equiv 0$, an interest rate path $r\in C([0,L];\RR)$ and degenerate distributions $\rho^w_0 = \delta_{w_0},\,\rho^\eta_0 = \delta_{\eta_0}$ for some $w_0\in \mbR,\, \eta_0 \in \mbR_+$. Then, the optimal consumption policy $c^* $ is found by evaluating
\begin{equation}\label{det_c_wT}
    c_t^* =  (u_1')^{-1}\left( \lambda e^{ \int_t^L (r_s-\delta)ds}  u_2'(w_L) \right).  
\end{equation}
\end{prop}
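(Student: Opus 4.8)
The plan is to treat \eqref{eq:life_cycle_problem} with $\sigma\equiv 0$ as a deterministic, concave optimal control problem and to extract the first–order (Pontryagin) conditions, which are here both necessary \emph{and} sufficient because the state dynamics are linear in $(w,c)$ while $u_1,u_2$ are concave. Since $\rho^w_0=\delta_{w_0}$ and $\rho^\eta_0=\delta_{\eta_0}$ are Dirac masses and $\sigma\equiv0$, the income path $s\mapsto\eta_s$ is a fixed continuous function, $w$ is a deterministic absolutely continuous path, the filtration plays no role, and $\mcA$ may be identified with $C([0,L];\mbR)$; so there are no measurability subtleties to contend with.

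First I would integrate the wealth ODE using the integrating factor $e^{-\int_0^t r_s\dd s}$, obtaining
\begin{equation*}
 w_L \;=\; w_0\, e^{\int_0^L r_s\dd s} + \int_0^L e^{\int_s^L r_u\dd u}\big(\eta_s - c_s\big)\dd s ,
\end{equation*}
so $w_L$ is an affine (hence concave, continuous) functional of $c$ and therefore $c\mapsto\mcJ_L(w,\eta|c)=\int_0^L e^{\delta(L-s)}u_1(c_s)\dd s+\lambda u_2(w_L(c))$ is concave on $\mcA$. The Gâteaux derivative in a direction $h\in C([0,L];\mbR)$ is
\begin{equation*}
 \frac{\dd}{\dd\epsilon}\mcJ_L(w,\eta|c+\epsilon h)\Big|_{\epsilon=0} = \int_0^L\Big( e^{\delta(L-s)}u_1'(c_s) - \lambda\, u_2'(w_L)\, e^{\int_s^L r_u\dd u}\Big)h_s\,\dd s ,
\end{equation*}
and by concavity a path $c^*$ is optimal iff this vanishes for all $h$, i.e. iff $e^{\delta(L-s)}u_1'(c^*_s)=\lambda u_2'(w_L)e^{\int_s^L r_u\dd u}$ for every $s\in[0,L]$; inverting $u_1'$ (legitimate by the concavity and regularity of Assumption~\ref{ass:utility_reg_intro}) gives exactly \eqref{det_c_wT}. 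Equivalently one may invoke the maximum principle with Hamiltonian $H(t,w,c,p)=e^{\delta(L-t)}u_1(c)+p(r_tw-c+\eta_t)$: the adjoint equation $\dot p_t=-r_tp_t$, $p_L=\lambda u_2'(w_L)$, has solution $p_t=\lambda u_2'(w_L)e^{\int_t^L r_u\dd u}$, and $\partial_cH=0$ reproduces \eqref{det_c_wT}.

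The one genuinely non-trivial point — and the main obstacle — is that \eqref{det_c_wT} is \emph{implicit}, since its right-hand side contains the optimal terminal wealth $w_L$, which itself depends on the whole path $c^*$. To promote the first–order condition to a bona fide characterization I would substitute \eqref{det_c_wT} into the boxed expression for $w_L$, obtaining the scalar fixed–point equation $w_L=\Phi(w_L)$ with
\begin{equation*}
 \Phi(x) \coloneqq w_0 e^{\int_0^L r_s\dd s} + \int_0^L e^{\int_s^L r_u\dd u}\Big(\eta_s - (u_1')^{-1}\big(\lambda\, e^{\int_s^L(r_u-\delta)\dd u}u_2'(x)\big)\Big)\dd s ,
\end{equation*}
and argue uniqueness of the root. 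This follows from Assumption~\ref{ass:utility_reg_intro}: $(u_1')^{-1}$ and $u_2'$ are Lipschitz with constant $<\kappa$, so $\Phi$ is Lipschitz with constant $\lesssim \kappa^2\lambda L\, e^{2\|r\|_{\msL^\infty_L}L}$, hence a contraction when $L$ is small; alternatively, without a smallness restriction, $\Phi$ is nonincreasing in $x$ (as $u_2$ is concave and $u_1'$ decreasing), so $x\mapsto x-\Phi(x)$ is continuous and strictly increasing with a unique zero. Either way $c^*$ is well defined. (One could also read the statement off Theorem~\ref{thm:life_cycle_optimal_rep} by setting $\sigma\equiv0$, whereupon the backward equation degenerates to an ODE and the conditional expectation in the general formula collapses to $u_2'(w_L)$.) The remaining verifications — that $c^*$ is admissible, in particular continuous and, for CRRA-type $u_1$, valued in $(0,\infty)$ where $u_1'$ is finite (which holds since $(u_1')^{-1}>0$), and that the supremum is attained — are then routine given the boundedness furnished by Assumption~\ref{ass:utility_reg_intro}.
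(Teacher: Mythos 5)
Your argument is correct, and at its core it is the same route the paper takes: the paper's proof simply observes that the problem reduces to the deterministic payoff \eqref{eq:max deterministic} under the budget constraint \eqref{det_bc} and cites the classical deterministic Pontryagin maximum principle, which is exactly your Hamiltonian/adjoint-equation paragraph. What you add beyond the paper is (i) a self-contained variational justification — expressing $w_L$ as an affine functional of $c$, computing the G\^ateaux derivative, and using concavity to make the first-order condition both necessary and sufficient — and (ii) an explicit treatment of the fact that \eqref{det_c_wT} is implicit in $w_L$, which you resolve via the fixed-point equation $w_L=\Phi(w_L)$ and the monotonicity of $\Phi$ (the contraction alternative needs Assumption~\ref{ass:utility_reg_intro}, but the monotonicity argument does not, which is preferable since the proposition is also meant to cover CRRA utilities as in Example~\ref{ex:deterministic_crra}). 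The paper leaves point (ii) implicit in the proposition and only resolves $w_L$ explicitly in the CRRA example, so your write-up is a more complete, if less economical, version of the same proof; it also correctly notes the consistency with the $\sigma\equiv 0$ specialisation of Theorem~\ref{thm:life_cycle_optimal_rep}.
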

\begin{proof}

Since in this case, the problem reduces to maximize the payoff 
   \begin{equation}\label{eq:max deterministic}
   \int_0^L e^{-\delta s} u_1(c_s) + \lambda e^{-\delta L}u_2(w_L)\dd s, 
\end{equation}
subject to the deterministic budget constraint for wealth $w$
\begin{equation}\label{det_bc}
    \dot{w}_t = r_t w_t + \eta_t - c_t,\quad w_0\in \RR,
\end{equation}
the result is classical and found using the deterministic Pontryagin maximum principle, see for example \cite[Sec. 10]{sydsaeter2008further}.  
\end{proof}
To illustrate this proposition, we provide an example where the utility functions are chosen to be CRRA and $\eta_t = \eta_0\exp(\mu t)$ for some parameter $\mu>0$. 
\begin{ex}\label{ex:deterministic_crra}
    Let now $u_1(x)=u_2(x)=\frac{x^{1-\gamma}}{1-\gamma}$. Then
\begin{equation}\label{det_c_forms}
     c_t^* =  \lambda^{-\frac{1}{\gamma}} e^{-\frac{1}{\gamma} \int_t^L (r_s-\delta)ds }  w^*_L. 
\end{equation}
To find the corresponding optimal dynamics for the wealth process $\{w^*_t\}_{t\in [0,L]}$, which due to the constraint  \eqref{det_bc}, and employing the variation of constant technique,  must satisfy
\begin{equation*}
    w_t^* = e^{\int_0^t r_s \dd s }w_0 +\int_0^t e^{\int_s^t r_u \dd u } (\eta_0\exp(\mu s)-w^*_L\lambda^{-\frac{1}{\gamma}} e^{-\frac{1}{\gamma} \int_s^L (r_u-\delta)\dd u })\dd s.  
\end{equation*}
Define now $\Xi_t = \int_0^t e^{\int_s^t r_u \dd u } \eta_0\exp(\mu s)\dd s$, and $\Theta_t =\int_0^t e^{\int_s^t r_u \dd u }\lambda^{-\frac{1}{\gamma}} e^{-\frac{1}{\gamma} \int_s^L (r_u-\delta)\dd u }\dd s$. 
By elementary algebraic computations,
evaluating $t=L$ and rearranging we see that
\[
    w_L^*(1 + \Theta_L) = e^{\int_0^Lr_s \dd s }w_0 +\Xi_L.  
\]
It follows that $w_L^*$ is given by 
\begin{equation*}
    w_L^* = \frac{e^{\int_0^Lr_s \dd s }w_0+\Xi_L}{1+\Theta_L}.  
\end{equation*}
Inserting $w_L^*$ into the dynamics of $c^*$ yields a closed form consumption policy.
It's then possible to use the closed form optimal consumption policy to derive the path of optimal wealth.

For illustration, we compute the closed form solutions numerically and plot them in Figure \ref{fig:analytic-solution}.
\begin{figure}[!htbp]
  \scriptsize
  \centering
  \begin{subfigure}[t]{0.33\textwidth}
    \centering
    \includegraphics[width=\linewidth]{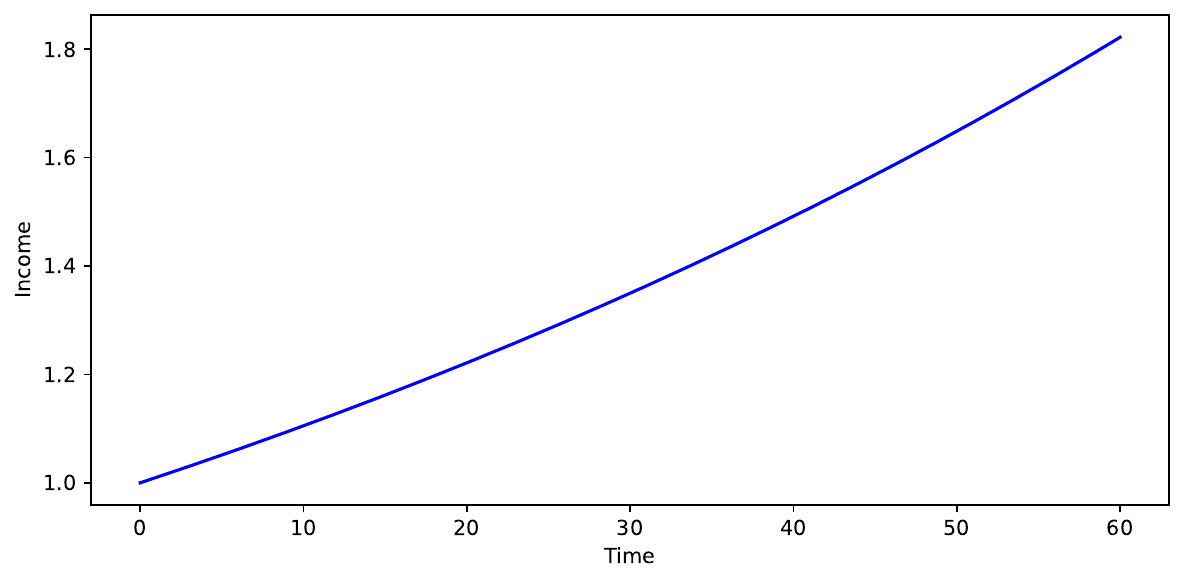}
    \caption{Income trajectory.}
    \label{fig:analytic-solution-income}
  \end{subfigure}
  \hfill
  \begin{subfigure}[t]{0.33\textwidth}
    \centering
    \includegraphics[width=\linewidth]{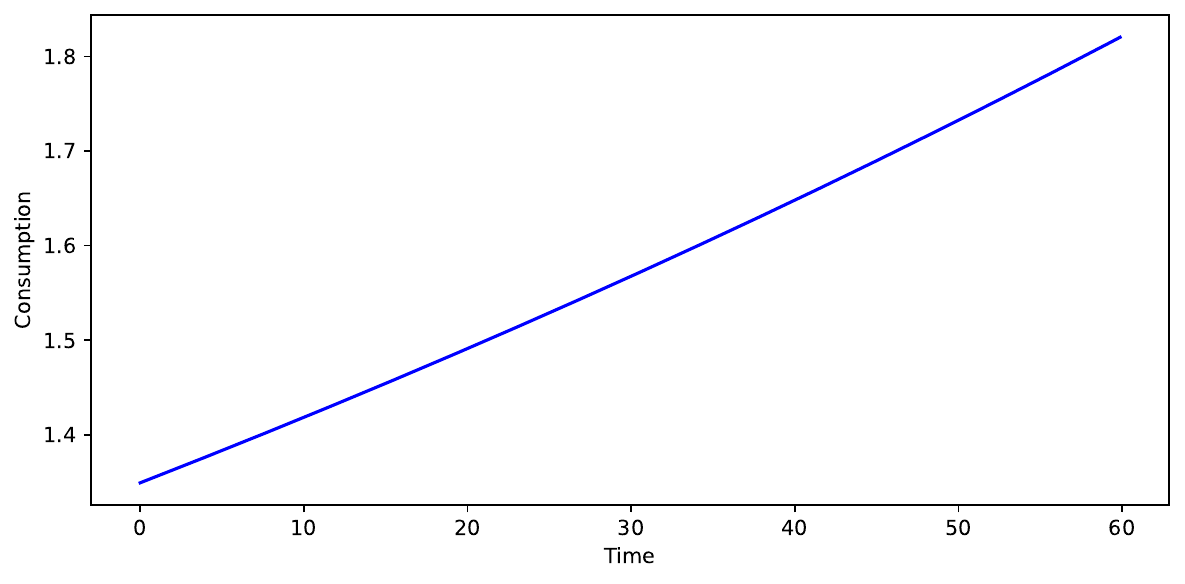}
    \caption{Consumption trajectory.}
    \label{fig:analytic-solution-consumption}
  \end{subfigure}
  \hfill
  \begin{subfigure}[t]{0.33\textwidth}
    \centering
    \includegraphics[width=\linewidth]{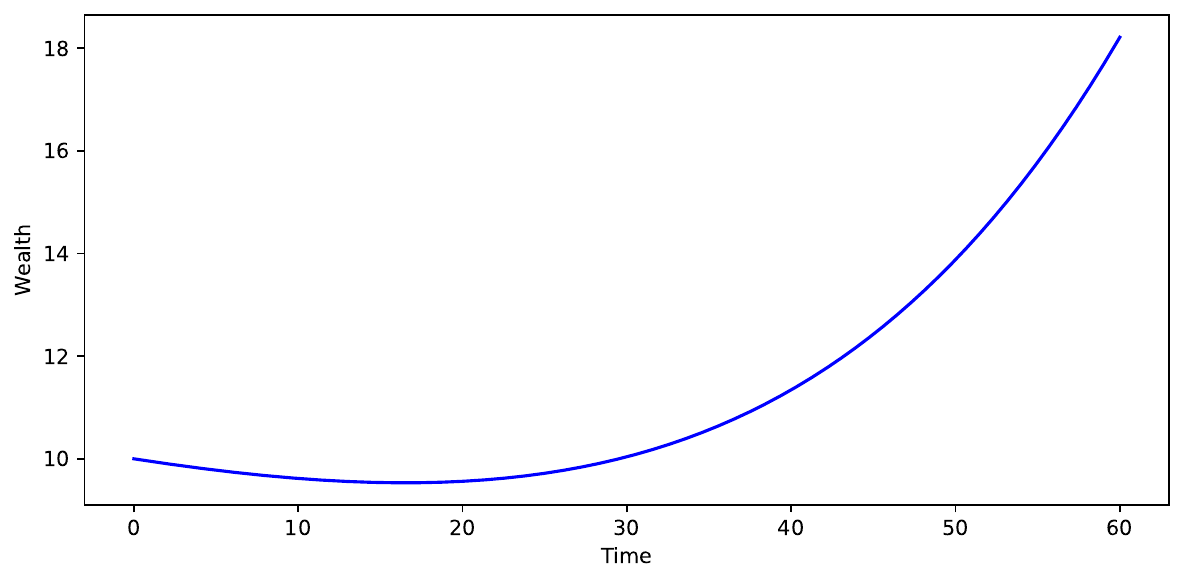}
    \caption{Wealth trajectory}
    \label{fig:analytic-solution-wealth}
  \end{subfigure}
  \caption{Plots of the analytic solution given in Example~\ref{ex:deterministic_crra} with the following parameters: $\delta = 0.02,\, r= 0.03,\, \gamma_1 = 2,\,\gamma_2 = 2, \, \mu = 0.01, \lambda = 100,\,  L = 60,\, \eta_0 = 1,\, w_0 = 10$.} 
  \label{fig:analytic-solution}
\end{figure}
\end{ex}

\begin{rem}
    In the economics literature, the life-cycle model of consumption begins with \cite{modigliani1954}. Subsequently \cite{blinder1975} extended the model to incorporate utility for terminal wealth. Bequest motive generates a nonlinear relationship between current consumption and terminal wealth. The only case where the marginal propensity to consume is linear in total wealth appears when the risk aversion parameter is equal to the elasticity parameter of bequest as in \ref{ex:deterministic_crra}. In the next section we will add stochastic noise to income, and we will observe that the optimal consumption policy is similar in spirit as the deterministic setting, but now involving conditional expectations of future marginal utility. 
\end{rem}

\subsection{The Stochastic Maximum Principle and BSDEs}

We now move to the stochastic life-cycle problem \eqref{eq:life_cycle_problem}. One possible approach to find the optimal control $c^*$ is through the dynamic programming principle, leading to an Hamilton--Jacobi--Bellman (HJB) equation. An alternative is to use the stochastic maximum principle, by analogy with the deterministic case discussed above.  In this article we choose the latter and illustrate how this approach provides insightful representation of the economic system of heterogeneous agents of interest. Furthermore, since the noise in \eqref{eq:life_cycle_problem} occurs in the income variable $\eta$ and not in the controlled wealth variable $w$, the associated HJB equation is hypoelliptic rather than parabolic, making it challenging to analyse. See for example \cite{ambrose_21_existence}. In contrast, the stochastic maximum principle approach allows us to employ fixed point arguments at the price of making some regularity assumptions on the utility functions. We will subsequently show that these regularity assumptions are satisfied by suitable approximations to the commonly used CRRA functions, see Example~\ref{ex:quad_crra_approx}.

The following theorem is the main technical result of our paper, which shows that solving the stochastic optimal control problem \eqref{eq:life_cycle_problem} is equivalent to solving a particular non-linear, path dependent SDE. The result relies on the stochastic maximum principle and the explicit solution formula for linear BSDE (Proposition~\ref{prop:bsde_linear_solution}).  


\begin{thm}\label{thm:life_cycle_optimal_rep}
    Let $(\Omega,\mcF,\mbP)$ be a probability space, $(w_0,\eta_0) \in \msL^0(\Omega;\mbR \times \mbR_+)$ be a given random variable and $\{\beta_t\}_{t\in [0,L]}$ be a standard $\mbP$-Brownian motion  with natural filtration $\{\mcF_t\}_{t\in[0,L]}$. Without changing notation we augment $\{\mcF_t\}_{t\in[0,L]}$ so that $(w_0,\eta_0)$ is $\mcF_0$-measurable. Then, given measurable coefficients $\mu:\Omega\times [0,L]\times \mbR\to \mbR$, $\sigma :\Omega\times [0,L]\times \mbR\to \mbR$ and concave utility functions $u_1,\,u_2$, any solution to the system
    \begin{equation}\label{eq:life_cycle_explicit}
    \begin{aligned}
    &\begin{aligned}
          &\dd w_t = (r_tw_t +\eta_t -c_t(w))\dd t,\qquad w\tzero = w_0,\\
&\dd \eta_t = \mu_t(\eta_t)\dd t + \sigma_t(\eta_t)\dd \beta_t,\qquad \eta\tzero = \eta_0,
    \end{aligned}
        \qquad \text{for all }\,t\in [0,L],\\
        &c^*_t(w)= (u'_1)^{-1} \bigg( \lambda \exp\bigg( \int_t^L (r_s-\delta) \dd s \bigg)  \EE \left[ u'_2(w_L) |\cF_t\right]\bigg)
    \end{aligned}
    \end{equation}

     is a solution  to the optimal control problem \eqref{eq:life_cycle_problem}
    %
%
      %
%
    in the sense of the stochastic maximum principle (see e.g. \cite[Thm.~6.4.6 and 6.4.7]{pham_09_continuous}). 
\end{thm}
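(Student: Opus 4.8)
The plan is to verify, for the candidate produced by \eqref{eq:life_cycle_explicit}, the hypotheses of the stochastic maximum principle in its sufficient-condition form, \cite[Thm.~6.4.6 and 6.4.7]{pham_09_continuous}. The first observation is that the income process $\eta$ is autonomous and unaffected by the control, and enters both the wealth dynamics and the payoff \eqref{eq:life_cycle_problem} only through the additive term $\eta_t$ in the drift of $w$; hence one may regard $\eta$ as an exogenous $\{\cF_t\}$-adapted process and view $w$ as the sole controlled state, with (generalised) Hamiltonian
\begin{equation*}
    H(t,w,c,p) = p\,(r_t w - c + \eta_t) + e^{\delta(L-t)}u_1(c),
\end{equation*}
no $\dd\beta$-term appearing since the $w$-dynamics carries no diffusion coefficient. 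The associated adjoint process solves the linear BSDE $\dd p_t = -r_t p_t\,\dd t + q_t\,\dd\beta_t$ with terminal datum $p_L = \lambda u_2'(w_L)$.

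First I would solve this BSDE. Since $r$ is deterministic, Proposition~\ref{prop:bsde_linear_solution} gives
\begin{equation*}
    p_t = \lambda\exp\!\left(\int_t^L r_s\,\dd s\right)\EE\!\left[u_2'(w_L)\mid\cF_t\right],\qquad t\in[0,L],
\end{equation*}
with $q$ read off from the martingale representation of $s\mapsto\EE[u_2'(w_L)\mid\cF_s]$. Next I would maximise $c\mapsto H(t,w_t,c,p_t)$: concavity of $u_1$ makes this map concave, so its maximiser is pinned down by the first-order condition $-p_t + e^{\delta(L-t)}u_1'(c) = 0$, i.e. $c^*_t = (u_1')^{-1}\!\left(e^{-\delta(L-t)}p_t\right)$. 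Substituting the formula for $p_t$ and using $\delta(L-t)=\int_t^L\delta\,\dd s$ to merge the exponents recovers exactly the feedback law in \eqref{eq:life_cycle_explicit}. Consequently, if $(w,\eta)$ is a solution of \eqref{eq:life_cycle_explicit}, then defining $p_t$ by the displayed formula (and $q_t$ accordingly) produces precisely the adjoint process attached to $w$ controlled by $c^*$.

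It then remains to check the structural conditions of the sufficient principle: $w\mapsto\lambda u_2(w)$ is concave since $\lambda\geq 0$ and $u_2$ is concave, and $(w,c)\mapsto H(t,w,c,p_t)$ is the sum of a term affine in $w$ and a concave function of $c$, hence jointly concave; together with the maximality of $c^*_t$ just established, \cite[Thm.~6.4.6 and 6.4.7]{pham_09_continuous} yields optimality of $c^*$ for \eqref{eq:life_cycle_problem}, with optimally controlled wealth $w$.

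The hard part, such as it is, is not any single computation but justifying the reduction of the state to $w$ alone: if one keeps $(w,\eta)$ as the state, the full Hamiltonian also carries the $\eta$-adjoint together with the terms $\tilde p_t\mu_t(\eta)+\tilde q_t\sigma_t(\eta)$, which need not be concave in $\eta$ under the mere measurability assumed of $\mu,\sigma$. The point to make rigorous is that neither the control nor the running and terminal costs couple to this nonlinearity, so the maximisation over $c$ and every concavity check involve only the $w$-component of the adjoint system, and the $\eta$-component (itself the solution of a linear BSDE driven by $p$) never enters the optimality argument; this decoupling is exactly the feature that makes the maximum principle, rather than the associated degenerate, hypoelliptic HJB equation, the workable route here. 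A minor loose end is the integrability needed to invoke the martingale representation and \cite[Thm.~6.4.6]{pham_09_continuous}, but this is subsumed in the notion of a solution to \eqref{eq:life_cycle_explicit}.
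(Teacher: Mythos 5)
Your argument is correct and reaches the same formula by the same basic mechanism (sufficient stochastic maximum principle, explicit solution of a linear adjoint BSDE via Proposition~\ref{prop:bsde_linear_solution}, first-order condition in $c$), but your treatment of the income process is genuinely different from the paper's. The paper keeps the pair $(w,\eta)$ as a two-dimensional (degenerate) controlled state: it introduces an auxiliary Brownian motion $\bar\beta$, writes the combined dynamics as a $2$-dimensional diffusion, forms the full Hamiltonian $\mcH_t((w,\eta),y,z;c)=(r_tw-c+\eta)y^1+\mu_t(\eta)y^2+(\sigma')^2(\eta)z^2+e^{\delta(L-t)}u_1(c)$ together with a two-dimensional adjoint BSDE, and then exploits the fact that the $y^1$-equation decouples from $y^2$ and is explicitly solvable. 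You instead observe that $\eta$ is uncontrolled and enters only additively in the drift of $w$, so you reduce to a one-dimensional controlled state with $\eta_t$ as a random, adapted coefficient; the adjoint is then directly the scalar BSDE for $p$, and the concavity checks involve only $(w,c)$. This buys something real: the sufficiency theorem requires concavity of the Hamiltonian jointly in state and control, and in the paper's two-dimensional formulation that would involve the terms $\mu_t(\eta)y^2+(\sigma')^2(\eta)z^2$, whose concavity in $\eta$ is not guaranteed under the mere measurability assumed of $\mu,\sigma$ (the paper checks concavity only in $c$ and leaves this point implicit); your reduction sidesteps it, which is exactly the decoupling you flag as the point needing rigour. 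The price is that \cite[Thm.~6.4.6]{pham_09_continuous} is stated for deterministic coefficient functions of a Markovian state, so treating $\eta_t$ as a random coefficient takes you slightly outside its literal scope and you must either invoke a version of the sufficiency principle for progressively measurable random coefficients (the proof carries over verbatim since the duality/concavity argument is pathwise in the coefficients) or revert to the paper's two-dimensional formulation and address the $\eta$-concavity there; apart from this framing issue and the integrability caveats you already acknowledge, your proof is sound.
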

\begin{proof} To show that the existence of solutions to \eqref{eq:life_cycle_explicit} implies a solution to the optimal control problem in \eqref{eq:life_cycle_problem}, we  follow the stochastic maximum principle, see for example \cite[Thm.~6.4.6]{pham_09_continuous}. 

Fix $\{\bar{\beta}_t\}_{t\in [0,L]}$ a second, standard Brownian motion, independent of $\beta$ and adapted to $\{\cF_t\}_{t\in [0,L]}$. 
We then re-write the dynamics of \eqref{eq:life_cycle_explicit} as follows,
\begin{equation}\label{eq:life_cycle_wealth_income_combined}
    \begin{aligned}
         \dd \begin{pmatrix}
        w_t 
        \\
        \eta_t
    \end{pmatrix}&= \begin{pmatrix}
            r_tw_t -(u'_1)^{-1}(y^1_t)+\eta_t,  
            \\
            \mu_t(\eta_t)\
            \end{pmatrix}\dd t + \begin{pmatrix}
                0 &0 
                \\
                0 & \sigma_t(\eta_t)
            \end{pmatrix}  \begin{pmatrix}
        \dd \bar{\beta}_t\\
        \dd \beta_t
    \end{pmatrix}. 
    \end{aligned}
\end{equation}
Note that while \eqref{eq:life_cycle_wealth_income_combined} is a degenerate two dimensional diffusion, the stochastic maximum principle still applies. We define $\mcH:[0,L]\times \RR^2\times \RR^2 \times \RR^2 \times \RR\rightarrow \RR$ the Hamiltonian associated with the optimization \eqref{eq:life_cycle_problem} as 
\begin{equation}\label{eq:hamilton}\mcH_t((w,\eta),y,z;c) = (r_t w -c +\eta) y^1+ \mu_t(\eta)y^2+(\sigma')^2(\eta)z^2 + e^{\delta(L-t)} u_1(c).
\end{equation}
It is clear that $c\mapsto \mcH(\,\cdot\,;c)$ is concave due to the assumption that $u_1$ is a concave function.

To apply the stochastic maximum principle, we 
consider the BSDE dual to \eqref{eq:life_cycle_wealth_income_combined}
\begin{equation*}
\begin{aligned}
    - \dd \begin{pmatrix}
        y^1_t \\
        y^2_t
    \end{pmatrix} &= \begin{pmatrix}
        r_t y^1_t \\
        y^1_t + \mu'_t(\eta_t) y^2_t + \sigma'_t(\eta_t) z_t
    \end{pmatrix} \dd t -\begin{pmatrix}
        z^1_t & 0\\
        0 &z^2_t
    \end{pmatrix} \begin{pmatrix}
        \dd \bar{\beta}_t\\
        \dd \beta_t
    \end{pmatrix},\quad
    \begin{pmatrix}
        y^1_L\\
        y^2_L
    \end{pmatrix} &= \begin{pmatrix}
        \lambda u_2'(w_L)\\
        0
    \end{pmatrix},
\end{aligned}
\end{equation*}
and write $y=(y^1,y^2)$ and $z=(z^1,z^2)$.
The solution pair $(y,z)$ to this linear BSDE is adapted to the filtration $\{\cF_t\}_{t\in [0,L]}$. Moreover, $c \mapsto \mcH(\,\cdot\,;c)$ reaches its maximum when the first order condition is satisfied,  namely when 
\begin{equation*} 
y^1_t =e^{\delta(L-t)} u_1'(c_t). 
\end{equation*}
It follows that if we define 
$c^*_t = (u'_1)^{-1}(e^{-\delta L-t)}y_t^1)$, 
then we have that 
\begin{equation*}
    \mcH_t((w_t,\eta_t),y_t,z_t;c^*_t) = \max_{c\in \cA}\mcH_t((w_t,\eta_t),y_t,z_t;c). 
\end{equation*}
where we recall that $\cA$ denotes the admissible class of controls $c $;  square integrable continuous  stochastic process adapted to the filtration $\{\cF_t\}_{t\in [0,L]}$. 
The linear BSDE describing $y^1$ can be solved independently of $y^2$, and the solution is given explicitly as
\begin{equation*}
    y_t^1=\lambda \mbE[e^{\int_t^L r_s\dd s} u'_2(w_L)|\cF_t],  
\end{equation*}
as shown in the Appendix in \eqref{eq:explicit BSDE}. Thus inserting this representation of the solution into the expression for the optimal consumption policy yields the third equation in \eqref{eq:life_cycle_explicit}.
 Since we have assumed that this system of equations  has a unique solution, then 
by  \cite[Thm.~6.4.6]{pham_09_continuous} we conclude that $t\mapsto c^*_t$ is the optimal consumption policy that solves \eqref{eq:life_cycle_problem}, concluding the proof.
\end{proof}

\begin{rem}
   Note that if we set the noise to zero in \eqref{eq:life_cycle_explicit} by imposing $\sigma \equiv 0$, together with assuming initial values of wealth and income to be deterministic, then the optimal consumption in Theorem \ref{thm:life_cycle_optimal_rep} coincides with the deterministic optimal consumption policy in Proposition \ref{prop:determ}. The stochastic system in \eqref{eq:life_cycle_explicit} therefore provides a direct generalization of the deterministic life-cycle model, and gives a similar interpretation of the optimal consumption policy in the stochastic setting.
\end{rem}
\begin{rem}
    It is worth noting that the optimal consumption only requires that the instantaneous income process is an It\^o process, which constitutes a broad class of stochastic processes. One would expect that this could be extended even further to L\'evy processes by using results from \cite{oksendal2007applied}, but we do not consider such an extension here  to rather focus on the subsequent equilibrium problem and overlapping generations case.  
\end{rem}
\begin{ex}
Consider utility functions of a CRRA type, i.e. of the form $u_1(x)=u_2(x)=\frac{x^{1-\gamma}
}{1-\gamma}$ for $\gamma >0$. In this case, using that the CRRA function is homogeneous, the optimal consumption policy can be represented as 
\begin{equation}\label{eq:consumption_CRRA}
  c^*_t = f_{\lambda,\gamma}(t,L)\EE[w_L^{-\gamma}|\cF_t]^{-\frac{1}{\gamma}},\quad where \quad f(t,L)=\lambda^{-\frac{1}{\gamma}}\exp\left(-\frac{1}{\gamma}\int_t^L (r_s-\delta) \dd s\right).  
\end{equation}

Of course, with this choice of utility functions we get a singular function inside the conditional expectation, and one will need to show that the resulting consumption policy is indeed finite and non-negative. 
Note in particular that  this does resemble the optimal consumption derived in the deterministic setting in \ref{prop:determ}, and in particular \eqref{det_c_wT}. However, 
note that even though we chose both utility functions with the same exponent $\gamma>0$ we do not find a linear dependence of current consumption on terminal wealth. This is due to the conditional expectation which does not commute with the non-linear functions $x\mapsto x^{-\gamma}$ and $y\mapsto y^{-\nicefrac{1}{\gamma}}$. The best we can obtain in this case is a linear lower bound on consumption using Jensen's inequality.
\end{ex}

\begin{rem}
\eqref{eq:life_cycle_explicit} and \eqref{eq:consumption_CRRA} both resembles the conventional Euler equation found in Economics, see e.g.  \cite{LjungqvistSargent2004}. 
An Euler equation \footnote{Note that Euler equations exist in other fields. We refer here explicitly to the terms use in Economics, see e.g. \cite{LjungqvistSargent2004}} is the economic term for optimal inter-temporal choice of consumption, and states that the optimal relationship between consumption over time has to equate the marginal rate of substitution with the relative price.
A marginal rate of substitution is the ratio of two marginal utilities adjusted by the rate of time preference.
To see this, note that the following three fundamental relations hold, for $\Delta >0$
\begin{align*}
    u_1'(c_t^*) &= \exp\left(\int_t^L (r_s-\delta)\dd s\right) \left(\lambda \EE\left[ u'_2(w_L)|\cF_t\right]\right)\\
    \EE[u_1'(c_{t+\Delta}^*) | \cF_t] &= \exp\left(\int_{t+\Delta}^L (r_s-\delta)\dd s\right) \left(\lambda \EE\left[ u'_2(w_L)|\cF_t\right]\right)\\
    \frac{u_1'(c_t^*)}{\EE[u_1'(c_{t+\Delta}^*) | \cF_t]} &= \exp\left(\int_{t}^{t+\Delta} (r_s-\delta)\dd s\right)\\
\end{align*}
This recovers the conventional economic logic that an optimal path of consumption is such that the marginal utility of consumption today equals the expected marginal utility of saving for consumption whether it's for tomorrow, over-morrow or at the end of time.
See \cite{acemoglu2009} for a thorough discussion on Euler equations in dynamic economic models.
\end{rem}


\subsection{Markovian Representation of consumption via PDE}
The optimal consumption derived in the previous section is given as a conditional expectation with respect to the filtration generated by the driving Brownian motion of the income process $\eta$, which is found by employing the stochastic maximum principle.  For computational purposes it is therefore important to determine whether this process is Markovian with respect to the wealth process or not. In fact, when trying solve the optimization problem in \eqref{eq:life_cycle_problem} using dynamic programming found through the Hamilton-Jacobi-Bellmann partial differential equation instead of our proposed FBSDE approach, one typically gets such Markovian property directly. We will elaborate a bit on this, only considering now the BSDE part. To this end, the value function $v(t,x)$ in the optimal consumption problem for the individual is found through the PDE 
\begin{align}\label{HJB Equation}
\partial_t v_t(w,\eta) + \frac{1}{2} \sigma^2(\eta) \partial_{\eta} v_t(w,\eta) + \max_{c}\cH(w,\eta,\partial_w v,\partial_\eta v;c )  = 0, \quad v_T(w,\eta) = \lambda u_2(w)
\end{align}
where $H$ is the Hamiltonian defined in \eqref{eq:hamilton}. 
Since PDE based solution methods have so far been more common in the application of continuous time models to economics, \cite{achdou_francisco_lasry_lions_moll_14_pde,gabaix_lasry_lions_moll_16_dynamics, achdou_han_lasry_lions_moll_21_income}, we include here a well-known connection between the two methodologies. The following proposition is of verification style, stating that under sufficient regularity conditions the  optimal consumption is indeed Markovian with respect to wealth. 

\begin{prop}
  Suppose there exists a function a unique classical solution $v\in C^{1,2}([0,L)\times \RR^d)\cap C^0([0,L]\times \RR^d)$ to the HJB equation \eqref{HJB Equation}, which enjoys linear growth property and polynomial growth of its gradient $\nabla_{w,\eta} v$.  
  %
%
  Then the optimal consumption policy $\{c_t^*\}_{t\in [0,L]}$ described in \eqref{eq:life_cycle_explicit} is equal to 
\[   c_t^*(w_t,\eta_t)= (u'_1)^{-1}(\partial_w v(t,w_t,\eta_t)).    \] 
with $\{(w_t,\eta_t)\}_{t\in [0,L]}$ also as in  \eqref{eq:life_cycle_explicit}.
%
%
In particular, we see that $c^*_t$ is Markovian with respect to the pair of processes $(w,\eta)$.
\end{prop}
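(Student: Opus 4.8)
The plan is to run the classical stochastic verification argument and then match its optimal feedback with the control produced by the stochastic maximum principle. First I would take the trajectory $(w,\eta)$ together with the optimal consumption $c^*$ of \eqref{eq:life_cycle_explicit} as given (its existence is part of the hypotheses of Theorem~\ref{thm:life_cycle_optimal_rep}), and apply It\^o's formula to $s\mapsto v(s,w^c_s,\eta_s)$ on $[0,L]$ for an \emph{arbitrary} admissible $c\in\mcA$ with state process $(w^c,\eta)$. Using that $v$ solves \eqref{HJB Equation} with terminal datum $\lambda u_2(\cdot)$, that $\mcH_s(\,\cdot\,;c)\le\max_{c}\mcH_s(\,\cdot\,;c)$ pointwise, and that the linear growth of $v$ and polynomial growth of $\nabla v$ make the stochastic integral a true martingale and all expectations finite, the standard computation yields
\begin{equation*}
  v(0,w_0,\eta_0)\;\ge\;\EE\Big[\int_0^L e^{\delta(L-s)}u_1(c_s)\,\dd s+\lambda u_2(w^c_L)\Big]=\mcJ_L(w_0,\eta_0\,|\,c),
\end{equation*}
with equality holding for a given $c$ if and only if $c_s$ realises the pointwise maximum of $c\mapsto\mcH_s((w^c_s,\eta_s),\nabla v(s,w^c_s,\eta_s),\,\cdot\,;c)$ for $\dd s\otimes\dd\PP$–a.e.\ $(s,\omega)$. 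By concavity of $u_1$ and invertibility of $u_1'$ that maximiser is exactly the feedback obtained by applying $(u_1')^{-1}$ to $\partial_w v(s,w^c_s,\eta_s)$ (with the $e^{\delta(L-s)}$ normalisation coming from \eqref{eq:hamilton}).

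Next I would upgrade this inequality to the stated identity by invoking the accompanying part of the verification theorem (see e.g.\ \cite{pham_09_continuous}): under precisely the regularity and growth assumptions imposed on $v$, one has $v\equiv v_L$, the value function of \eqref{eq:value_function_1}. The admissibility check needed here is mild, since the closed-loop state equation is a pathwise ODE in $w$ (no noise enters $w$) with continuous coefficients, so it admits an admissible solution; Lipschitz continuity of $(u_1')^{-1}$ from Assumption~\ref{ass:utility_reg_intro} and continuity of $\partial_w v$ are what guarantee this. Now apply the equality criterion from the first step to $c=c^*$: by Theorem~\ref{thm:life_cycle_optimal_rep} the FBSDE control $c^*$ attains $\mcJ_L(w_0,\eta_0|c^*)=v_L(0,w_0,\eta_0)=v(0,w_0,\eta_0)$, so equality must hold for $c^*$, which forces $c^*_t=(u_1')^{-1}\big(\partial_w v(t,w_t,\eta_t)\big)$ along the trajectory $(w,\eta)$ of \eqref{eq:life_cycle_explicit}. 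Since the right-hand side is a deterministic function of $(t,w_t,\eta_t)$, this is exactly the assertion that $c^*$ is Markovian with respect to $(w,\eta)$.

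Alternatively, and more transparently, one can identify the adjoint process of the maximum principle with the gradient of $v$ directly: differentiating \eqref{HJB Equation} in $w$ and in $\eta$ and applying It\^o to $\big(\partial_w v(t,w_t,\eta_t),\partial_\eta v(t,w_t,\eta_t)\big)$ shows this pair solves the linear adjoint BSDE appearing in the proof of Theorem~\ref{thm:life_cycle_optimal_rep}; by uniqueness of that BSDE one gets $y^1_t=\partial_w v(t,w_t,\eta_t)$, hence the same conclusion through $c^*_t=(u_1')^{-1}(e^{-\delta(L-t)}y^1_t)$. I expect the only real obstacle to be regularity: this second route nominally requires $v\in C^{1,3}$ to It\^o-differentiate $\nabla v$ — the familiar gap in verification-type statements, handled by mollifying $v$ and passing to the limit using the polynomial-growth bound on $\nabla v$ — while the first route avoids it entirely but relies on the cited verification theorem applying verbatim, i.e.\ on the (routine) admissibility of the closed-loop feedback and on keeping track of the discount factor from \eqref{eq:hamilton}.
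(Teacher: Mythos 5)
Your proposal is correct and takes essentially the same approach as the paper: the paper's proof consists of citing \cite[Prop.~6.3.2]{pham_09_continuous} (the HJB verification argument you carry out in your first step) together with \cite[Thm.~6.4.7]{pham_09_continuous} (the identification $y^1_t=\partial_w v(t,w_t,\eta_t)$ of the adjoint process, your alternative route). You have simply spelled out in detail the two cited results, including the correct bookkeeping of the $e^{\delta(L-t)}$ normalisation from \eqref{eq:hamilton}.
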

\begin{proof}
  This statement is combination of the two statements  \cite[Prop. 6.3.2]{pham_09_continuous} and \cite[Thm.~6.4.7]{pham_09_continuous}, slightly rewritten to suit the notation of this article. 
\end{proof}

\begin{rem}
 A relatively recent article, \cite{ambrose_21_existence} obtained local in time existence and uniqueness of solutions to the HJB equation describing solution to the optimal control problem in the life cycle problem \eqref{eq:life_cycle_problem} and a relaxed general equilibrium. While the utility functions treated therein are the true CRRA functions, a number of other modifications to the problem are imposed. For example \cite{ambrose_21_existence} considers solutions with truncated densities of income and wealth and relaxes the general equilibrium condition, see \cite[Sec.~3]{ambrose_21_existence}. It seems challenging to obtain sufficient regularity of the PDE solutions studied by \cite{ambrose_21_existence} such that one can guarantee an equivalence to the FBSDE approach.

\end{rem}

\subsection{Natural Borrowing Limits and Wealth Asymptotic}
A finite time horizon with both an incentive to consume and an incentive to bequeath encodes a strong disciplinary effect on the individuals. Since, in the idealized model, individuals receives an infinitely negative payoff for ending up with negative wealth at time $L$, the optimal savings/consumption policy must be such that the the terminal wealth at time $L$ is non-negative. 
This creates a natural borrowing limit for the individuals; a debt limit such that even borrowing up to this limit, individuals expect to  repay their debt before time $L$. A distinct advantage of the FSBDE approach to describing the life-cycle dynamics is that we can express the natural borrowing limit as an analytic function, which we will see is a natural generalisation of the deterministic counterpart.  

In the economic literature \cite{aiyagari_94_uninsured} introduced the concept of natural borrowing limit in an infinite horizon economy with finite income states. In a stationary equilibrium with interest rate $r>0$, the natural borrowing limit is defined as $-\eta^{\frac{1}{r}}$ where $\eta^1$ is the lowest income.
In a continuous-time  model with a discrete two-state income process \cite{achdou_francisco_lasry_lions_moll_14_pde}, in which $\eta_t\in \{ \eta^1,\eta^2\}$ for all $t\in [0,L]$, the natural borrowing limit at any point in time  can be proven to be the net present value of all future income - in the {\em low} income state. That is, suppose $\eta^1< \eta^2$, then the borrowing limit (equivalently lower bound of wealth $\underline{w}$) at time $t\in [0,L]$ will be 
\footnote{When $r$ is constant we get $\underline{w} = -{\frac{\eta^1}{r}}\left(e^{-rt}-e^{-rL}\right)$. Now let $t\to 0$ and $L\to \infty$ to obtain Aiyagari's natural borrowing limit.}.
\[\underline{w}=-\eta^1 \int_t^L  e^{-\int_s^L r_u \dd u }\dd s \]
The reason for this borrowing limit, even for people in the high income state,  is that at any point in time there is a greater than $0$ probability that you will jump to, and remain in, the lowest income state for the rest of you life (until time $L$). Therefore, if you borrow more than relative to the lowest possible income, there will be a positive probability that you end up with $-\infty$ in your value function which is sub-optimal.

The same thought experiment can be extended to a continuous time model with discrete state space $\eta_t\in \{ \eta^1,\eta^2,\ldots, \eta^N\}$ for all $t\in [0,L]$, as long as there is a positive probability of jumping from your current income level to the lowest, at any point in time. 
However, if we  assume the stochastic income process to be a random walk, in which the state space is given by $\{\eta^1,\ldots, \eta^N\}$ with $\eta^i\leq \eta^j$ for all $i\leq j$, then at each point in time the income process may jump one level up or one level down. Thus, the natural borrowing limit at time $t$ depends on the state of the income process at time $t$. In particular, if $\eta_t=\eta^i$, then the maximal amount you are willing to borrow will be given in terms of the lowest income state possible in the next period, i.e. given that $\eta_t=\eta^i$
\[
\underline{w}_t = -\eta^{i-1} \int_t^L  e^{-\int_s^L r_u \dd u }\dd s
\]
 The reason for this is that the consumption process may be updated accordingly in the case of a bad income shock. Note in particular that since the income process may jump up and down, the borrowing limit in this model is depending on time as well as the current income level at that time. The natural borrowing limit therefore becomes heterogeneous in income. This gives a much more generous natural borrowing limit for individuals in high income states.

Just as a random walk can approximate a Brownian motion, the next proposition will show that when income is an It\^o diffusion we also recover a heterogeneous  natural borrowing limit in the case of CRRA utility functions, conditioned on having a unique solution to the associated optimal control problem. 

\begin{prop}[Lower bound for the natural borrowing limit]\label{prop:natural_borrowing_limit}
    Let $\gamma_1,\,\gamma_2>0$, 
    \begin{equation}
        u_1(x) \coloneqq \begin{cases}
            \frac{1}{1-\gamma_1}x^{1-\gamma_1}, & x\geq 0,\\
            -\infty, & x<0,
        \end{cases}
        \qquad 
        u_2(x) \coloneqq \begin{cases}
            \frac{1}{1-\gamma_2}x^{1-\gamma_2}, & x\geq 0,\\
            -\infty, & x<0,
        \end{cases}
    \end{equation}
    and $(w,c(w))$ be a solution to the system \eqref{eq:life_cycle_explicit} which is also a solution to the optimal control problem \eqref{eq:life_cycle_problem}. Then for all $t\in [0,L]$ it holds that
    \begin{equation}\label{eq:natural borrowing limit}
           w_t\,  \geq\,  \underline{w}_t\coloneqq -\int_t^L \exp\left(-\int_t^s r_u \dd u\right)\EE[ \eta_s|\cF_t]\dd s, \quad \PP-a.s..
    \end{equation}
    %
%
    %
\end{prop}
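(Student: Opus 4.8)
The plan is to prove the lower bound by contradiction, exploiting the terminal constraint that optimality forces $w_L \geq 0$ almost surely. First I would record the key consequence of having a solution to \eqref{eq:life_cycle_problem}: since $u_2(x) = -\infty$ for $x < 0$, the payoff $\mcJ_L$ is $-\infty$ whenever $\mbP(w_L < 0) > 0$, so any optimal policy $(w,c(w))$ must satisfy $w_L \geq 0$ $\mbP$-a.s. (one must check the supremum is not identically $-\infty$, which follows from admissibility of, e.g., a conservative consumption policy keeping wealth positive). Fix $t \in [0,L]$ and work on the event; the idea is that the wealth SDE on $[t,L]$ is linear in $w$ with a source term depending on $\eta$ and $c$, so I can solve it explicitly by the integrating factor $\exp(\int_t^s r_u\,\dd u)$.

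Concretely, from $\dd w_s = (r_s w_s + \eta_s - c_s(w))\,\dd s$ the variation-of-constants formula gives, for $s \in [t,L]$,
\begin{equation*}
    w_s = \exp\left(\int_t^s r_u\,\dd u\right) w_t + \int_t^s \exp\left(\int_\tau^s r_u\,\dd u\right)\big(\eta_\tau - c_\tau(w)\big)\,\dd\tau.
\end{equation*}
Evaluating at $s = L$, dividing by $\exp(\int_t^L r_u\,\dd u)$, and using $w_L \geq 0$ yields
\begin{equation*}
    w_t \geq -\int_t^L \exp\left(-\int_t^\tau r_u\,\dd u\right)\big(\eta_\tau - c_\tau(w)\big)\,\dd\tau \geq -\int_t^L \exp\left(-\int_t^\tau r_u\,\dd u\right)\eta_\tau\,\dd\tau,
\end{equation*}
where the last step uses $c_\tau(w) \geq 0$, which holds because $c_\tau(w) = (u_1')^{-1}(\cdots)$ and for the CRRA function $(u_1')^{-1}(y) = y^{-1/\gamma_1} \geq 0$ (here one also needs that the argument $\lambda \exp(\int_\tau^L(r_u-\delta)\,\dd u)\,\mbE[u_2'(w_L)\mid\mcF_\tau]$ is a.s. positive, which follows from $w_L \geq 0$ and $u_2'(x) = x^{-\gamma_2} > 0$ for $x>0$, together with $\mbP(w_L>0)>0$ conditionally — a point to be a little careful about). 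Finally, taking $\mbE[\,\cdot\mid\mcF_t]$ of the pathwise inequality $w_t \geq -\int_t^L \exp(-\int_t^\tau r_u\,\dd u)\eta_\tau\,\dd\tau$ and using that $w_t$ is $\mcF_t$-measurable while $r$ is deterministic gives \eqref{eq:natural borrowing limit}.

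The main obstacle I anticipate is not the algebra but the measure-theoretic/integrability bookkeeping around the terminal constraint and the positivity of consumption: one must argue rigorously that optimality genuinely forces $w_L \geq 0$ a.s. (rather than merely that such policies are candidates), that the conditional expectations appearing in $c_\tau(w)$ are well-defined and positive $\mbP$-a.s. even though $u_2'$ blows up at $0$, and that all the integrals in the variation-of-constants formula converge so the manipulations are legitimate. A secondary subtlety is that $\mbE[\eta_s \mid \mcF_t]$ on the right-hand side should be justified as integrable; since $\eta$ is an Itô diffusion with the assumed coefficients this is standard, but it should be noted. Once these points are in place the inequality is immediate, and the statement that this lower bound is the natural generalisation of the deterministic/random-walk borrowing limits discussed above follows by inspection.
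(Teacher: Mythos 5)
Your proof is correct, and it reaches \eqref{eq:natural borrowing limit} by a more direct route than the paper's. Both arguments use the same three ingredients: optimality forces $w_L\geq 0$ $\PP$-a.s.\ (since $u_2=-\infty$ on negative arguments), the variation-of-constants representation of the wealth dynamics on $[t,L]$, and nonnegativity of the CRRA consumption policy. The difference is the order of operations. The paper first argues that the binding borrowing limit corresponds to the scenario $c_t(w)=0$, links this to the condition $\mbE[w_L\mid\mcF_t]=0$ via a conditional Jensen inequality applied to $x\mapsto x^{-\gamma_2}$, and only then solves the wealth equation under that extremal condition and drops the (nonnegative) consumption term. You instead derive the pathwise inequality $w_t\geq -\int_t^L e^{-\int_t^\tau r_u\,\dd u}\,\eta_\tau\,\dd\tau$ directly from $w_L\geq 0$ and $c\geq 0$, and condition on $\mcF_t$ only at the very end (monotonicity of conditional expectation, $\mcF_t$-measurability of $w_t$, determinism of $r$, and conditional Fubini--Tonelli for the nonnegative integrand). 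This bypasses both the Jensen step and the somewhat heuristic ``the lower bound is attained at zero consumption'' reasoning, and it even yields the pathwise bound as a by-product, of which the stated conditional-expectation bound is an immediate consequence. The caveats you flag are the right ones and are easily discharged: the value is not identically $-\infty$ because conservative admissible policies keep $w_L\geq 0$; the argument of $(u_1')^{-1}$ is a.s.\ strictly positive (possibly $+\infty$ when $w_L$ can touch $0$), which still gives $c\geq 0$; and since $\eta$ takes values in $\mbR_+$ the interchange of conditional expectation and time integral needs only Tonelli.
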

\begin{proof}
%
%

Our first observation is to note that since $c(w)$ is assumed to be an optimal control, the lower bound for $w_t$ must correspond to a situation where $c_t(w)=0$, since any positive amount of consumption is strictly preferred.

Secondly, we note that formally, for $i=1,\,,2$,
\begin{equation}
    u'_i(x) = \begin{cases}
        x^{-\gamma_i}, & x> 0,\\
        +\infty, & x\leq  0,
    \end{cases}
    \qquad
    (u'_i)^{-1}(x) = \begin{cases}
        x^{-\frac{1}{\gamma_i}}, & x > 0,\\
        +\infty, & x\leq 0.
    \end{cases}
\end{equation}
Hence, $\mbP-a.s.$ we have
\begin{align*}
\infty > c_t(w) =&\, (u'_1)^{-1} \bigg( \lambda \exp\bigg( \int_t^L (r_s-\delta) \dd s \bigg)  \EE \left[ u'_2(w_L) |\cF_t\right]\bigg) \geq 0.
\end{align*}
Furthermore, since $c(w)$ is the optimal consumption policy it cannot be the case that $w_L <0$, since in this case we would have $\mcJ_L(w,\, \eta|c) =-\infty$ which is clearly sub-optimal. Hence, we may use the precise form of $u_1,\,u_2$, their derivatives and inverses to see that
\begin{align}
    c_t(w) = &\, \lambda^{-\frac{1}{\gamma_1}} \exp\bigg( -\frac{1}{\gamma_1}\int_t^L (r_s-\delta) \dd s \bigg)  \EE \left[ w_L^{-\gamma_2}|\cF_t\right]^{-\frac{1}{\gamma_1}} \notag \\
    \leq &\, \lambda^{-\frac{1}{\gamma_1}} \exp\bigg( -\frac{1}{\gamma_1}\int_t^L (r_s-\delta) \dd s\bigg) \,  \EE \left[ w_L|\cF_t\right]^{\frac{\gamma_2}{\gamma_1}}, \label{eq:consumption_jensen_bound}
\end{align}
where in the last line we used the conditional Jensen inequality applied to the convex function $[0,+\infty) \ni x\mapsto x^{-\gamma_2}$. Note that since both $\gamma_1,\,\gamma_2 >0$ inequality \eqref{eq:consumption_jensen_bound} implies that
\begin{equation*}
    \mbE[w_L\,|\mcF_t] =0 \quad \Rightarrow \quad c_t(w)=0.
\end{equation*}

From the dynamics of $w$, by variation of constants and a bit of algebraic manipulation, it is clear that 
\begin{align*}
    w_L = &\, \exp\left(\int_t^L r_u \dd u \right)\bigg(\exp\left(\int_0^t r_u \dd u \right) w_0 
    +\int_0^t \exp\left(\int_s^t r_s \dd s\right) (\eta_s- c_s)\dd s\bigg) 
    \\
    &\, +\int_t^L \exp\left(\int_s^L r_s \dd s\right) (\eta_s- c_s)\dd s.
\end{align*}
We can rewrite this in the following simplified form 
\begin{equation*}
    w_L = \exp\left(\int_t^L r_u \dd u \right) w_t +\int_t^L \exp\left(\int_s^L r_u \dd u\right) (\eta_s- c_s)\dd s.  
\end{equation*}
As argued above, the lower bound for $w_t$ corresponds to the case $c_t(w)$ which is guaranteed by having $\mbE[w_L|\mcF_t] =0$. In this case, since $w_t$ is $\mcF_t$-measurable,
\begin{equation*}
    \EE[w_L|\cF_t]= 0 \, \Longleftrightarrow \, \exp\left(\int_t^L r_u \dd u \right) w_t=-\int_t^L \exp\left(\int_s^L r_u \dd u\right)\EE[ (\eta_s- c_s)|\cF_t]\dd s. 
\end{equation*}
So that rearranging, and again using that we have argued  that $c(w)\geq 0$, 
\begin{align*}
    w_t = &\, -\int_t^L \exp\left(-\int_t^s r_u \dd u\right)\EE[ (\eta_s- c_s)|\cF_t]\dd s, \\
    =&\, -\int_t^L \exp\left(-\int_t^s r_u \dd u\right)\EE[\eta_s|\cF_t]\dd s  + \int_t^L \exp\left(-\int_t^s r_u \dd u\right)\EE[c_s|\cF_t]\dd s\\
    \geq &\, -\int_t^L \exp\left(-\int_t^s r_u \dd u\right)\EE[\eta_s|\cF_t]\dd s,
\end{align*}
which proves the claim.
\end{proof}

In economics the natural borrowing limit ( \cite{aiyagari_94_uninsured}, \cite{achdou_han_lasry_lions_moll_21_income} and others) is constant, ensuring that individuals cannot engage in Ponzi schemes. Practically, the borrowing limit has been used computationally to define the lower bound of assets when solving the model numerically. In contrast, $\underline{w}_t$ is stochastic and defined for each individual realization of wealth.

\begin{rem}
    The natural borrowing limit allows us to conclude that terminal wealth is almost surely positive, i.e.  $w_L\geq 0$ $\PP$-a.s.. Indeed, this follows directly from \eqref{eq:natural borrowing limit} since for $t=L$, 
\[
  \underline{w}_L=-\int_L^L \exp\left(-\int_t^s r_u \dd u\right)\EE[ \eta_s|\cF_L]\dd s=0.
\]
\end{rem}

\begin{rem}
    It should be noted that we do not find the natural borrowing limit itself, but rather a lower bound for the natural borrowing limit. The reason is that we use Jensen's inequality to find a bound for the consumption. It could therefore be that the consumption process would be 0, $\PP$-a.s.,  at a wealth level which is higher than $\underline{w}$. 
\end{rem}

\begin{ex}
    Suppose the income rate is given as an exponential Brownian motion, i.e. as $\eta_t = \eta_0\exp\left(\left(\mu-\frac{\sigma^2}{2}\right)t + \sigma \beta_t\right)$ for some parameters $\mu,\,\sigma>0$. This process can be decomposed into a martingale and non-martingale part
    \begin{equation*}
        \eta_t = \eta_0\exp\left(-\frac{\sigma^2}{2}t + \sigma \beta_t\right)\exp\left(\mu t\right)
    \end{equation*}
    so that for any $\mu \geq 0$ we have 
    \begin{equation*}
        \underline{w}_t = -\eta_t\int_t^L\exp\left(-\int_t^sr_u -\mu \,\dd u\right) \dd s. 
    \end{equation*}

    So the natural borrowing limit in this case is a discounting of the current income level of the individual according to future interest rates. In particular, when $r$ is constant the expression becomes 
    $$
        \underline{w}_t = -\frac{\eta_t}{r-\mu} (1-e^{-(r-\mu)(L-t)})
    $$
\end{ex}
%
%
\begin{figure}[!htbp]
  \centering
  \begin{subfigure}[t]{0.51\textwidth}
    \centering
\includegraphics[width=\linewidth]{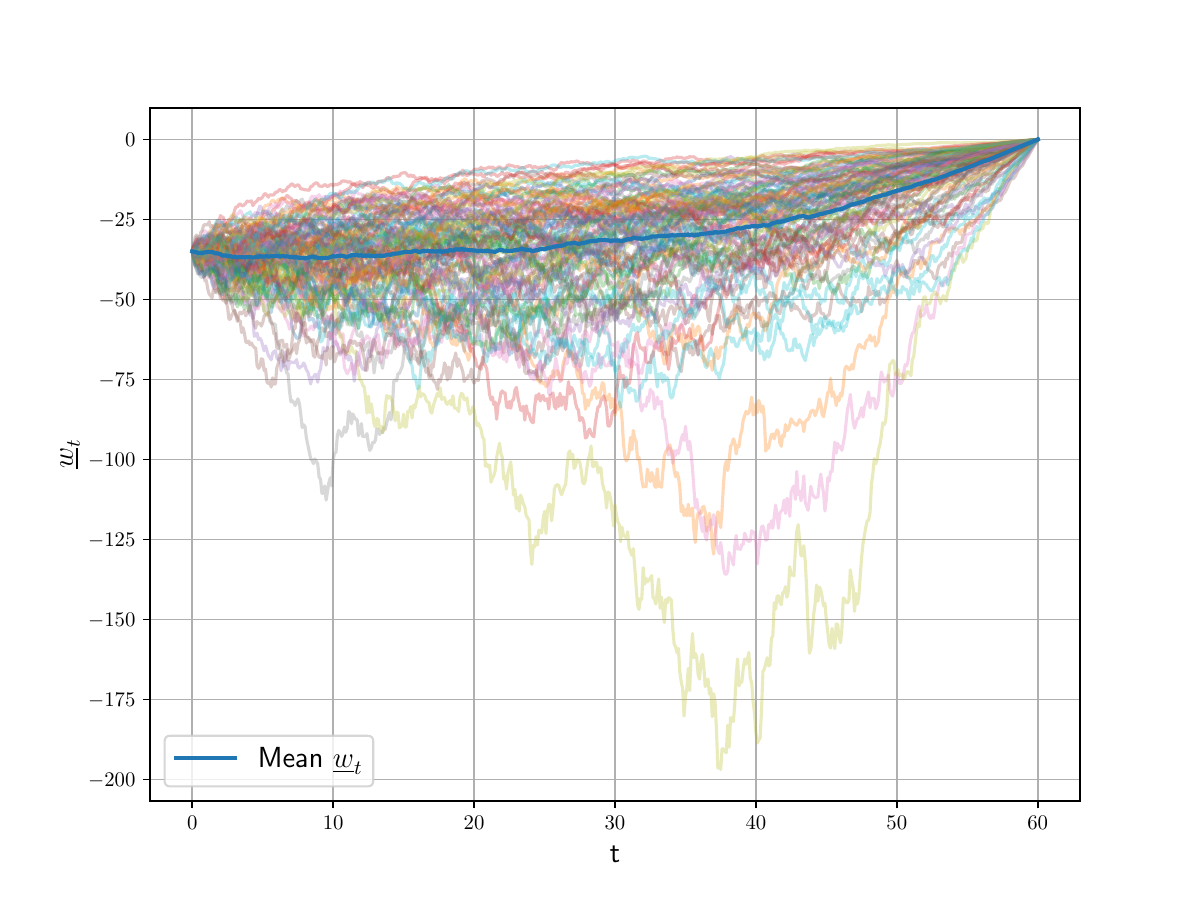}
    \caption{}
    \label{fig:borrowing-limit-dynamic}
  \end{subfigure}
  \hspace{-2em}
  \begin{subfigure}[t]{0.51\textwidth}
    \centering
    \includegraphics[width=\linewidth]{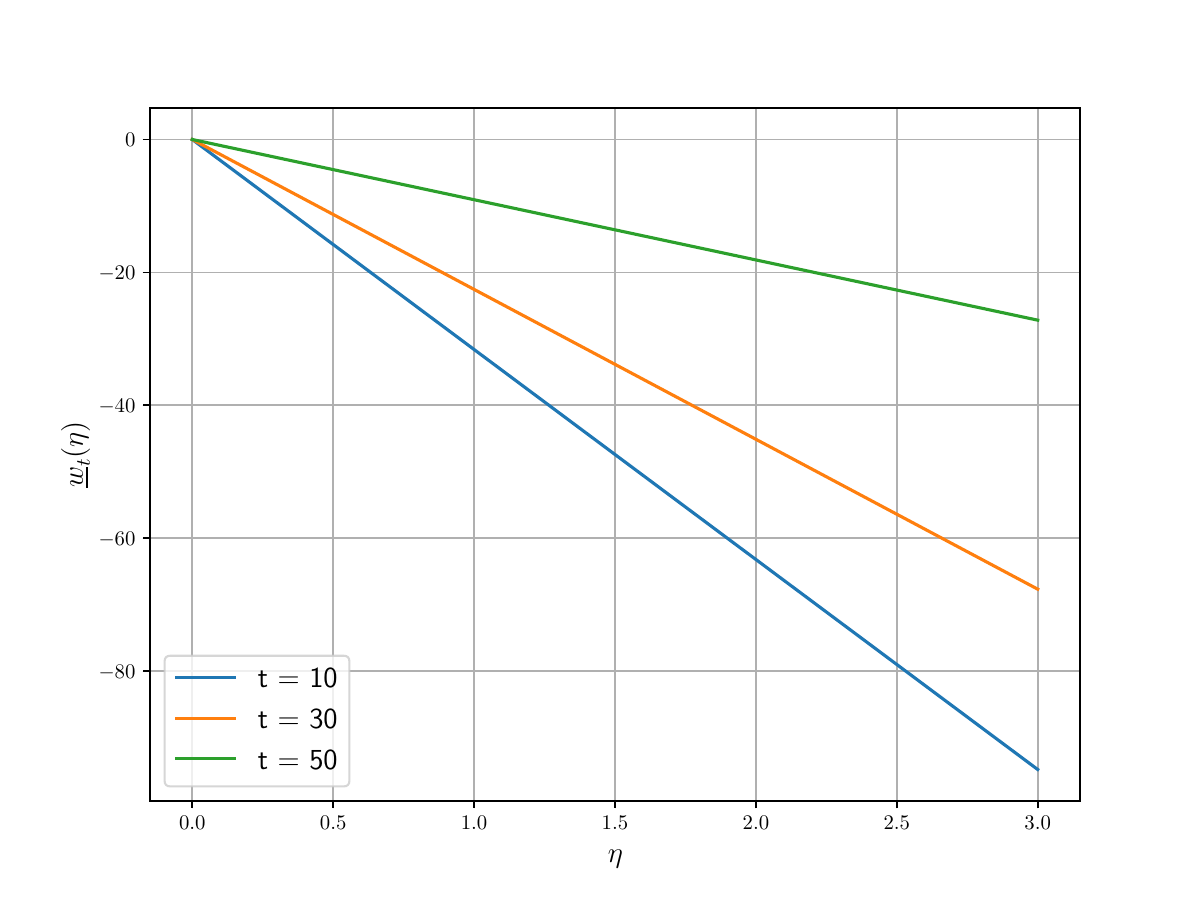}
    \caption{}
    \label{fig:borrowing-limit-static}
  \end{subfigure}
  \hfill
 \caption{Stochastic natural borrowing limit. We use $\mu=0.01$, $\sigma=0.1$, $\eta_0=1.0$, $r=0.03$, $L=60$. In panel (A) the blue line depicts expected borrowing limit $\mbE[\underline{w}_t]$.
     }
    \label{fig_nbl}
\end{figure}
%
%
In Figure (\ref{fig_nbl}) we simulate the natural borrowing limit for the case when income follows a geometric Brownian motion and interest rate is constant. In panel (A), we see that most individuals will be able to borrow more early in their life-cycle on average, but their level depends on their idiosyncratic realization of income. In panel (B), $\underline{w}_t = -\eta_t  \frac{1 - e^{-(r-\mu)(L-t)}}{r}$ is depicted (as a plot of $\underbar{w}$ against income at varying times) for a constant interest rate $r$. Intuitively, the borrowing limit is tighter for low-income individuals $\eta_t$. Furthermore, individuals will face a tighter borrowing limit later in life as their human capital falls. Hence, the line corresponding to $t=50$ is higher (i.e. tighter lower bound) than that for $t=10$ at all income levels.

\subsection{Asymptotic Values of \boldmath{$r\mapsto \EE[w_t(r)]$}}\label{sec:life_cycle_asymptotics}

Using the specific structure of the optimal consumption and wealth, we can provided quantitative bounds for the asymptotic limits of expected wealth, $\EE[w_t]$  as a function of $r\in \RR$. As expected, when $r\rightarrow \infty$, we can show that aggregate wealth $\EE[w_t]$ also tends to infinity. 
From the lower bound of wealth found in Proposition \ref{prop:natural_borrowing_limit} it is reasonable to expect that wealth diverges to $-\infty $ as $r\rightarrow -\infty$. However, proving this seems to be challenging. For our purpose, however, it is sufficient to prove that this  limit is non-positive. We prove these facts in the next proposition.

\begin{prop}\label{prop:range of expected w}
    Assume $u_1$ and $u_2$ are such that 
    \[ (u'_1)^{-1}(u'_2(x))\leq x \quad \forall x\in \RR,\]
    and that the inverse  $(u'_1)^{-1}$ is homogenous of degree $-\alpha$ for some $\alpha\geq1$.
    Suppose further that there exists a pair $(w,c)$ which solves \eqref{eq:life_cycle_sde_sol}. 
    Then, for all $t\in [0,L]$ it holds that
    \begin{equation*}
            \lim_{r\mapsto \infty} \EE[w_t(r)] =\infty, \quad \mathrm{and} \quad \lim _{r\mapsto -\infty} \EE[w_t(r)]\leq 0 . 
    \end{equation*}
\end{prop}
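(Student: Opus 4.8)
The plan is to study the two limits separately, both through the variation-of-constants representation of optimal wealth. For a constant interest rate $r$, the wealth equation in \eqref{eq:life_cycle_explicit} integrates to $w_t(r)=e^{rt}w_0+\int_0^t e^{r(t-s)}\big(\eta_s-c_s(w)\big)\dd s$ pathwise, so that
\begin{equation*}
e^{-rt}\EE\big[w_t(r)\big]=\EE[w_0]+\int_0^t e^{-rs}\EE[\eta_s]\dd s-\int_0^t e^{-rs}\EE[c_s(w)]\dd s .
\end{equation*}
Two elementary facts would be used throughout. First, $c\geq0$: being homogeneous of negative degree and the inverse of a decreasing function, $(u_1')^{-1}$ is a positive multiple of $y\mapsto y^{-\alpha}$ on $(0,\infty)$, in particular nonnegative and convex. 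Second, since $c\geq0$ and $e^{r(L-s)}\leq e^{rL}$ for $r\geq0$, we have $w_L\leq e^{rL}M$ with $M\coloneqq w_0+\int_0^L\eta_s\dd s$, whence $\EE[w_L\mid\mcF_t]\leq e^{rL}\EE[M\mid\mcF_t]$ and $\EE[M]<\infty$ independently of $r$.

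For $r\to-\infty$ the argument is immediate: from $c\geq0$ one gets $w_t(r)\leq e^{rt}w_0+\int_0^t e^{r(t-s)}\eta_s\dd s$, and taking expectations and letting $r\to-\infty$ both terms vanish by dominated convergence — for $r\leq0$ and $s\leq t$ the integrand is bounded by $\EE[\eta_s]$ and $e^{r(t-s)}\to0$ for $s<t$, while $e^{rt}\to0$ for $t>0$ — so $\limsup_{r\to-\infty}\EE[w_t(r)]\leq0$.

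For $r\to+\infty$ the crux is an a priori bound on consumption. Using homogeneity of $(u_1')^{-1}$ of degree $-\alpha$, its convexity, conditional Jensen, and the hypothesis $(u_1')^{-1}(u_2'(x))\leq x$, the last line of \eqref{eq:life_cycle_explicit} gives
\begin{equation*}
c_t(w)=\lambda^{-\alpha}e^{-\alpha(r-\delta)(L-t)}(u_1')^{-1}\big(\EE[u_2'(w_L)\mid\mcF_t]\big)\leq \lambda^{-\alpha}e^{-\alpha(r-\delta)(L-t)}\,\EE[w_L\mid\mcF_t].
\end{equation*}
Inserting $\EE[w_L\mid\mcF_t]\leq e^{rL}\EE[M\mid\mcF_t]$ and collecting the exponents (the coefficient of $r$ becomes $\alpha t-(\alpha-1)L$) yields
\begin{equation*}
\EE\big[e^{-rs}c_s(w)\big]\leq \lambda^{-\alpha}e^{\alpha\delta L}\,\EE[M]\,e^{-r(\alpha-1)(L-s)},
\end{equation*}
with $\EE[M]\geq0$ (forced by $c\geq0$). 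When $\alpha>1$ the right-hand side is integrable over $s\in[0,t]$ with $\int_0^t e^{-r(\alpha-1)(L-s)}\dd s\to0$ as $r\to+\infty$; combined with $\int_0^t e^{-rs}\EE[\eta_s]\dd s\to0$, the displayed identity gives $e^{-rt}\EE[w_t(r)]\to\EE[w_0]$, and hence $\EE[w_t(r)]\sim e^{rt}\EE[w_0]\to+\infty$ for every $t\in(0,L]$ provided $\EE[w_0]>0$ (solvability of the system for large $r$ already forces $\EE[w_0]\geq0$, by the same kind of estimate applied to $\EE[w_L]\geq0$).

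The chief obstacle is the borderline case $\alpha=1$, i.e.\ logarithmic running utility: then $e^{-r(\alpha-1)(L-s)}\equiv1$ and the consumption bound degrades to the $r$-uniform but non-negligible estimate $c_t(w)\leq\lambda^{-1}e^{\delta L}\big(w_t^{+}+\EE[\int_t^L\eta_s\dd s\mid\mcF_t]\big)$, which does not vanish after dividing by $e^{rt}$. Closing the argument there would require a Grönwall-type bootstrap on $t\mapsto\EE[w_t(r)]$ together with a quantitative lower bound on the income contribution; the same refinement covers the degenerate case $\EE[w_0]=0$, and in every case one needs a mild non-degeneracy hypothesis — e.g.\ $\EE[w_0]+\int_0^t\EE[\eta_s]\dd s>0$ — to exclude the trivial situation $w_0\equiv0$, $\eta\equiv0$ in which $\EE[w_t(r)]\equiv0$. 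The routine Fubini interchanges and the integrability of $\eta$ used above are justified by the standing regularity of the coefficients.
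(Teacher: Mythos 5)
Your argument is essentially the paper's own proof: the same variation-of-constants identity for $\EE[w_t(r)]$, the same Jensen-plus-homogeneity bound $\EE[c_t(r)]\lesssim \lambda^{-\alpha}e^{-\alpha r(L-t)}\EE[w_L]$ combined with the crude bound on $\EE[w_L]$ obtained by dropping consumption, and the same dominated-convergence argument (using $c\geq 0$) for the limit $r\to-\infty$. For $\alpha>1$ and $\EE[w_0]>0$ your write-up is correct and, if anything, slightly more careful than the paper's (you keep the discount factor $e^{\alpha\delta L}$ and you make the hypothesis $\EE[w_0]>0$ explicit, which the paper uses in its proof without stating it in the proposition). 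The one case you declare open, $\alpha=1$, is not something the paper resolves either: in the paper's proof the claim that $\lambda^{-\alpha}e^{(1-\alpha)rL}e^{(\alpha-1)rs}\to 0$ as $r\to\infty$ fails at $\alpha=1$, where this factor is identically $\lambda^{-1}$, so the lower bound on $A(r)$ does not become nonnegative and the published argument only closes for $\alpha$ strictly greater than $1$ (or under an additional smallness/size condition such as $\lambda>L$). So you have not missed an idea present in the paper; you have simply flagged honestly a borderline case that the paper glosses over, and your suggestion that a Gr\"onwall-type bootstrap or an extra hypothesis would be needed there is a reasonable assessment.
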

\begin{proof}
By variation of constants formula, we can write 
\[
\EE[w_t] = e^{rt}\left(\EE[w_0]+\int_0^t e^{-rs} (\EE[\eta_s]-\EE[c_s(r)])\dd s\right). 
\] 
%
To analyse the asymptotic behaviour when $r\rightarrow \infty $ or $-\infty$ we need to investigate the behaviour of the final integral term in the above representation, namely
\[
A(r):=\int_0^t e^{-rs}\left(\EE[\eta_s]-\EE[c_s(r)]\right)\dd s. 
\]
Using that $(u'_1)^{-1}$ is convex and homogenous of degree $-\alpha$, and that $u'_2$ is such that 
\[
(u'_1)^{-1}(u'_2(x))\leq x \quad \forall x\in \RR, 
\]
it follows from Jensen's inequality that 
\[
\EE[c_t(r)] \leq \lambda^{-\alpha} e^{-\alpha r(L-t)} \EE[w_L].
\]
Note that 
\[\EE[w_L]= e^{rL}\bigg(\EE[w_0]+\EE\Big[\int_0^L e^{-rs}\big(\eta_s-c_s(r)\big)\dd s\Big] \bigg)\leq e^{rL}\bigg(\EE[w_0]+\int_0^L e^{-rs}\EE[\eta_s]\dd s\bigg) .
\]
Combining these estimates, we see that
\[
\EE[c_t(r)] \leq \lambda^{-\alpha} e^{(1-\alpha) rL} e^{\alpha r t}\bigg(\EE[w_0]+\int_0^L e^{-rs}\EE[\eta_s]\dd s\bigg) 
\]
Using this, we see that 
\[
A(r)\geq \int_0^t e^{-rs}\EE[\eta_s]-\lambda^{-\alpha} e^{(1-\alpha) rL} e^{(\alpha-1)rs} \bigg(\EE[w_0]+\int_0^L e^{-rl}\EE[\eta_l]\dd l\bigg)\dd s. 
\]
%
 Recall that $t\leq L$ and $\kappa\geq 1$. Then we see that 
\[
\lambda^{-\alpha} e^{(1-\alpha) rL} e^{(\alpha-1)rs}\rightarrow 0 \quad {\mathrm{as}} \quad r\rightarrow \infty,  
\]
and thus it follows that 
\[
A(r)\geq 0 \quad \mathrm{as}\quad r\rightarrow \infty.
\]
Using this, together with the assumption that $\EE[w_0]>0$ and that 
\[
\EE[w_t] \geq e^{rt}(w_0 + A(r)).
\]
we see that for all $t\in [0,L]$
\[
\EE[w_t]\rightarrow \infty \quad \mathrm{as} \quad r\rightarrow \infty. 
\]

We conclude by proving that $\lim_{r\to -\infty}\EE[w_t]\leq0$ for all $t\in [0,L]$. But this follows from an easier estimate; by assumption, $c_t\geq 0$ for all $t$. Then  clearly 
\[
\EE[w_t]\leq e^{rt}\bigg(\EE[w_0]+\int_0^t e^{-rs}\EE[\eta_s]\bigg)
\]
From this it is clear that $\EE[w_t]\leq  0$ as $r\rightarrow -\infty$ for all $t$.
\end{proof}

\begin{rem}
    In the context of CRRA utilities with $u_1(x)=u_2(x)=\frac{x^{1-\gamma}}{1-\gamma}$, then $(u'_1)^{-1}(x)=x^{-\frac{1}{\gamma}}$, and thus in this case $\alpha=-\nicefrac{1}{\gamma}$. So that Proposition~\ref{prop:range of expected w} applies provided $\gamma\leq 1$. 
\end{rem}

\begin{rem}
  In principle, if we know that a function $f:\RR\rightarrow \RR$ is continuous and that $\lim_{r\rightarrow \infty}f(r)=\infty$ and $\lim_{r\rightarrow-\infty}f(r)\leq 0$, 
  then we can conclude that for any $K\geq 0$ there exists an $r\in \RR$ such that $f(r)=K$. That is, in our context we conclude that for each $t\in [0,L]$ there exists an $r \in \RR$ (depending on $t$) such that $\EE[w_t]=K$. Therefore, this does not allow us to conclude that there exists a general equilibrium interest rate path in the life-cycle case. To see this, recall that at any time $t\in [0,L]$, wealth $C([0,L];\mbR) \ni r\mapsto w_t(r)$ is a functional of the entire path of the interest rate, not just the interest rate at time $t\in [0,L]$. Concretely, it is not clear, from Proposition~\ref{prop:range of expected w} that there exists a continuous selection $t\mapsto r(t)$ to give $\mbE[w_{\,\cdot\,}] \equiv K $ on $[0,L]$. However, in the OLG setting analysed in subsequent sections, we can use this technique to prove existence in stationary economies.
  %
  %
\end{rem}

\subsection{Existence and uniqueness of life-cycle SDE dynamics}\label{sec:life_cycle}

Theorem~\ref{thm:life_cycle_optimal_rep} shows that given concave utility functions, an optimal wealth process is found by solving the system 
\begin{equation}\label{eq:triple_equations}
    \begin{aligned}
        w_t =&w_0 + \int_0^t \left(r_s w_s -c_s(w,r) +\eta_s \right)\dd s 
        \\
      \eta_t = &\eta_0 + \int_0^t \mu_s(\eta_s) \dd s +  \int_0^t\sigma_s(\eta_s)\dd   \beta_s 
        \\
        c_t(w,r) = & (u_1')^{-1}\left(\lambda \EE\left[\exp\left(\int_t^L (r_s-\delta)\dd s\right) u'_2(w_L)|\cF_t\right]\right),
    \end{aligned}
\end{equation}
on $[0,L]$, where we see the consumption $c$ as a functional of the continuous paths $w$ and $r$. 
Note that by variation of constants, we can rewrite the equation for wealth as
\begin{equation*}
w_t =  e^{-\int_0^tr_s \dd s} w_0 + \int_0^t \exp\left(\int_{s}^t r_u \dd u\right)\left(c_s(w,r) +\eta_s\right) \dd s,\quad t\in [0,L].
\end{equation*}

The purpose of this section is to show that under our standing Assumption \ref{ass:utility_reg_intro} there exists a unique solution to \eqref{eq:triple_equations}.
%
     %
%
     %
     %
%
%
%
The following example illustrates an approximation to the commonly used CRRA utility functions which satisfy Assumption~\ref{ass:utility_reg_intro}.
\begin{ex}\label{ex:quad_crra_approx} 
    The well-studied CRRA utility functions do not satisfy Assumption~\ref{ass:utility_reg_intro}, since for $\gamma>0$ one has
    \begin{equation*}
        u(x) = \begin{cases}
            \frac{1}{1-\gamma} x^{1-\gamma}, &x > 0,\\
            -\infty, & x \leq 0,
        \end{cases}\quad\Rightarrow \quad  u'(x) = \begin{cases}
            x^{-\gamma}, & x>0,\\
            -\infty, & x\leq 0.
        \end{cases}
    \end{equation*}
    The structure of these utility functions is such that $u(|x|) \gg u(-|x|)$ for all $x\in \mbR$; in other words, positive inputs always return higher outputs than a negative input. In practical terms, this models the idea that agents would always rather consume any positive quantity than any negative quantity. With this in mind, for practical purposes, we can suitably approximate $u$ by a sufficiently regular function, retaining the property of preferential consumption at the level of machine tolerance, along with concavity.
    
    For $0<\eps \ll 1$ and any $p\geq 1$, let us define
       \begin{equation*}
   	u_{\eps,p}(x) \coloneqq \begin{cases}
   		\frac{1}{1-\gamma}x^{1-\gamma}, & x \geq \eps,\\
   	-\frac{1}{2\eps^p}x^2 + \left(\frac{1}{\eps^\gamma}+\frac{1}{\eps^{p-1}}\right)x- \frac{1}{2\eps^{p-2}}+ \frac{\gamma}{1-\gamma}\eps^{1-\gamma},  & x\leq \eps.
   	\end{cases}
   \end{equation*}
    The quadratic polynomial is chosen so that the function $x\mapsto u_{\eps,p}(x)$ remains convex while satisfying Assumption~\ref{ass:utility_reg_intro} and is continuous at $x=\eps$. To wit, taking the first derivative we find
       \begin{equation*}
    	u'_{\eps,p}(x) = \begin{cases}
    		x^{-\gamma}, & x\geq \eps,\\
    	-\frac{1}{\eps^p}x + \left(\frac{1}{\eps^\gamma}+\frac{1}{\eps^{p-1}}\right),& x\leq \eps,
    	\end{cases}
    \end{equation*}
    and so
        \begin{equation*}
 	u''_{\eps,p}(x) = \begin{cases}
 		-x^{-(\gamma+1)}, & x> \eps,\\
 			-\infty, & x = \eps,\\
 		-\frac{1}{\eps^p},& x<\eps.
 	\end{cases}
 \end{equation*}
 Hence, $x\mapsto u_{\eps,p}(x)$ is globally concave. In addition, we have
      \begin{equation*}
    	(u'_\eps)^{-1}(y) = \begin{cases}
    	y^{-\gamma}, & y\geq \eps,\\
    	-\eps^p y - \left(\frac{1}{\eps^\gamma}+\frac{1}{\eps^{p-1}}\right),& y\leq \eps.
    \end{cases}
 \end{equation*}
 So that one readily checks that Assumption~\ref{ass:utility_reg_intro} is satisfied.\end{ex}

\subsection{Stability estimates}

A particular strength of the FBSDE formulation of the optimal control problem in \eqref{eq:life_cycle_explicit}, is that the optimal consumption policy function is explicitly stated in terms of input variables, and thus stability of this function is easily derived. We do this in the following propositions, which will play a central role in subsequent proofs of general equilibrium.

\begin{prop}[Stability of Consumption]\label{prop:life_cycle_consumption_stabillity}
    Suppose the optimal consumption policy, $c:[0,L]\times \scL^1(\Omega;C_L)\times C([0,L];\RR)\rightarrow \RR$ is given by that of \eqref{eq:life_cycle_explicit}, where $u_1$ and $u_2$ satisfy Assumption~\ref{ass:utility_reg_intro} for some $\kappa>0$. 
   Then, for any given $w,\, v \in \scL^0(\Omega;C_L)$ and $r,\,h\in C_L$, it holds that
\begin{equation}\label{eq:life_cycle_consump_bdd}
    \|c(w,r)\|_{\msL^\infty_L}\leq \kappa, \quad \mbP-\text{a.s.}
    \end{equation}
   and
    \begin{align}
    \|c(w,r)-c(w,h)\|_{\msL^\infty_L} &\leq \lambda  \kappa \exp\left((\|r\|_{\msL^\infty_L}+\|h\|_{\msL^\infty_L})L\right) \| r-h \|_{\msL^\infty_L},\label{eq:consump_interest_stable}\quad &&\PP-a.s.
    \\
   \|c(w,r)-c(v,r)\|_{\msL^\infty_L} &\leq \lambda \kappa^2 \exp\left(\|r\|_{\msL^\infty_L} L\right) \|w-v\|_{\msL^\infty_L}, \quad &&\PP-a.s. \label{eq:consump_wealth_stable} %
    \end{align}
\end{prop}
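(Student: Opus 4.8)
The plan is to read off all three estimates directly from the closed-form expression for optimal consumption in \eqref{eq:life_cycle_explicit}, using only the structural properties collected in Assumption~\ref{ass:utility_reg_intro}; no fixed-point or compactness input is needed here. Throughout write $c_t(w,r) = (u_1')^{-1}\big(\Lambda_t(w,r)\big)$ with $\Lambda_t(w,r) \coloneqq \lambda \exp\big(\int_t^L (r_s-\delta)\,\dd s\big)\,\EE\big[u_2'(w_L)\mid\cF_t\big]$. The first thing to pin down is that $\Lambda_t(w,r)\ge 0$ $\mbP$-a.s.: indeed $\lambda>0$, the exponential is strictly positive, and $\inf_x u_2'(x)\ge 0$ forces $u_2'(w_L)\ge 0$, hence $\EE[u_2'(w_L)\mid\cF_t]\ge 0$. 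This is not a cosmetic point, since the bounds on $(u_1')^{-1}$ in Assumption~\ref{ass:utility_reg_intro} — both the uniform bound and the Lipschitz bound — are posited only on $[0,+\infty)$, so one must know the argument lands there before invoking them. Granting this, \eqref{eq:life_cycle_consump_bdd} is immediate: $|c_t(w,r)| = |(u_1')^{-1}(\Lambda_t(w,r))| \le \sup_{y\ge 0}|(u_1')^{-1}(y)| \le \kappa$, uniformly in $t$, $w$, $r$ and $\omega$.

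For \eqref{eq:consump_interest_stable}, observe that $c(w,\cdot)$ sees the interest-rate path only through the deterministic scalar $\exp\big(\int_t^L(r_s-\delta)\dd s\big)$, so $\Lambda_t(w,r)$ and $\Lambda_t(w,h)$ differ only in this factor and share the common nonnegative multiplier $\lambda\,\EE[u_2'(w_L)\mid\cF_t]$, which is $\le\lambda\kappa$ by the uniform bound on $u_2'$. Applying the Lipschitz bound on $(u_1')^{-1}$, then the elementary estimate $|e^a-e^b|\le e^{a\vee b}|a-b|$ with $a=\int_t^L(r_s-\delta)\dd s$ and $b=\int_t^L(h_s-\delta)\dd s$, and finally $|a-b|\le L\|r-h\|_{\msL^\infty_L}$ together with $\delta\ge 0$ (which removes the discount and bounds $a\vee b\le L(\|r\|_{\msL^\infty_L}+\|h\|_{\msL^\infty_L})$), one obtains after a supremum over $t\in[0,L]$ a bound of the form \eqref{eq:consump_interest_stable} (up to carrying the explicit constant through these steps).

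For \eqref{eq:consump_wealth_stable}, the interest rate is now fixed and $c(\cdot,r)$ depends on $w$ only through $\EE[u_2'(w_L)\mid\cF_t]$. The Lipschitz bound on $(u_1')^{-1}$ together with $\exp\big(\int_t^L(r_s-\delta)\dd s\big)\le\exp(L\|r\|_{\msL^\infty_L})$ (again $\delta\ge 0$) gives $|c_t(w,r)-c_t(v,r)| \le \kappa\lambda\,e^{L\|r\|_{\msL^\infty_L}}\,\big|\EE[u_2'(w_L)-u_2'(v_L)\mid\cF_t]\big|$. Then conditional Jensen, the Lipschitz property of $u_2'$, and the pointwise bound $|w_L-v_L|\le\|w-v\|_{\msL^\infty_L}$ give $\big|\EE[u_2'(w_L)-u_2'(v_L)\mid\cF_t]\big| \le \EE[|u_2'(w_L)-u_2'(v_L)|\mid\cF_t] \le \kappa\,\EE[|w_L-v_L|\mid\cF_t] \le \kappa\,\EE[\|w-v\|_{\msL^\infty_L}\mid\cF_t]$; taking the supremum over $t$ and reading the right-hand norm as the $\mbP$-essential supremum (so that the conditioning drops) yields the factor $\kappa\|w-v\|_{\msL^\infty_L}$ and hence \eqref{eq:consump_wealth_stable} with constant $\lambda\kappa^2 e^{L\|r\|_{\msL^\infty_L}}$.

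The computations are short; the only care needed is bookkeeping. One must (a) track that the argument of $(u_1')^{-1}$ stays in $[0,+\infty)$ so that Assumption~\ref{ass:utility_reg_intro} is applicable at every use, (b) ensure the conditional expectations above are well-defined (a harmless integrability requirement on $w,v$, automatic under the $\scL^1$ hypotheses invoked later), and (c) handle the martingale $t\mapsto\EE[\|w-v\|_{\msL^\infty_L}\mid\cF_t]$ appearing in \eqref{eq:consump_wealth_stable} — cleanest is to dominate $|w_L-v_L|$ by the sup-norm before conditioning. I do not anticipate a genuine obstacle: this proposition is a preparatory estimate whose real purpose is to feed the stability and equilibrium arguments of the following sections.
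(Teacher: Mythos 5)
Your route is essentially the paper's: all three bounds are read off directly from the explicit formula for $c_t(w,r)$, using the sign condition $u_2'\geq 0$, the uniform bound on $(u_1')^{-1}$ over $[0,+\infty)$, and the Lipschitz bounds of Assumption~\ref{ass:utility_reg_intro}. Your treatment of \eqref{eq:life_cycle_consump_bdd} and \eqref{eq:consump_wealth_stable} matches the paper's, and your handling of the conditional expectation in \eqref{eq:consump_wealth_stable} (dominating $|w_L-v_L|$ by the sup-norm before conditioning, and saying explicitly how the right-hand norm is to be read so the conditioning drops) is if anything more careful than the paper, which passes over this point without comment.

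The one place where your justification is incomplete is \eqref{eq:consump_interest_stable}: you bound the common multiplier $\lambda\,\EE[u_2'(w_L)\mid\cF_t]$ by $\lambda\kappa$ ``by the uniform bound on $u_2'$''. But Assumption~\ref{ass:utility_reg_intro} bounds $u_2'$ by $\kappa$ only on $[0,+\infty)$ --- exactly the point you were careful about for $(u_1')^{-1}$ --- and for an arbitrary $w\in\scL^0(\Omega;C_L)$, as allowed in the statement, $w_L$ may be negative, where $u_2'$ is only known to be nonnegative and Lipschitz and may grow linearly. Some additional input is needed to control $u_2'(w_L)$: the paper supplies it by invoking Proposition~\ref{prop:natural_borrowing_limit} to conclude $w_L\geq 0$ $\mbP$-a.s.\ and then using the bound on the positive half-line (a citation that is itself delicate, since that proposition concerns CRRA utilities and optimal paths rather than arbitrary $w$, but it is the route the paper takes). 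As written, your step would fail for a path whose terminal wealth is very negative, so you should either restrict to $w_L\geq 0$, argue as the paper does, or accept a constant involving $\EE[u_2'(w_L)\mid\cF_t]$. A lesser remark: carrying your constants through gives an extra factor of order $\kappa L$ relative to the stated bound in \eqref{eq:consump_interest_stable}; the paper's own computation is no more precise on this point, and the discrepancy is immaterial for the small-$L$ fixed-point arguments downstream.
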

%

\begin{proof}
    We recall the representation of the consumption given in \eqref{eq:life_cycle_explicit} and begin by proving \eqref{eq:life_cycle_consump_bdd}. Using the assumptions implied by Assumption~\ref{ass:utility_reg_intro} of boundedness of both $(u_1')^{-1}$ for non-negative arguments,  and that $u_2'$ is non-negative, it follows that 
    \begin{equation}\label{eq:bdd_consumption}
         | c_t(w,r)| = \left|(u_1')^{-1}\left(\lambda\, \EE\left[\exp\left(\int_t^L (r_s-\delta)\dd s\right) u'_2(w_L)|\cF_t\right]\right)\right| \leq \kappa
    \end{equation}
    Taking the supremum over $t\in [0,L]$ and squaring both sides yields the estimate in \eqref{eq:life_cycle_consump_bdd}. 
    
    We continue to prove stability of the map $r\mapsto c(w,r)$ for a given continuous process $w\in \msL^0(\Omega;C_L)$. 
    Under the assumption that $(u_1')^{-1}$ is Lipschitz, with Lipschitz constant $\kappa$, then it is readily seen that, for all $r,h\in C_L$,
    \[
     \|c(w,r)-c(w,h)\|_{\msL^\infty_L} \leq \kappa \lambda \EE[u_2'(w_L)|\cF_t]\exp((\|r\|_{\msL^\infty_L}+\|h\|_{\msL^\infty_L})(L-t))\| r-h \|_{\msL^\infty_L},\quad \PP-a.s..
    \]
    Using that $u'_2$ is bounded on the positive half line and that it follow from Proposition~\ref{prop:natural_borrowing_limit} that $\mbP$-a.s. $w_L \geq 0$ there exists a constant $C$ depending on the Lipschitz constant of $(u_1')^{-1}$,  and the bound on $u_2'$, such that
    \[
     \|c(w,r)-c(w,h)\|_{\msL^\infty_L} \leq \lambda  \kappa \exp((\|r\|_{\msL^\infty_L}+\|h\|_{\msL^\infty_L})L) \| r-h \|_{\msL^\infty_L}, 
    \]
    where in the last estimate we also used Jensen's inequality and that $\EE[\|w\|_{\msL^\infty_L}]\lesssim \EE[\|w\|_{\msL^\infty_L}^2]$. 
    
For the stability in $w\mapsto c(w,r)$, we proceed in a very similar way as above, i.e. we again invoke the Lipschitz assumptions on $(u_1')^{-1}$ and $u_2'$ from Assumption~\ref{ass:utility_reg_intro} to obtain the $\mbP$-a.s. bound
\begin{equation*}
     \|c(w,r)-c(v,r)\|_{\msL^\infty_L} \leq \lambda \kappa^2 \exp(\|r\|_{\msL^\infty_L} L) \|w-v\|_{\msL^\infty_L}. 
\end{equation*}
    This concludes the proof. 
    \end{proof}

Note that it's possible to interpret \eqref{eq:consump_interest_stable} as a bounded substitution effect.
Although in a more general, path-wise form as opposed to the scalar- or vector-valued inputs to demand functions discussed in, among others, \cite{mas-colell.whinston.ea95}.
Fixing a path for wealth while changing the interest rate path is as if we calculate compensated demand functions, or Hicksian demand functions.
The difference is that the compensating transfer takes the form of a path rather than a lump-sum transfer.

In a similar vein, \eqref{eq:consump_wealth_stable} is the path-wise analogy to the wealth effect.
It then follows from \ref{prop:life_cycle_consumption_stabillity} that arithmetic compositions of the two, such as the Slutsky identity, are bounded as well.
Our stability estimates highlights one benefit of the FBSDE formulation.
The explicitly stated optimal consumption policy enables a straight-forward way to derive certain results analogous to conventional economic theory.
Building on the bounds above, \eqref{lem:LC_rate_stable},  shows the stability of the uncompensated, or Marshallian, demand.

With the stability of the consumption policy at hand, we are now ready to prove stability of the solution  map $\Theta_L:\scL^1(\Omega;C_L)\times C_L \rightarrow \scL^1(\Omega;C_L) $ defined 
 for all $t\in [0,L]$ by setting
\begin{equation}\label{eq:Theta map}
\Theta_L(w,r)_t \coloneqq \exp\left(\int_{0}^t r_s \dd s\right)w_0 + \int_0^t \exp\left(\int_{s}^t r_s \dd s\right)\left(\eta_s-c_s(w,r) \right) \dd s.
\end{equation}
It is this map we will use to later provide conditions for existence and uniqueness of the system in \eqref{eq:life_cycle_explicit}, but will also play a role in our proofs of general equilibrium. 

\begin{prop}\label{prop:life_cycle_wealth_stability}
    Suppose the utility functions $u_1,\, u_2$ satisfy Assumption~\ref{ass:utility_reg_intro}, and $\eta\in \scL^0(\Omega;C_L)$  given stochastic process.  Then, for any given $w,\, v \in \scL^0(\Omega;C_L)$ and $r,\,h\in C_L$, it holds that
    \begin{equation}\label{eq:apriori_bound}
     \sup_{w\in \scL^0(\Omega;C_L)} \|\Theta_L(w,r)\|_{\msL^\infty_L}  \leq (1+L)\exp(\|r\|_{\msL^\infty_L}) \left(|w_0| +  \|\eta\|_{\msL^\infty_L}+ \kappa \right),\quad \mbP\text{-a.s.}
    \end{equation}
    and
    \begin{equation*}
    \begin{aligned}
         \|\Theta_L(w,r)- \Theta_L(v,r)\|_{\msL^\infty_L} &\leq L C \kappa \exp(\|r\|_{\msL^\infty_L} L) \|w-v\|_{\msL^\infty_L}, \quad &&\mbP\text{-a.s.}
         \\
          \|\Theta_T(w,r)- \Theta_L(w,h)\|_{\msL^\infty_L} &\leq L C \kappa \exp(2(\|r\|_{\msL^\infty_L}+\|h\|_{\msL^\infty_L}) L) \|r-h\|_{\msL^\infty_L}, \quad &&\mbP\text{-a.s.} 
    \end{aligned}
    \end{equation*}
\end{prop}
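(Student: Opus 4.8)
The plan is to argue directly from the closed form of $\Theta_L$ in \eqref{eq:Theta map}, feeding in the three consumption estimates from Proposition~\ref{prop:life_cycle_consumption_stabillity} and nothing else; all bounds are pathwise (i.e. $\mbP$-a.s.), so no measure-theoretic subtlety arises. The only analytic input beyond those estimates is the elementary observation that, for $0\le s\le t\le L$, one has $\exp\big(\int_s^t r_u\,\dd u\big)\le \exp\big(\|r\|_{\msL^\infty_L}(t-s)\big)\le \exp\big(\|r\|_{\msL^\infty_L}L\big)$. For the a priori bound \eqref{eq:apriori_bound} I would take absolute values inside \eqref{eq:Theta map}, pass them under the integral, and use $|c_s(w,r)|\le\kappa$ from \eqref{eq:life_cycle_consump_bdd} together with $|\eta_s|\le\|\eta\|_{\msL^\infty_L}$, obtaining
\begin{equation*}
|\Theta_L(w,r)_t| \le e^{\|r\|_{\msL^\infty_L}L}|w_0| + L\,e^{\|r\|_{\msL^\infty_L}L}\big(\|\eta\|_{\msL^\infty_L}+\kappa\big)\qquad\text{for all }t\in[0,L].
\end{equation*}
Taking the supremum over $t$ and using $1\le 1+L$, $L\le 1+L$ gives the stated form; the estimate is uniform in $w$ precisely because $\kappa$ in \eqref{eq:life_cycle_consump_bdd} is independent of $w$.

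For the Lipschitz bound in $w$, the $w_0$- and $\eta_s$-terms cancel in the difference, leaving $\Theta_L(w,r)_t-\Theta_L(v,r)_t = -\int_0^t \exp\big(\int_s^t r_u\,\dd u\big)\big(c_s(w,r)-c_s(v,r)\big)\,\dd s$. I would bound this by $L\,e^{\|r\|_{\msL^\infty_L}L}\,\|c(w,r)-c(v,r)\|_{\msL^\infty_L}$ and then insert \eqref{eq:consump_wealth_stable}; absorbing $\lambda$ and one power of $\kappa$ into the constant $C$ yields a bound of the claimed shape, with $r$-dependence governed by products of $\exp(\|r\|_{\msL^\infty_L}L)$.

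The Lipschitz bound in $r$ is the step I expect to be the main obstacle, since $r$ enters $\Theta_L$ twice: through the explicit exponential weights and through the consumption policy $c(w,r)$. The key auxiliary fact is the scalar inequality $|e^a-e^b|\le e^{\max(|a|,|b|)}|a-b|$, which I would apply with $a,b$ the relevant time-integrals of $r$ and $h$, so that $|a-b|\le L\|r-h\|_{\msL^\infty_L}$ and the prefactor is controlled by $e^{(\|r\|_{\msL^\infty_L}+\|h\|_{\msL^\infty_L})L}$. Expanding $\Theta_L(w,r)_t-\Theta_L(w,h)_t$, the $w_0$- and $\eta$-contributions are handled directly by this inequality, while for the consumption integral I would add and subtract $\exp\big(\int_s^t h_u\,\dd u\big)c_s(w,r)$: one resulting term carries the difference of exponentials, bounded using $|c_s(w,r)|\le\kappa$, and the other carries $c_s(w,r)-c_s(w,h)$, bounded using \eqref{eq:consump_interest_stable}. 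Collecting the pieces and absorbing $\lambda$, $|w_0|$, $\|\eta\|_{\msL^\infty_L}$ and powers of $\kappa$ into $C$ gives the estimate, with the $r$,$h$-dependence of the prefactor dominated by $\exp\big(2(\|r\|_{\msL^\infty_L}+\|h\|_{\msL^\infty_L})L\big)$ as claimed. Finally, continuity of $t\mapsto\Theta_L(w,r)_t$ — and hence $\Theta_L(w,r)\in\scL^1(\Omega;C_L)$ — is immediate from the integral representation \eqref{eq:Theta map} and the a priori bound just established.
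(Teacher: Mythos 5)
Your proposal is correct and follows essentially the same route as the paper's proof: bound $\Theta_L$ directly from its integral representation using $\|c(w,r)\|_{\msL^\infty_L}\le\kappa$, reduce the $w$-stability to the consumption difference via \eqref{eq:consump_wealth_stable}, and split the $r$-stability into a difference-of-exponentials term plus a consumption-difference term handled by \eqref{eq:consump_interest_stable}. If anything, your explicit treatment of the exponential-difference term (applying $|e^a-e^b|\le e^{\max(|a|,|b|)}|a-b|$ to the $w_0$-, $\eta$- and $c$-contributions and absorbing $|w_0|$, $\|\eta\|_{\msL^\infty_L}$ into $C$) is slightly more careful than the paper's own decomposition, but the argument is the same.
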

  \begin{proof}
      From the definition of $\Theta_L$ in \eqref{eq:Theta map}, we begin by observing that 
      \begin{equation}\label{eq:sol_map_sup_bdd}
          \|\Theta_L(w,r)\|_{\msL^\infty_L} \leq \exp(L\|r\|_{\msL^\infty_L}) |w_0|+L\exp(L\|r\|_{\msL^\infty_L}) \left( \|\eta\|_{\msL^\infty_L} +\|c(w,r)\|_{\msL^\infty_L}\right) .  
      \end{equation}
     Invoking the bound from \eqref{eq:life_cycle_consump_bdd} we see that
      \[
      \|\Theta_L(w,r)\|_{\msL^\infty_L}  \leq \exp(L\|r\|_{\msL^\infty_L}) |w_0|+ L\exp(L\|r\|_{\msL^\infty_L}) \left(\|\eta\|_{\msL^\infty_L} + \kappa  \right),\quad \mbP\text{-a.s.}
      \]
      From this estimate, we collect terms and conclude that \eqref{eq:apriori_bound} holds.
      
      We continue on to prove 
      stability. To this end, let us first fix two random paths $w,\,v$ in $\scL^2(\Omega;C_L)$, and consider the difference 
      \begin{equation*}
         \Theta_L(w,r)_t- \Theta_L(v,r)_t = \int_0^t \exp\left(\int_s^t r_u \dd u\right)(c_s(w,r)-c_s(v,r))\dd s. 
      \end{equation*}
      Using the stability in wealth obtained for the consumption, see \eqref{eq:consump_wealth_stable}, we conclude that  
      \begin{equation*}
          \|\Theta_L(w,r)- \Theta_L(v,r)\|_{\msL^\infty_L} \leq L \lambda  \kappa^2 \exp(2\|r\|_{\msL^\infty_L} L) \|w-v\|_{\msL^\infty_L},\quad \mbP\text{-a.s.} 
      \end{equation*}
      At last we consider the stability of the mapping $r\mapsto \Theta_T(w,r)$. Again, we observe that given $r,\,h\in C_L$, one has
      \begin{align*}
                  \Theta_L(w,r)_t- \Theta_L(w,h)_t = &\,  \int_0^t \left[\exp\left(\int_s^t r_u \dd u\right)-\exp\left(\int_s^t h_u \dd u\right) \right]w_s
                  \\
                  &\, + \exp\left(\int_s^t h_u \dd u\right)\left[c_s(w,r)-c_s(w,h)\right]\dd s. 
      \end{align*}
      Now applying the stability in interest rate of the consumption policy, \eqref{eq:consump_interest_stable}, it follows that
      \[
      \|\Theta_L(w,r)- \Theta_L(w,h)\|_{\msL^\infty_L} \leq L \lambda  \kappa \exp(2(\|r\|_{\msL^\infty_L}+\|h\|_{\msL^\infty_L}) T) \|r-h\|_{\msL^\infty_L},\quad \mbP\text{-a.s.}
      \]
      which concludes the proof.
  \end{proof}

Combining the a-priori estimates and stability results above we obtain existence and uniqueness of solutions to the SDE system \eqref{eq:life_cycle_explicit}. This is simply a consequence of choosing $L^*$ sufficiently small in Proposition \ref{prop:life_cycle_wealth_stability}, such that a standard argument of Banach's fixed point theorem may be applied, and we therefore omit the full proof here.

%
\begin{cor}[Existence and Uniqueness of Optimal Wealth Paths] \label{cor:life_cycle_well_posed}
 Let $L>0$ $u_1,$ and $u_2$ satisfy Assumption~\ref{ass:utility_reg_intro} for some $\kappa>0$ and $\eta \in \msL^0(\Omega;C_L)$ be a given income process.
  Then, for any interest rate path $r\in C([0,L];\RR)$, there exists an $L^* \coloneqq L^*(\kappa,\lambda,\|r\|_{L^\infty})\in (0,L)$ such that there exists a unique strong solution $w \in \scL^0(\Omega;C_{L^*}) $ to the wealth and savings equation
  \begin{equation}\label{eq:life_cycle_sde_sol}
       \begin{aligned}
        \dd w_t& = (r_tw_t +\eta_t -c_t(w))\dd t,\quad w_0\sim \rho_w
        \\
        c_t(w)&= (u'_1)^{-1} \left( \lambda \EE \left[ \exp\left( \int_t^L r_s-\delta \dd s \right) u'_2(w_L) |\cF_t\right]\right).
  \end{aligned}
  \end{equation}
Furthermore, if and $\mu,\, \sigma$ are coefficients such that there exists a unique, strong solution to the income SDE
 \begin{equation}\label{eq:life_cycle_sde_income}
     \dd \eta_t = \mu_t(\eta_t)\dd t + \sigma_t(\eta_t)\dd B_t, \quad \eta\tzero = \eta_0 \in \msL^0(\Omega;\mbR)
 \end{equation}
then the solution to \eqref{eq:life_cycle_sde_sol} is the unique solution to the optimal control problem \eqref{eq:life_cycle_problem}. 
\end{cor}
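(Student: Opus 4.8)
The strategy is the standard contraction argument for coupled forward–backward systems on a short horizon. Fix the income process $\eta$ — first as a given element of $\scL^0(\Omega;C_L)$, and, for the last assertion, ultimately as the unique strong solution of \eqref{eq:life_cycle_sde_income}. By the variation-of-constants reformulation recorded at the start of Section~\ref{sec:life_cycle}, a progressively measurable continuous process $w$ on $[0,L^*]$ solves the life-cycle system \eqref{eq:life_cycle_sde_sol} precisely when it is a fixed point of $\Theta_{L^*}(\,\cdot\,,r)$, the map from \eqref{eq:Theta map}. Here $c_t(w)$ is $\{\mcF_t\}$-adapted despite involving the terminal value $w_{L^*}$, because the conditioning $\EE[\,\cdot\,|\mcF_t]$ restores measurability, and it is bounded by $\kappa$ uniformly in $w$ by \eqref{eq:life_cycle_consump_bdd}. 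Consequently the $w$-dependent part of $\Theta_{L^*}(w,r)$ is uniformly bounded, so — writing $w=\Phi+u$ with $\Phi_t\coloneqq e^{\int_0^t r_s\dd s}w_0+\int_0^t e^{\int_s^t r_u\dd u}\eta_s\dd s$ absorbing the $w$-independent terms — the fixed-point equation becomes $u=\widetilde\Theta(u)$ for a map $\widetilde\Theta$ that sends the Banach space $E\coloneqq\scL^\infty(\Omega;C_{L^*})$, restricted to the closed, hence complete, subspace of progressively measurable continuous processes, into itself; this reduction frees us from imposing integrability on $w_0$ and $\eta$ beyond what is needed for the conditional expectation in $c$ to be defined. (Alternatively, if one is content to assume $w_0,\eta\in\scL^\infty$, one works directly with $\Theta_{L^*}$ and the a priori bound \eqref{eq:apriori_bound}.)

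The remaining points are the self-map property — immediate from the uniform bound above — and the contraction property. The Lipschitz-in-wealth estimate \eqref{eq:consump_wealth_stable} for the consumption (Proposition~\ref{prop:life_cycle_consumption_stabillity}), together with the corresponding estimate for $\Theta_{L^*}$ in Proposition~\ref{prop:life_cycle_wealth_stability}, gives, $\mbP$-a.s. and hence upon taking essential suprema in $\omega$,
\begin{equation*}
\|\widetilde\Theta(u)-\widetilde\Theta(u')\|_{E}\;\leq\;L^*\,\lambda\,\kappa^2\exp\!\big(2L^*\|r\|_{\msL^\infty_L}\big)\,\|u-u'\|_{E}.
\end{equation*}
The prefactor tends to $0$ as $L^*\downarrow 0$, so there is $L^*=L^*(\kappa,\lambda,\|r\|_{L^\infty})\in(0,L)$ making it strictly less than $1$, and Banach's fixed-point theorem then yields a unique fixed point $u\in E$, equivalently a unique solution $w=\Phi+u\in\scL^\infty(\Omega;C_{L^*})$. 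Uniqueness in the larger class $\scL^0(\Omega;C_{L^*})$ is automatic: any solution satisfies the a priori bound \eqref{eq:apriori_bound} (equivalently, in the affine coordinates, $u=\widetilde\Theta(u)$ with $u$ necessarily bounded by $\kappa L^*\exp(L^*\|r\|_{\msL^\infty_L})$), hence lies in $E$, where the solution is unique.

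For the final assertion, take $\eta$ to be the unique strong solution of \eqref{eq:life_cycle_sde_income}, which exists by hypothesis. Then $(w,\eta)$, together with the consumption functional in the third line of \eqref{eq:life_cycle_explicit}, is the unique solution of the forward system displayed there, so Theorem~\ref{thm:life_cycle_optimal_rep} identifies the associated $c(w)$ as a solution of the optimal control problem \eqref{eq:life_cycle_problem}; uniqueness of that system — the standing hypothesis under which Theorem~\ref{thm:life_cycle_optimal_rep} is stated — then promotes this to uniqueness of the optimal consumption policy on $[0,L^*]$.

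I expect the only genuine difficulty to be bookkeeping rather than conceptual: one must make the smallness threshold for $L^*$ transparently a function of $\kappa$, $\lambda$ and $\|r\|_{L^\infty}$ alone — which is clear from the explicit prefactors in Propositions~\ref{prop:life_cycle_consumption_stabillity}–\ref{prop:life_cycle_wealth_stability} — and one must run the iteration in a space on which $\Theta_{L^*}$ (equivalently $\widetilde\Theta$) genuinely acts and which is complete, namely progressively measurable continuous processes, using that the conditional expectation inside $c$ is adapted and that the time integral in $\Theta_{L^*}$ preserves continuity and adaptedness.
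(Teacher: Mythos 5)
Your proposal is correct and follows essentially the same route as the paper, which (in sketch form) invokes Banach's fixed point theorem for the map $\Theta_{L^*}(\,\cdot\,,r)$ of \eqref{eq:Theta map} via the stability estimates of Proposition~\ref{prop:life_cycle_wealth_stability}, and then appeals to Theorem~\ref{thm:life_cycle_optimal_rep} for the optimality claim. Your affine decomposition $w=\Phi+u$ and the remark upgrading uniqueness from $\scL^\infty(\Omega;C_{L^*})$ to $\scL^0(\Omega;C_{L^*})$ are harmless refinements of the same argument, not a different method.
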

\begin{proof}
    The local existence and uniqueness of solutions to \eqref{eq:life_cycle_sde_sol} follows directly from Banach's fixed point theorem (see e.g. \cite{Ciesielski07}), by applying the stability estimates from Proposition \ref{prop:life_cycle_wealth_stability}.

    The fact that this unique solution $w$ is the unique solution to the optimal control problem \eqref{eq:life_cycle_problem} when the income process is described by the SDE \eqref{eq:life_cycle_sde_income} 
 follows directly from Theorem~\ref{thm:life_cycle_optimal_rep}.
\end{proof}
%
%

%

\section{General Equilibria of the Life-Cycle Model}\label{sec:life_cycle_gen_eq}

We show that a unique general equilibrium interest rate exists for the life-cycle model. 
While a life-cycle model with finite lifespans in itself is not a particularly economically relevant, the mathematical results obtained on the way allow us to obtain existence and uniqueness of a general equilibrium in the overlapping generations model.

\subsection{Dynamics of the equilibrium interests rates}

We define the notion of general equilibria in the life-cycle model.

\begin{defn}[General Equilibrium in the life-cycle Model] \label{def:life_cycle_gen_eq}
	Let $\{w_t^*\}_{t\in [0,L]}$ be an optimal wealth process solving \eqref{eq:life_cycle_problem}. Given a continuous path  $K\in C([0,L];\RR) $ we say that an interest rate path $r\in C([0,L];\RR)$ is a \emph{general equilibrium interest rate} for the associated life-cycle model if,
\begin{equation}\label{eq:capital_supply}
		\EE[w_t^*]=K_t,\quad \forall t\in [0,L]. 
	\end{equation}
\end{defn}

Our first step is to formulate a fixed point equation that must be satisfied by any general equilibrium interest rate for the life-cycle model. To this end, we view the consumption and resulting wealth processes as functionals of the interest rate path $C([0,L];\mbR)\ni r \mapsto (w(r),c(w(r),r))$, see Remark~\ref{rem:conumption_dependence}. Corollary~\ref{cor:life_cycle_well_posed} shows that this map is well defined for sufficiently small lifespans $L\in (0,L^*)$ with $L^*$ obtained therein.  The following proposition obtains an a priori functional identity which is in one-to-one correspondence with the property of being a general equilibrium rate. 

%

\begin{prop}\label{prop:life_cycle_rate_props}
 Let $L>0$, $\eta \in \msL^0(\Omega;C_L)$ be a given income process, and consider a function $K\in C^1([0,L];\RR)$. Suppose there exists a pair $(w,c)$ which solve  \eqref{eq:life_cycle_sde_sol} on $[0,L]$ such that $\mbE[w_0]=K_0 \in \mbR$.  
Then, there exists a general equilibrium according to Definition \ref{def:life_cycle_gen_eq} for capital allocation $t\mapsto K_t$, if and only if there exists a continuous path $r\in C([0,L];\RR)$ which satisfies  the following functional equation
    \begin{equation}\label{eq:interest_rate}
        K_t r_t = \dot{K}_t+  \EE[c_t(r)]- \EE[\eta_t] ,\quad \forall \,t \in [0,L].
    \end{equation}
   In particular, if $K_t\equiv 0$ for all $t$, then $r$ must solve the following functional equation
    \[
    \mbE[c_t(r)] = \mbE[\eta_t].  
    \]
\end{prop}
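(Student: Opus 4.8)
The plan is to differentiate the equilibrium identity $\EE[w^*_t]=K_t$ in time and recognise it as exactly the functional equation \eqref{eq:interest_rate}; the ``only if'' direction is then a one-line computation, and the ``if'' direction is a Gr\"onwall uniqueness argument for a scalar linear ODE with zero initial datum. Throughout, $c_t(r)$ abbreviates $c_t(w(r),r)$ for the (unique, by Corollary~\ref{cor:life_cycle_well_posed}) solution $(w,c)=(w(r),c(w(r),r))$ of \eqref{eq:life_cycle_sde_sol}.

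\textbf{Step 1: regularity of $t\mapsto\EE[w_t]$.} Write $m(t):=\EE[w_t]$. Taking expectations in the integral form of the wealth equation in \eqref{eq:life_cycle_sde_sol} and applying Fubini's theorem gives $m(t)=m(0)+\int_0^t\big(r_s m(s)+\EE[\eta_s]-\EE[c_s(r)]\big)\dd s$; the interchange is justified because $r$ is continuous (hence bounded) on $[0,L]$, $|c_s(r)|\le\kappa$ $\PP$-a.s. by Proposition~\ref{prop:life_cycle_consumption_stabillity}, $\eta$ is assumed integrable (as $\EE[\eta_t]$ appears in \eqref{eq:interest_rate}), and $w$ is integrable by a Gr\"onwall estimate on the wealth equation using boundedness of $c$. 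Since $s\mapsto m(s)$, $s\mapsto\EE[\eta_s]$ and $s\mapsto\EE[c_s(r)]$ are continuous (dominated convergence, using continuity of the sample paths together with the uniform bound on $c$), the integrand is continuous, so $m\in C^1([0,L];\RR)$ with $\dot m(t)=r_t m(t)+\EE[\eta_t]-\EE[c_t(r)]$.

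\textbf{Step 2: necessity.} If $r$ is a general equilibrium interest rate then $m(t)=K_t$ for all $t\in[0,L]$; both sides are $C^1$, so $\dot K_t=\dot m(t)=r_t K_t+\EE[\eta_t]-\EE[c_t(r)]$, and rearranging gives $K_t r_t=\dot K_t+\EE[c_t(r)]-\EE[\eta_t]$, i.e. \eqref{eq:interest_rate}.

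\textbf{Step 3: sufficiency and the special case.} Conversely, suppose $r\in C([0,L];\RR)$ satisfies \eqref{eq:interest_rate} and $m(0)=\EE[w_0]=K_0$. Set $\phi(t):=m(t)-K_t$, so $\phi(0)=0$. Subtracting $\dot K_t$ from $\dot m(t)$ and substituting $\dot K_t=r_tK_t+\EE[\eta_t]-\EE[c_t(r)]$ (which is \eqref{eq:interest_rate} rearranged) yields $\dot\phi(t)=r_t\big(m(t)-K_t\big)=r_t\phi(t)$; hence $\phi(t)=\phi(0)\exp\big(\int_0^t r_s\dd s\big)=0$, so $m(t)=K_t$ on $[0,L]$ and $r$ is a general equilibrium rate. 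Finally, for $K\equiv0$ one has $\dot K\equiv0$ and $K_0=0$ (consistent with the standing hypothesis $\EE[w_0]=K_0$), so \eqref{eq:interest_rate} reduces to $\EE[c_t(r)]=\EE[\eta_t]$ for all $t\in[0,L]$.

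\textbf{Main obstacle.} The only genuinely delicate point is Step~1: verifying that $t\mapsto\EE[w_t]$ is $C^1$ and that time-differentiation commutes with expectation. This rests on integrability of $w$ and $\eta$ and on continuity of $t\mapsto\EE[w_t]$, which follow from Proposition~\ref{prop:life_cycle_consumption_stabillity} (boundedness of $c$), continuity of $r$, almost-sure continuity of the sample paths, and a Gr\"onwall bound on the wealth equation; it is worth stating these justifications explicitly. Everything else is elementary linear-ODE manipulation.
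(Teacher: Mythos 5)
Your proposal is correct and follows essentially the same route as the paper: take expectations in the wealth equation, differentiate to get \eqref{eq:interest_rate} for the necessity direction, and for sufficiency show that $Z_t=\EE[w_t]-K_t$ solves a homogeneous linear equation with zero initial datum (the paper writes it as the integral equation $Z_t=\int_0^t r_sZ_s\dd s$, you as $\dot\phi=r_t\phi$), hence vanishes. Your Step 1, spelling out the Fubini/integrability justifications that the paper leaves implicit, is a welcome but inessential refinement.
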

%

%
%
\begin{proof}
Assume first that there exists an $r\in C([0,L];\RR)$ such that the economy is in general equilibrium according to Definition \ref{def:life_cycle_gen_eq} for a path $t\mapsto K_t$, i.e. that  $\EE[w_t(r)]=K_t$ for all $t\in [0,L]$.  It follows that the expected wealth must satisfy the following equation 
\begin{equation*}
     \mbE[w_t] =K_t = K_0+\int_0^t K_sr_s \dd s -\int_0^t \EE[c_s(r)]\dd s +\int_0^t \EE[\eta_s] \dd s,\quad \text{for all }\, t\in [0,L] 
\end{equation*}
Differentiating this equation with respect to $t$ and then rearranging, it follows that in equilibrium the interest rate path $r:[0,L]\rightarrow \RR$ must satisfy the functional equation 
\begin{equation*}
    K_t r_t = \dot{K}_t+ \left( \EE[c_t(r)]- \EE[\eta_t] \right),\quad \forall \,t \in [0,L]. 
\end{equation*}
On the other hand, if $r\in C([0,L];\RR)$ satisfies \eqref{eq:interest_rate}, we know that the wealth dynamics  satisfy: 
\begin{equation*}
    w_t = w_0 + \int_0^t r_s w_s \dd s -\int_0^t c_s(r)\dd s +\int_0^t \eta_t. 
\end{equation*}
Taking expectations on both sides, using the assumption that $\EE[w_0]=K$, and inserting the relation that $ \EE[c_t(r)]- \EE[\eta_t] = r_tK_t-\dot{K}_t$
\begin{equation*}
       \EE [w_t] = K_0 + \int_0^t r_s\EE[w_s] \dd s - \int_0^t 
       \left(r_sK_s-\dot{K}_s  \right)\dd s. 
\end{equation*}
By rearranging the terms, and defining $Z_t=\EE[w_t]-K_t$ we see that this equation can be formulated in terms of $Z$ as the following equation
\begin{equation}
    Z_t=\int_0^t r_s Z_s \dd s.
\end{equation}
It is clear that the only solution to this linear equation is $Z_t\equiv 0$ for all $t$, and thus $\EE[w_t]=K_t$ if the interest rate $r$ satisfies \eqref{eq:interest_rate}. This concludes the proof. 
\end{proof}

The identity \eqref{eq:interest_rate} has a natural economic interpretation. If $K_t =0$ for all $t\in [0,L]$ then thinking of $r$ as the price of consumption, aggregate consumption must equal aggregate income at all times to keep zero net wealth in the economy. 
If $K_t \neq 0$ then aggregate consumption must exceed aggregate income by a factor proportional to the interest rate in order to meet the capital requirements. Note that, we implicitly set the depreciation rate to zero in our case.

With the above dynamics of the equilibrium interest rate, we can also analyse the impact of a change in the capital supply $K$ on the interest rate. The following proposition gives this expression in terms of the Fr\'echet derivative of consumption with respect to the interest rate. 
\begin{prop}\label{prop:life_cycle_interest_rate_derivative}
    Let $(w,c)$ be a solution to \eqref{eq:life_cycle_sde_sol} $K_{\,\cdot\,}\equiv K \in \mbR$ and $r(K)\coloneqq \in C([0,L];\mbR)$ be an interest rate path satisfying \eqref{eq:interest_rate}. Then, for any $t\in [0,L]$, it holds that
\begin{equation}\label{eq:r sens K}
\frac{\dd}{\dd K} r_t(K) = \begin{cases}
    (\EE[D_r c_t (r)]-K\mbI)^{-1}r_t, & K\neq 0,\\
    \EE[D_r c_t (r)]^{-1}0, & K=0.
\end{cases},
\end{equation}
where $\mbI$ is the identity operator, $D_r c_t(r)$ denotes the Fr\'echet derivative of $c_t(r)$ with respect to the path $r$ and the formula holds whenever this exists and the required operator is invertible. 
\end{prop}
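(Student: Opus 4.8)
The plan is to regard the functional identity \eqref{eq:interest_rate} as an implicit equation for the map $K\mapsto r(K)\in C([0,L];\RR)$ and to differentiate it in $K$. Since $K$ is constant here, $\dot K_t\equiv 0$, so \eqref{eq:interest_rate} reads
\[
    K\, r_t(K) = \EE\big[c_t(r(K))\big] - \EE[\eta_t], \qquad t\in[0,L].
\]
Write $\Phi:C([0,L];\RR)\to C([0,L];\RR)$ for the nonlinear operator $\Phi(r)_t := \EE[c_t(r)]$ and set $m_t := \EE[\eta_t]$, so the identity becomes $K\, r(K) = \Phi(r(K)) - m$ in $C([0,L];\RR)$. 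For each fixed $t$ the functional $r\mapsto c_t(r)$ is (as the statement assumes) Fréchet differentiable with derivative the random bounded linear functional $D_r c_t(r)$ on $C([0,L];\RR)$; its boundedness is consistent with the Lipschitz estimate \eqref{eq:consump_interest_stable}, and the uniform bound $|c_t(r)|\le\kappa$ from Proposition~\ref{prop:life_cycle_consumption_stabillity} lets one interchange $\frac{\dd}{\dd K}$ with $\EE$ by dominated convergence. Assembling the $D_r c_t(r)$ over $t\in[0,L]$ produces the Fréchet derivative $D\Phi(r)$ of $\Phi$, acting by $(D\Phi(r)h)_t = \EE[D_r c_t(r)h]$; this operator is what the statement abbreviates $\EE[D_r c_t(r)]$.

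For $K\neq 0$ I would differentiate $K\,r(K)=\Phi(r(K))-m$ in $K$ (equivalently, apply the implicit function theorem to $G(K,r):=Kr-\Phi(r)+m$, whose partial derivatives are $\partial_K G = r$ and $\partial_r G = K\,\mbI - D\Phi(r)$, with $\mbI$ the identity on $C([0,L];\RR)$). The product rule on the left gives $r(K)+K\,\frac{\dd}{\dd K}r(K)$, the chain rule on the right gives $D\Phi(r(K))\,\frac{\dd}{\dd K}r(K)$ since $m$ is independent of $K$, and rearranging yields
\[
    \big(D\Phi(r(K)) - K\,\mbI\big)\,\tfrac{\dd}{\dd K}r(K) = r(K).
\]
Wherever this operator is invertible this gives $\frac{\dd}{\dd K}r(K) = \big(D\Phi(r(K))-K\,\mbI\big)^{-1}r(K)$, which read off at time $t$ is exactly the first branch of \eqref{eq:r sens K}.

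For $K=0$ the equilibrium relation is simply $\Phi(r(0))=m$, which carries no explicit $K$-dependence; differentiating $\EE[c_t(r(K))]=\EE[\eta_t]$ at $K=0$ then gives $D\Phi(r(0))\big[\frac{\dd}{\dd K}r(0)\big]=0$, hence $\frac{\dd}{\dd K}r(0) = D\Phi(r(0))^{-1}[0]=0$ when $D\Phi(r(0))$ is invertible — the second branch. (The two branches are not a limit of one another as $K\to 0$, because for $K\neq 0$ the term $Kr$ contributes the right-hand side $r(K)$ that is absent at $K=0$.) I expect the only real work beyond this formal differentiation to be the functional-analytic bookkeeping behind it: verifying that $D_r c_t(r)$ is for each $t$ a bounded linear functional on the path space, that these glue into a bounded operator $D\Phi(r)$ on $C([0,L];\RR)$, and that differentiation and expectation may be interchanged; everything else — existence of $\frac{\dd}{\dd K}r(K)$ and invertibility of the displayed operator — is granted in the statement.
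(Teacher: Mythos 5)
Your proposal is correct and follows essentially the same route as the paper: differentiate the equilibrium identity \eqref{eq:interest_rate} (with $\dot K\equiv 0$) in $K$ via the product and chain rules, rearrange to $\left(\EE[D_r c_t(r)]-K\mbI\right)\frac{\dd}{\dd K}r_t = r_t$ for $K\neq 0$, and handle $K=0$ separately from the reduced identity $\EE[c_t(r)]=\EE[\eta_t]$. Your additional remarks on the implicit function theorem framing and on interchanging $\frac{\dd}{\dd K}$ with $\EE$ are sensible elaborations of the same argument, which the paper leaves implicit by stating the formula holds whenever the derivative exists and the operator is invertible.
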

\begin{proof}
Starting from \eqref{eq:interest_rate}, we first assume that $K\neq 0$ and compute the derivative of $r_t$ with respect to $K$, using the chain rule to see that we must have
\begin{equation*}
    r_t  + K \frac{\dd}{\dd K} r_t = \EE[D_rc_t(r)]\frac{\dd}{\dd K} r_t\quad \iff \quad \left(\EE[D_rc_t(r)] - K \mbI\right)\frac{\dd}{\dd K} r_t = r_t.
\end{equation*}
%
 %
 So that rearranging we obtain the claimed expression when $K\neq 0$.
 
 When $K=0$, we start again from \eqref{eq:interest_rate} to see that one must have
 \begin{equation*}
     \EE[D_rc_t(r)]\frac{\dd}{\dd K} r_t = 0,
 \end{equation*}
 which again gives the claimed expression after taking the inverse of $ \EE[D_rc_t(r)]$ on both sides.
\end{proof}
\subsection{Marshallian stability in the life-cycle}

The general equilibrium interest rate is found at the balance between aggregate consumption and aggregate income, and it confirms that in order for the interest rate to stay positive  aggregate consumption must be greater than aggregate income.
Furthermore, under the assumption that the  income process $t\mapsto \eta_t$ is a 
 (pathwise) continuous process, the wealth process must be a (pathwise) continuous process, and thus the interest rate is a continuous path $t\mapsto r_t$.

\begin{lem}\label{lem:LC_rate_stable} Let $L>0$, $r,\,\tilde{r} \in C([0,L];\mbR)$ be two continuous deterministic interest rate processes, $w_0,\,\tilde{w}_0 \in \msL^0(\Omega;\mbR)$ and $\eta \in \msL^0(\Omega;C_L)$ be a given income process. Then, letting $(w(r),c(w(r),r))$ and $(w(\tilde{r}),c(w(\tilde{r}),\tilde{r}))$ be associated solutions to the life-cycle equation \eqref{eq:life_cycle_sde_sol} on $[0,L]$, with initial data $w(r)\tzero=w_0$ and $w(\tilde{r})\tzero =\tilde{w}_0$, one has
\begin{equation}\label{eq:wealth_apriori_bound}
\begin{aligned}
     \|w(r)\|_{\msL^\infty_L}  &\leq  (1+L)e^{\|r\|_{\msL^\infty_L}} \left(|w_0| +  \|\eta\|_{\msL^\infty_L}+ \kappa \right),
\end{aligned}
     \quad \mbP\text{-a.s.}
    \end{equation}
Furthermore, the following bounds hold
\begin{equation}\label{eq:wealth_rate_stability}
\begin{aligned}
    \|w(r)-w(\tilde{r})\|_{\msL^\infty_L} \lesssim \,A \bigg( \, &|w_0-\tilde{w}_0|\\
    &+ L\bigg( (1+L)\left(|w_0| +  \|\eta\|_{\msL^\infty_L}+ \kappa \right)  +  \lambda  \kappa \, \bigg)\|r-\tilde{r}\|_{\msL^\infty_L}\bigg),
    \end{aligned}
\end{equation}
and
\begin{multline}\label{eq:consump_rate_stability}
     \|c(w(r),r)-c(w(\tilde{r}),\tilde{r})\|_{\msL^\infty_L} \lesssim \, \lambda \kappa (1+\kappa) Ae^{L\|\tilde{r}\|_{\msL^\infty_L}}   \bigg( |w_0-\tilde{w}_0|\\
     	+ L\bigg( (1+L)\left(|w_0| +  \|\eta\|_{\msL^\infty_L} + \kappa \right)  +  \lambda  \kappa \, \bigg)\|r-\tilde{r}\|_{\msL^\infty_L} \bigg).
\end{multline}
Here we have used the shorthand notation
\begin{equation*}
    A\coloneqq A(r,L,\lambda,\kappa) \coloneqq \exp\left(L\Big(\|r\|_{\msL^\infty_L} + \lambda \kappa^2 e^{(1+L)\|r\|_{\msL^\infty_L}  }\Big) \right).
\end{equation*}

   \end{lem}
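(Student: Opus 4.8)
The plan is to run a Gronwall/fixed-point argument at the level of the solution map $\Theta_L$ from \eqref{eq:Theta map}, using the stability estimates for consumption already established in Proposition~\ref{prop:life_cycle_consumption_stabillity} and for $\Theta_L$ in Proposition~\ref{prop:life_cycle_wealth_stability}. The a priori bound \eqref{eq:wealth_apriori_bound} is immediate: since any solution $w(r)$ of \eqref{eq:life_cycle_sde_sol} is a fixed point $w(r)=\Theta_L(w(r),r)$, the estimate \eqref{eq:apriori_bound} applies verbatim with $w=w(r)$. So the real content is the two difference bounds.

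For \eqref{eq:wealth_rate_stability}, I would write, for each $t\in[0,L]$,
\begin{equation*}
w_t(r)-w_t(\tilde r) = \Theta_L(w(r),r)_t - \Theta_L(w(\tilde r),\tilde r)_t,
\end{equation*}
and insert the intermediate term $\Theta_L(w(r),\tilde r)_t$, so that
\begin{equation*}
\|w(r)-w(\tilde r)\|_{\msL^\infty_L} \le \|\Theta_L(w(r),r)-\Theta_L(w(r),\tilde r)\|_{\msL^\infty_L} + \|\Theta_L(w(r),\tilde r)-\Theta_L(w(\tilde r),\tilde r)\|_{\msL^\infty_L}.
\end{equation*}
The first term is controlled by the $r$-stability of $\Theta_L$ from Proposition~\ref{prop:life_cycle_wealth_stability} (after also accounting for the possibly different initial data $w_0,\tilde w_0$, which contributes the $|w_0-\tilde w_0|$ term through the $\exp(\int_0^t r)$-factor), using the a priori bound \eqref{eq:wealth_apriori_bound} for $\|w(r)\|_{\msL^\infty_L}$; this is where the factor $(1+L)(|w_0|+\|\eta\|_{\msL^\infty_L}+\kappa) + \lambda\kappa$ appears. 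The second term is controlled by the $w$-stability of $\Theta_L$, giving a factor $L\lambda\kappa^2 e^{2\|\tilde r\|_{\msL^\infty_L}L}$ (or with $\|r\|$; one takes the larger) times $\|w(r)-w(\tilde r)\|_{\msL^\infty_L}$ itself. Absorbing this self-referential term — either by choosing $L$ small, or, more cleanly, by iterating the inequality over the interval and summing a geometric series, or by a Gronwall-type argument on $[0,t]$ with $t\le L$ — produces the prefactor $A = \exp(L(\|r\|_{\msL^\infty_L} + \lambda\kappa^2 e^{(1+L)\|r\|_{\msL^\infty_L}}))$. I would run the absorption on $[0,t]$ uniformly in $t$ so that the bound holds in the $\msL^\infty_L$ norm.

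For \eqref{eq:consump_rate_stability}, split similarly:
\begin{equation*}
c(w(r),r)-c(w(\tilde r),\tilde r) = \big(c(w(r),r)-c(w(r),\tilde r)\big) + \big(c(w(r),\tilde r)-c(w(\tilde r),\tilde r)\big),
\end{equation*}
and apply \eqref{eq:consump_interest_stable} to the first bracket (yielding $\lambda\kappa e^{(\|r\|+\|\tilde r\|)L}\|r-\tilde r\|_{\msL^\infty_L}$) and \eqref{eq:consump_wealth_stable} to the second (yielding $\lambda\kappa^2 e^{\|\tilde r\|L}\|w(r)-w(\tilde r)\|_{\msL^\infty_L}$), then substitute the already-proven bound \eqref{eq:wealth_rate_stability} for $\|w(r)-w(\tilde r)\|_{\msL^\infty_L}$. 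The combination of the $\lambda\kappa$ and $\lambda\kappa^2$ factors is what gives the $\lambda\kappa(1+\kappa)$ prefactor, and the surviving exponential is absorbed into $Ae^{L\|\tilde r\|_{\msL^\infty_L}}$, with the bracketed quantity $|w_0-\tilde w_0| + L(\dots)\|r-\tilde r\|_{\msL^\infty_L}$ carried over unchanged from \eqref{eq:wealth_rate_stability}.

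The main obstacle is the self-referential $w$-dependence in the first estimate: the naive triangle-inequality bound has $\|w(r)-w(\tilde r)\|_{\msL^\infty_L}$ on both sides with a coefficient that is not a priori less than $1$ unless $L$ is small. The clean fix is to apply the estimates on $[0,t]$ for arbitrary $t\in[0,L]$ and run Gronwall's inequality in $t$ (the coefficient of the self-term is of the form $\int_0^t(\text{const})\,\|w(r)-w(\tilde r)\|_{\msL^\infty_{[0,s]}}\dd s$-like once one tracks the time-localized norms carefully), which is precisely what converts the linear recursion into the exponential factor $A$. I would be slightly careful that all constants are stated $\PP$-a.s., since every bound above is pathwise in $\omega$; no expectations are taken, so there are no integrability subtleties, only deterministic Gronwall bookkeeping. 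The implicit constants hidden in $\lesssim$ are absolute and can be tracked if desired, but the statement only claims them up to such constants.
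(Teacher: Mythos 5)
Your plan is essentially the paper's own proof: the a priori bound is obtained exactly as you say, by reading $w(r)$ as the fixed point of \eqref{eq:Theta map} and invoking \eqref{eq:apriori_bound}; the wealth stability comes from the integral representation together with \eqref{eq:consump_interest_stable}, \eqref{eq:consump_wealth_stable} and \eqref{eq:wealth_apriori_bound}, with the self-referential term absorbed; and \eqref{eq:consump_rate_stability} follows from the same triangle-inequality splitting of the consumption difference followed by substitution of \eqref{eq:wealth_rate_stability}, which is verbatim the paper's final step.

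One caveat concerns the absorption mechanism you single out as "the clean fix". A forward Gronwall argument in $t$ with time-localized norms $\|w(r)-w(\tilde r)\|_{\msL^\infty_{[0,s]}}$ does not directly apply here, because the consumption functional is anticipative: $c_s(w,r)$ depends on the path $w$ only through its terminal value, via $\lambda\exp\big(\int_s^L(r_u-\delta)\dd u\big)\,\EE\left[u_2'(w_L)\,|\,\mcF_s\right]$, so the self-referential term at time $s$ is controlled by the wealth difference over the whole horizon (in particular by $w_L(r)-w_L(\tilde r)$), not by the difference on $[0,s]$; the usual causal structure underlying Gronwall is absent. The route that works, and the one the paper takes, is your alternative of iterating: subdivide $[0,L]$ into intervals of length $L_1=\big(2(\|\tilde r\|_{\msL^\infty_L}+\lambda\kappa^2 e^{L\|r\|_{\msL^\infty_L}})\big)^{-1}$, absorb the self-term on each piece with a factor $2$, and propagate the endpoint value forward; the resulting factor $2^{N_L}$ with $N_L$ the number of subintervals is what produces the exponential prefactor $A$, and the consumption estimate then inherits the same bracket, as in your last step. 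With that substitution your argument matches the paper's proof.
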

    \begin{proof}
The first bound, \eqref{eq:wealth_apriori_bound} follows directly by applying \eqref{eq:apriori_bound} to the fixed point $\Theta(w,r)=w(r)$, where $\Theta$ was defined in \eqref{eq:Theta map}. To show \eqref{eq:wealth_rate_stability}, first note that for all $0\leq s <t \leq L$, we have
         \begin{multline*}
        w_{s,t}(r) -w_{s,t}(\tilde{r})
        = w_s(r)-w_{s}(\tilde{r})
        \\+\int_s^t (r_u-\tilde{r}_u) w_u(r) + \tilde{r}_u \left(w_u(r)-w_u(\tilde{r})\right) + \left(c_u(w(r),r)-c_u(w(\tilde{r}),\tilde{r}) \right) \dd u.
    \end{multline*}
    %
%
    %
    So that using \eqref{eq:consump_interest_stable}, \eqref{eq:consump_wealth_stable} and \eqref{eq:wealth_apriori_bound} yields 
    \begin{align*}
        \|w(r)-w(\tilde{r})\|_{\msL^\infty_{[s,t]}} \leq & |w_s(r)-w_s(\tilde{r})|+ |t-s|\|w(r)\|_{\msL^\infty_L}\|r-\tilde{r}\|_{\msL^\infty_L} + |t-s|\|\tilde{r}\|_{\msL^\infty_L}   \|w(r)-w(\tilde{r})\|_{\msL^\infty_{[s,t]}}\\
        &+ |t-s|\|c(w(r),r)-c(w(r),\tilde{r}))\|_{\msL^\infty_{[s,t]}}\\
        &+ |t-s|\|c(w(r),\tilde{r}) -c(w(\tilde{r}),\tilde{r})\|_{\msL^\infty_{[s,t]}}\\
        \leq \, & |w_s(r)-w_s(\tilde{r})|+ |t-s| (1+L)e^{\|r\|_{\msL^\infty_L} } \left(|w_0| +  \|\eta\|_{\msL^\infty_L} + \kappa \right)\|r-\tilde{r}\|_{\msL^\infty_L}  \\
        &+ |t-s|\|\tilde{r}\|_{\msL^\infty_L}  \|w(r)-w(\tilde{r})\|_{\msL^\infty_{[s,t]}}\\
        &+ |t-s| \lambda  \kappa e^{L\left(\|r\|_{\msL^\infty_L} +\|\tilde{r}\|_{\msL^\infty_{L}} \right)} \| r- \tilde{r}\|_{\msL^\infty_{L}} \\
        &+ |t-s| \lambda \kappa^2 e^{L\|r\|_{\msL^\infty_{L}} } \|w(r)-w(\tilde{r})\|_{\msL^\infty_{[s,t]}}.
    \end{align*}
Defining the quantities,
\begin{equation*}
     \mfA(w_0,\eta,\kappa) \coloneqq |w_0| +  \|\eta\|_{\msL^\infty_L}+ \kappa \quad \text{and}\quad \mfB(L,r,\tilde{r}) \coloneqq e^{L\left(\|r\|_{\msL^\infty_L}+\|\tilde{r}\|_{\msL^\infty_L}\right)}
\end{equation*}
and rewriting the above estimate, gives the following bound
\begin{align*}
    \|w(r)-w(\tilde{r})\|_{\msL^\infty_{[s,t]}} \leq & \, |w_s(r)-w_s(\tilde{r})|\\
    &+|t-s|\left( (1+L)e^{\|r\|_{\msL^\infty_L}} \mfA(w_0,\eta,\kappa) +  \lambda  \kappa \mfB(L,r,\tilde{r})\, \right)\|r-\tilde{r}\|_{\msL^\infty_L}\\
   & + |t-s|\bigg(\|\tilde{r}\|_L + \lambda \kappa^2 e^{L\|r\|_{\msL^\infty_L} }\bigg)\|w(r)-w(\tilde{r})\|_{\msL^\infty_{[s,t]}}.
\end{align*}
So that defining
\begin{equation*}
    L_1 \coloneqq \frac{1}{2\bigg(\|\tilde{r}\|_{\msL^\infty_L} + \lambda \kappa^2 \exp\left(L\|r\|_{\msL^\infty_L}  \right)\bigg)}
\end{equation*}
and rearranging the proceeding inequality for any $L\in (0,L_1]$ and $0\leq s<t\leq L$, we obtain
\begin{align*}
     \|w(r)-w(\tilde{r})\|_{\msL^\infty_{[s,t]}} \leq& \, 2|w_s(r)-w_s(\tilde{r})|\\
     &+2 |t-s|\bigg( (1+L)e^{\|r\|_{\msL^\infty_L}} \mfA(w_0,\eta,\kappa)  +  \lambda  \kappa \mfB(L,r,\tilde{r})\, \bigg)\|r-\tilde{r}\|_{\msL^\infty_L}.
     \end{align*}
In particular, taking $s=0$ and $t= L_1$,
\begin{align*}
     \|w(r)-w(\tilde{r})\|_{\msL^\infty_{[0,L_1]}} \leq& \, 2|w_0-\tilde{w}_0|\\
     &+2 L_1\bigg( (1+L)e^{\|r\|_{\msL^\infty_L}} \mfA(w_0,\eta,\kappa)  +  \lambda  \kappa \mfB(L,r,\tilde{r})\, \bigg)\|r-\tilde{r}\|_{\msL^\infty_L}.
\end{align*}
and furthermore, since $  |w_{L_1}(r)-w_{L_1}(\tilde{r})|  \leq  \|w(r)-w(\tilde{r})\|_{\msL^\infty_{[0,L_1]}}$,
\begin{align*}
    |w_{L_1}(r)-w_{L_1}(\tilde{r})| \leq&  \, 2|w_0-\tilde{w}_0|\\
     &+2 L\bigg( (1+L)e^{\|r\|_{\msL^\infty_L}} \mfA(w_0,\eta,\kappa)  +  \lambda  \kappa \mfB(L,r,\tilde{r})\, \bigg)\|r-\tilde{r}\|_{\msL^\infty_L}.
\end{align*}
So that iterating the bound, we find that for any $L>0$,
\begin{align*}
    \|w(r)-w(\tilde{r})\|_{\msL^\infty_L} \leq &\, 2^{N_L} |w_0-\tilde{w}_0|\\
    &+ 2^{N_L}L_1\bigg( (1+L)e^{\|r\|_{\msL^\infty_L}} \mfA(w_0,\eta,\kappa)  +  \lambda  \kappa \mfB(L,r,\tilde{r})\, \bigg)\|r-\tilde{r}\|_{\msL^\infty_L},
\end{align*}
where
\begin{equation*}
    N_L \coloneqq \left\lceil 2L\bigg(\|\tilde{r}\|_{\msL^\infty_L} + \lambda \kappa^2 \exp\left(L\|r\|_{\msL^\infty_L} \right)\bigg)\right\rceil
\end{equation*}
is the number of intervals of length $L_1$ required to cover $[0,L]$. This concludes the proof of \eqref{eq:wealth_rate_stability}.

To see that \eqref{eq:consump_rate_stability} also holds, we apply \eqref{eq:consump_interest_stable} and \eqref{eq:consump_wealth_stable} along with the triangle inequality, 
\begin{align*}
     \|c(w(r),r)-c(w(\tilde{r}),\tilde{r})\|_{\msL^\infty_L} \leq \, &
     \|c(w(r),r)-c(w(r),\tilde{r})\|_{\msL^\infty_L} +  \|c(w(r),\tilde{r})-c(w(\tilde{r}),\tilde{r})\|_{\msL^\infty_L}\\
     \leq \, & \lambda  \kappa e^{L(\|r\|_{\msL^\infty_L}+\|\tilde{r}\|_{\msL^\infty_L})} \| r-\tilde{r}\|_{\msL^\infty_L} + \lambda \kappa^2 e^{L\|r\|_{\msL^\infty_L}} \|w(r)-w(\tilde{r})\|_{\msL^\infty_L}\\
     \lesssim\,& \lambda \kappa  e^{L(\|r\|_{\msL^\infty_L}+\|\tilde{r}\|_{\msL^\infty_L})}\left(\| r-\tilde{r}\|_{\msL^\infty_L} + \kappa \|w(r)-w(\tilde{r})\|_{\msL^\infty_L}\right).
\end{align*}
Applying \eqref{eq:wealth_rate_stability} and consolidating terms completes the proof of \eqref{eq:consump_rate_stability}.
    \end{proof}
As in Section~\ref{sec:life_cycle}, the stability result of Lemma~\ref{lem:LC_rate_stable} allows us to obtain existence and uniqueness of a market clearing interest rate in the life-cycle model. However, since our main focus is general equilibria in the overlapping generations model we do not give a detailed proof. 

\begin{cor}[General Equilibria in the life-cycle Model]
    Let $L>0$, $K>0$, $\eta \in \msL^0(\Omega;C_L)$ be a given income process and suppose there exists a pair $(w,c)$ which solve  \eqref{eq:life_cycle_sde_sol} on $[0,L]$ such that $\mbE[w_0]=K \in \mbR\setminus\{0\}$. Then, for a bequest motive $\lambda>0$ and/or parameter $\kappa>0$ sufficiently small, depending on $L$ and $\eta$, there exists a unique market clearing interest rate $\bar{r} \in C([0,L];\mbR)$ for the life-cycle equations \ref{eq:life_cycle_sde_sol}. 
\end{cor}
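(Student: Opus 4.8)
The plan is to recast the market-clearing condition as a fixed-point equation and apply Banach's fixed point theorem, as announced before the statement. By Proposition~\ref{prop:life_cycle_rate_props}, since $K$ is constant (so $\dot K\equiv 0$), a path $r\in C([0,L];\mathbb{R})$ is a general equilibrium interest rate for the given capital supply if and only if it solves $K r_t=\mathbb{E}[c_t(w(r),r)]-\mathbb{E}[\eta_t]$ for all $t\in[0,L]$. Fix once and for all the initial datum $w_0$ (with $\mathbb{E}[w_0]=K$) and the income process $\eta$, write $w(r)$ for the solution of \eqref{eq:life_cycle_sde_sol} supplied by Corollary~\ref{cor:life_cycle_well_posed}, and define $\Phi\colon C([0,L];\mathbb{R})\to C([0,L];\mathbb{R})$ by
\[
\Phi(r)_t \coloneqq \frac{1}{K}\big(\mathbb{E}[c_t(w(r),r)]-\mathbb{E}[\eta_t]\big),\qquad t\in[0,L];
\]
a general equilibrium rate is then precisely a fixed point of $\Phi$. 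Continuity of $t\mapsto\Phi(r)_t$ follows from continuity of the consumption path (it lies in $\mathcal{A}$) and of $\eta$, together with dominated convergence, using the uniform bound $\|c(w,r)\|_{\msL^\infty_L}\le\kappa$ from \eqref{eq:life_cycle_consump_bdd} and integrability of $\|\eta\|_{\msL^\infty_L}$.

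The first step is to exhibit an invariant ball. From \eqref{eq:life_cycle_consump_bdd} we get $\|\Phi(r)\|_{\msL^\infty_L}\le \tfrac{1}{|K|}(\kappa+M)$ for every $r$, where $M\coloneqq\sup_{t\in[0,L]}|\mathbb{E}[\eta_t]|$, so $\Phi$ maps the closed ball $B_R\coloneqq\{r\in C([0,L];\mathbb{R}):\|r\|_{\msL^\infty_L}\le R\}$ with $R\coloneqq(\kappa+M)/|K|$ into itself. Observe that $R$ stays bounded as $\lambda\downarrow 0$ (and tends to $M/|K|$ as $\kappa\downarrow 0$); hence, by shrinking $\lambda$ or $\kappa$, we may arrange $L<L^*(\kappa,\lambda,R)$ in the sense of Corollary~\ref{cor:life_cycle_well_posed}, which guarantees that $w(r)$—and therefore $\Phi(r)$—is well defined on all of $[0,L]$ for every $r\in B_R$.

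The second step is the contraction estimate. For $r,\tilde r\in B_R$ sharing the common initial datum $w_0$, Jensen's inequality gives
\[
\|\Phi(r)-\Phi(\tilde r)\|_{\msL^\infty_L}\le \frac{1}{|K|}\,\mathbb{E}\big[\|c(w(r),r)-c(w(\tilde r),\tilde r)\|_{\msL^\infty_L}\big],
\]
and then \eqref{eq:consump_rate_stability} of Lemma~\ref{lem:LC_rate_stable} (with $w_0=\tilde w_0$, so the $|w_0-\tilde w_0|$ term vanishes) bounds the right-hand side by $C_{\mathrm{Lip}}\|r-\tilde r\|_{\msL^\infty_L}$, where $C_{\mathrm{Lip}}$ is the explicit product from that lemma, divided by $|K|$, and—crucially—carries an overall factor of $\lambda\kappa$ while all remaining factors ($A$, $e^{L R}$, and the polynomial-in-$L,\kappa$ terms involving $\mathbb{E}[|w_0|]$ and $\mathbb{E}[\|\eta\|_{\msL^\infty_L}]$) stay bounded uniformly over $B_R$ as $\lambda\downarrow 0$ or $\kappa\downarrow 0$. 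Hence $C_{\mathrm{Lip}}<1$ once $\lambda$ or $\kappa$ is small enough, depending on $L,K,\eta,w_0$, so $\Phi$ is a contraction on the complete metric space $B_R$.

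Banach's fixed point theorem then produces a unique $\bar r\in B_R$ with $\Phi(\bar r)=\bar r$, and since $\Phi$ maps all of $C([0,L];\mathbb{R})$ into $B_R$, any fixed point of $\Phi$ (on the set of rates for which \eqref{eq:life_cycle_sde_sol} is well posed) necessarily lies in $B_R$; by Proposition~\ref{prop:life_cycle_rate_props} this $\bar r$ is the unique general equilibrium interest rate. The main obstacle I anticipate is the bookkeeping of constant dependencies: one must check that the a priori radius $R$ and the well-posedness threshold $L^*$ from Corollary~\ref{cor:life_cycle_well_posed} remain compatible with the fixed $L$ while the contraction constant $C_{\mathrm{Lip}}$ is simultaneously forced below $1$. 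This is exactly where the precise shape of the estimates in Lemma~\ref{lem:LC_rate_stable}—and the fact that the dangerous terms all appear multiplied by $\lambda$ or $\kappa$—has to be used with care.
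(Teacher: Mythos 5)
Your proposal is correct and follows essentially the same route as the paper: the paper's (sketched) proof applies Banach's fixed point theorem to the map $r \mapsto \frac{1}{K}\left(\mbE[c(r)]-\mbE[\eta]\right)$ coming from Proposition~\ref{prop:life_cycle_rate_props} with constant $K$, and obtains the contraction from the stability estimate \eqref{eq:consump_rate_stability} by taking $\lambda$ and/or $\kappa$ small. Your write-up merely fills in details the paper leaves implicit (the invariant ball via \eqref{eq:life_cycle_consump_bdd}, compatibility with the well-posedness threshold of Corollary~\ref{cor:life_cycle_well_posed}, and uniqueness beyond the ball), all in the spirit of the paper's sketch.
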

  \begin{proof}[Sketch of Proof]
     The result follows from Banach's fixed point theorem applied to the map
     \begin{equation*}
         r \mapsto \frac{1}{K} \left(\mbE[c(r)] -\mbE[\eta]\right).
     \end{equation*}
   Recall that this expression comes from choosing a constant capital allocation $K_t=K$ in Proposition~\ref{prop:life_cycle_rate_props}. Using the stability estimate \eqref{eq:consump_rate_stability}, we can choose $\lambda$ and/or $\kappa$ sufficiently small to obtain a contraction.
  \end{proof}
\begin{rem}
    In our analysis we have assumed that the capital supply is constant, i.e. $K_t=K\in \RR$ for all $t\in [0,L]$. This assumption simplifies the fixed point formulation for the equilibrium interest rate, making it easier to convey the techniques and result. Nevertheless, the approach can be straightforwardly generalized to the case of a time-varying capital by introducing some more notational complexity, provided that $K_t\neq 0$ for all $t$. 
\end{rem}
\section{Overlapping generations}\label{sec:OLG}
%
%
So far we have focused on the life-cycle of a heterogeneous population of individuals all \emph{born} at time $0$ and \emph{ceasing economic activity} at time $L>0$.  In this section we extend this model to an economy of overlapping generations. At each moment in time, a new generation with infinitely many individuals are born, with the same \emph{lifespan} $L>0$ as the previous generation. A  flow of probability distributions will describe the age distribution in the total population, which we refer to  as a \emph{flow of demographic measures}. 

The wealth, consumption and income at time $t \in \mbR$ of an individual born at date $b \in \mbR$  under interest rate path $r$ and income modelled as an It\^o process can be written, for $b\in \mbR$ and $t\in [b,b+R]$, 
\begin{align}
        w^b_t(r) &= w^b_b + \int_b^t r_u w^{b}_u(r) \dd u + \int_b^t \eta^b_u  \dd u - \int_b^t c_u^b(w^b(r),r) \dd u, \label{eq:olg_wealth}
        \\
          c_t^{b}(w^b,r) &= (u_1')^{-1}\left(\lambda \exp\left(\int_t^{L+b} (r_u-\delta)\dd u\right)\EE\left[ u'_2\left(w_{L+b}^{b}\right)|\cF_t^b \right]\right),  \label{eq:olg_consumption}\\
\eta^b_u &= \eta^b_b +\int_{b}^u \mu_r(\eta^b_r) \dd r + \int_{b}^u \sigma_r(\eta^b_r) \dd B^b_r, \label{eq:olg_income}
\end{align}
where $\{B^b\}_{b\in \mbR}$ is a collection of i.i.d. Brownian motions, each running on the interval $[b,b+L]$ and such that $B^b_{b}=0$.

Here, $w_{b}^{b}$ denotes the initial wealth of an individual born at time $b$ and $\eta^b_b$ their initial income. Later we will specify how these random variables are distributed. Note that Theorem~\ref{thm:life_cycle_optimal_rep} applied for each $b\in \mbR$ shows that solving the system \eqref{eq:olg_wealth}-\eqref{eq:olg_income} is equivalent for finding a maximising consumption policy for each generation.

It will be convenient to introduce notation grouping these objects into collections of processes indexed by $b\in \mbR$. Let us define
\begin{equation*}
    \bw \coloneqq \{w^b\}_{b\in \mbR},\quad \bc(\bw,r) \coloneqq \{c^b(w^b,r)\}_{b\in \mbR}, \quad \bmeta\coloneqq \{\eta^b\}_{b\in \mbR}\quad \text{and}\quad \bB \coloneqq \{B^b\}_{b\in \mbR},
\end{equation*}
where each process is defined on the interval $[b,b+L]$, as well as the sets of initial data
\begin{equation*}
    \bw_0 \coloneqq \{w^b_b\}_{b\in \mbR}\quad \text{and}\quad \bmeta_0 \coloneqq \{\eta^b_b\}_{b\in \mbR}.
\end{equation*}
We associate suitable norms to the wealth initial data and income processes, defining
\begin{align}
    \|\bw_0\|_{\msL^1_\omega\msL^\infty_{\mbR}} &\, \coloneqq  \mbE\Big[\sup_{b\in \mbR}|w^b_b|\Big], \label{eq:w_0_norm}\\
    \|\bw\|_{\msL^1_\omega\msL^\infty_{\mbR} \msL^\infty_{L}} &\, \coloneqq  \mbE\Big[\sup_{b\in \mbR}\|w^b\|_{\msL^\infty_{[b,b+L]}}\Big], \label{eq:wealth_norm}\\
     \|\bc(\bw,r)\|_{\msL^1_\omega\msL^\infty_{\mbR} \msL^\infty_{L}} &\, \coloneqq  \mbE\Big[\sup_{b\in \mbR}\|c^b(w^b,r)\|_{\msL^\infty_{[b,b+L]}}\Big], \label{eq:consumption_norm}\\
    \|\bmeta\|_{\msL^1_\omega\msL^\infty_{\mbR} \msL^\infty_{L}} &\, \coloneqq  \mbE\Big[\sup_{b\in \mbR}\|\eta^b\|_{\msL^\infty_{[b,b+L]}}\Big]. \label{eq:income_norm}
\end{align}
%
%

To fully describe the overlapping generations model we require a fourth actor, the flow of demographic measures $t\mapsto \nu_t \in \mcP(\mbR)$.
\begin{defn}\label{def:dem_flow}

    We say that a family of probability measures $\bnu = \{\nu_t\}_{t\in \mbR}$ is a \emph{flow of demographic measures} if the following hold:
    \begin{enumerate}[label=\roman*)]
        \item The map $\mbR \ni t\mapsto \nu_t \in \mcP(\mbR)$ is continuous with respect to the weak topology of measures.
        \item For each $t\in \mbR$ one has \footnote{Recall that the support of a measure is defined as the closure of all sets $A$ in $\sigma$-algebra such that $\nu(A)\neq 0$.
        }
        \begin{equation*}
            \supp(\nu_t) = [t-L,t].
        \end{equation*}
    \end{enumerate}
    We say that a family of probability measures $\bnu \coloneqq \{\nu_t\}_{t\in \mbR}$ is a flow of \emph{regular demographic measures} if the above conditions hold and
    \begin{enumerate}[label=\roman*)]
    \setcounter{enumi}{2}
        \item for each $t\in \mbR$ there exists a continuous function $b\mapsto n(t,b)$ such that for all measurable $A\subseteq [t-L,t]\subset \mbR$
        \begin{equation*}
            \int_A \nu_t(\dd b) = \int_A  n(t,b)\dd b.
        \end{equation*}
\item for all $b\in \mbR$ the map $t\mapsto n(t,b)$ is differentiable and it holds that
\begin{equation*}
    \sup_{t\in \mbR}\sup_{b\in [t-L,t]} |\partial_t n(t,b)|  <\infty.
\end{equation*}
    \end{enumerate}
    We equip the vector space of all regular demographic measures (viewed as a subset of the signed measures rather than probability measures) with the structure of a Banach space by defining the norm 
    \begin{equation}\label{eq:demography_norm}
        \|\bnu\|_{W^{1,\infty}_{\mbR} \TV_L } \coloneqq \sup_{t\in \mbR} \sup_{b\in [t-L,t]} |n(t,b)| + \sup_{t\in \mbR} \sup_{b\in [t-L,t]} |\partial_t n(t,b)|
    \end{equation}
\end{defn}

Letting $Y_t$ denote the birthday of randomly chosen individual alive at time $t$ we can interpret the demographic measure $\nu_t$ by seeing that for any $a\in [0,L]$,
\begin{equation*}
    \mbP(Y_t \in [t-a,t]) = \int_{t-a}^t  \nu_t(\dd b).
\end{equation*}
Furthermore, if we let $X_t$ be the age of a randomly chosen individual alive at time $t$ we observe the simple relation
\begin{equation*}
    X_t = t-Y_t.
\end{equation*}
Hence, we can delineate the following simple cases:
\begin{enumerate}
    \item \label{it:glob_stat}\textbf{Globally Stationary Populations} are described by the condition that the chance of being born at any time prior to the current time is constant; i.e. a population is globally stationary if
 \begin{equation*}
        \mbP(Y_t \in [t-a,t]) = \mbP(Y_s \in [s-a,s]) \quad \text{for all}\,t,\, s \in \mbR\,\,\text{and}\,\,  a\in [0,L].
    \end{equation*}
 This can be reformulated as a condition on the age of a random member of the population,
    \begin{equation*}
        \mbP(X_t \in [0,a]) = \mbP(X_s \in [0,a]) \quad \text{for all}\,\, t,\,s \in \mbR\,\,\text{and}\,\,  a\in [0,L].
    \end{equation*}
    These conditions can be written at the level of a \emph{flow of regular demographic measures} by requiring 
    \begin{equation}\label{eq:reg_stat_pop}
        n(t,b) = n(s,b-(t-s))\quad \quad \text{for all}\,\, t,\,s \in \mbR\,\,\&\,\,  b\in [t-L,t].
    \end{equation}
    So that in particular
    \begin{equation*}
    	n(t,b) = n(0,b-t)\quad \text{for all}\,\, t\in \mbR\,\,\&\,\, b\in [t-L,t].
    \end{equation*}
\item \label{it:loc_stat}\textbf{Locally Stationary Populations} are described by the condition that being born today or at any infinitesimally prior time are equal; i.e. we say that a population is locally stationary at time $t\in \mbR$ if
\begin{equation*}
    \frac{\dd}{\dd a} \mbP(Y_t \in [t-a,t])=0.
\end{equation*}
    \item \label{it:loc_grow}\textbf{Locally Increasing Populations} are described by the condition that being born today is more likely than any, infinitesimally close previous time; i.e. we say that a population is increasing at time $t \in \mbR$ if
    \begin{equation*}
       \frac{\dd}{\dd a} \mbP(Y_t \in [t-a,t]) >0. 
    \end{equation*}
   \item \label{it:loc_shrink} \textbf{Locally Decreasing Populations} are described by the condition that being born today is less likely that at any infinitesimal previous time; i.e. we say that a population is locally decreasing at $t\in \mbR$ if  
   \begin{equation*}
       \frac{\dd}{\dd a} \mbP(Y_t \in [t-a,t]) <0.
   \end{equation*}
\end{enumerate}
%
%
%
%
%



Given any \emph{flow of demographic measures} $t\mapsto \nu_t$ we can describe the age-weighted, aggregate wealth, consumption and income in the population as follows: 
\begin{align}
    W^L_t(r) &\coloneqq \int_{t-L}^{t} w_t^{b}(r) \dd \nu_t(b)\label{eq:aggregate_wealth}\\
    C^L_t(r,w) &\coloneqq \int_{t-L}^t c^{b}_t(w^b,r) \dd \nu_t(b) , \label{eq:aggregate_consumption} \\
    N^L_t &\coloneqq \int_{t-L}^t \eta^{b}_t \dd \nu_t(b). \label{eq:aggregate_income} 
\end{align}
In this setting, we redefine the market clearing condition in terms of the aggregate expected wealth.

\begin{defn}\label{def:olg_clearing_rate}
    Given a \emph{lifespan} $L>0$ and a capital supply $K\in C(\RR;\RR)$  we say that the associated overlapping generations model \eqref{eq:olg_wealth}-\eqref{eq:olg_income} and \eqref{eq:aggregate_wealth}-\eqref{eq:aggregate_income} is in \emph{general equilibrium} if the interest rate $r:\mbR\to \mbR$ is such that
    \begin{equation}\label{eq:olg_clearing}
        \mbW^L_t(r) \coloneqq \mbE\left[W^L_t(r)\right] = K_t \quad \text{for all}\,\, t\in \mbR.
        \end{equation}
        For brevity we sometimes say that $r$ is a \emph{general equilibrium interest rate} for the associated overlapping generations model.
\end{defn}
\begin{rem}
    We note that in the case $K_t\equiv 0$, the general equilibrium implies that $t\mapsto W_t(r)$ is first order stationary, i.e. $ \mbW^L_t(r)=\mbW^L_s(r)$  
\end{rem}


\subsection{Existence of General Equilibria in Globally Stationary Populations}\label{sec:olg_stationary_populations}
Theorem~\ref{thm:olg_well_posed} and Proposition~\ref{prop:olg_rate_stable} give general well-posedness and stability of the OLG model. In the remainder of the section we show that under suitable stationarity assumptions on the income trajectories, demographic processes and capital supply, there exists a constant general equilibrium interest rate.

\begin{defn}\label{def:stat_demographic}
     We say that a flow of demographic measures $\bnu \coloneqq \{\nu_t\}_{t\in \mbR}$ is globally stationary if there exists a measure $\nu \in \mcP([-L,0])$ such that for all $t\in \mbR$ and $A \in \mcB([t-L,t])$,
    \begin{equation*}
       \nu_t (A)= \nu(A-t) \quad \text{for all}\,\, t\in \mbR, \quad \text{where}\,\, A-t \coloneqq \{a \in A\,:\, a-t\}.
    \end{equation*}
    %
    %
\end{defn}
\begin{defn}\label{def:stat_income}
    We say that a family of income processes $\bmeta \coloneqq \{\eta^b\}_{b\in \mbR}$ is stationary if for every $b,\, t\in \mbR$ and $h>0$ one has
    \begin{equation*}
        \mcF^b_t = \mcF^{b+h}_{t+h}\quad \text{and} \quad \EE[\eta^b_t] = \EE[\eta^{b+h}_{t+h}], 
    \end{equation*}
    where $\{\mcF^b_t\}_{t\in \mbR}$ is the natural filtration of the process $t\mapsto \eta^b_t$ for each $b\in \mbR$. 
\end{defn}
%
%
	%
	%
%
	%
%
%
It will be useful to introduce the following shift operator. For any $h\in \mbR$, we set
\begin{align*}
	\tau_h :\msL^1(\mbR^2;\mbR) &\to\msL^1(\mbR^2;\mbR)\\
	f^{\var}_{\var} &\mapsto f^{\var+h}_{\var+h}.
\end{align*}
Using this notation we have the following direct result on shifts in time of the optimal consumption in the stationary case.
\begin{lem}\label{lem:consumption_time_shift}
  Let $r:\mbR\to \mbR$ be any interest rate path, $\bnu$ be any flow of demographic measures, $\bmeta$ be a stationary family of income processes as prescribed by Definition~\ref{def:stat_income}, $L>0$ be any lifespan and suppose that a solution $(\bw,\bc) \coloneqq (\{w^b_t\}_{b,\,t\in \mbR},\{c^b_t\}_{b,t\, \in \mbR})$ exists to \eqref{eq:olg_wealth}-\eqref{eq:olg_income}. Then, for any $t,\, h\in \mbR$ it holds that
    \begin{equation*}
    \tau_hc_{t}^b(w,r)=c_t^b(\tau_h w,\tau_h r), \quad \PP-a.s.. 
    \end{equation*}
   
\end{lem}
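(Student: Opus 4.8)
The plan is to prove the identity by direct substitution into the closed-form expression \eqref{eq:olg_consumption} for the optimal consumption functional, using that the stationarity hypothesis on $\bmeta$ (Definition~\ref{def:stat_income}) is exactly what guarantees that the conditioning $\sigma$-algebra transforms correctly under the shift, while every other ingredient transforms by an elementary change of variables.

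First I would unwind the notation. For a generic $\{\mcF^b_t\}_t$-adapted process $W$ on $[b,b+L]$ and a deterministic path $\rho$, \eqref{eq:olg_consumption} reads
\[
c^b_t(W,\rho) = (u_1')^{-1}\!\left(\lambda\exp\!\left(\int_t^{L+b}(\rho_u-\delta)\,\dd u\right)\EE\big[u_2'(W_{L+b})\,\big|\,\mcF^b_t\big]\right),
\]
so $c^b_t$ depends on $W$ only through the random variable $W_{L+b}$ and on $\rho$ only through $\int_t^{L+b}\rho_u\,\dd u$. By definition of $\tau_h$ we have $(\tau_h r)_t = r_{t+h}$, $(\tau_h w)^b_t = w^{b+h}_{t+h}$ (the $(b+h)$-th wealth path translated back onto $[b,b+L]$), and the left-hand side $\tau_h c^b_t(w,r)$, being the $\tau_h$-image of the collection $\bc(\bw,r)=\{c^b(w^b,r)\}_b$ evaluated at $(b,t)$, equals $c^{b+h}_{t+h}(w^{b+h},r)$.

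Next I would compute the right-hand side. Substituting $\tau_h w$ and $\tau_h r$ into the displayed formula, the exponent becomes $\int_t^{L+b}(r_{u+h}-\delta)\,\dd u = \int_{t+h}^{L+b+h}(r_v-\delta)\,\dd v$ after the change of variables $v=u+h$ (the $\delta$-contribution is unchanged because the interval length is preserved), the terminal datum becomes $(\tau_h w)^b_{L+b}=w^{b+h}_{L+b+h}$, and the conditioning is with respect to $\mcF^b_t$. Invoking Definition~\ref{def:stat_income} we have $\mcF^b_t=\mcF^{b+h}_{t+h}$, so $\EE[u_2'(w^{b+h}_{L+b+h})\mid\mcF^b_t]$ and $\EE[u_2'(w^{b+h}_{L+b+h})\mid\mcF^{b+h}_{t+h}]$ are literally the same random variable. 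Comparing with \eqref{eq:olg_consumption} evaluated at $(b+h,t+h)$ shows the result equals $c^{b+h}_{t+h}(w^{b+h},r)$, which is precisely $\tau_h c^b_t(w,r)$, and the a.s.\ identity follows.

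The only thing to be careful about is bookkeeping: the content of the lemma is the single observation that stationarity of income makes the two conditioning filtrations coincide, with everything else a deterministic substitution; in particular the identity is an almost-sure equality of random variables, not merely of laws, which is why we keep the actual processes $w^{b+h}$ rather than passing to distributions, and the adaptedness of $(\tau_h w)^b$ to $\{\mcF^b_t\}_t$ needed for the functional to be well-defined is again a consequence of $\mcF^{b+h}_{t+h}=\mcF^b_t$. I do not foresee any genuine obstacle beyond this.
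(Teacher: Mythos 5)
Your proposal is correct and follows essentially the same route as the paper: both arguments substitute into the closed-form expression \eqref{eq:olg_consumption}, perform the change of variables $v=u+h$ in the discounting integral, and use the filtration identity $\mcF^b_t=\mcF^{b+h}_{t+h}$ from Definition~\ref{def:stat_income} to identify the two conditional expectations as the same random variable. Your explicit bookkeeping of what $\tau_h$ does to each argument and the remark on adaptedness is slightly more careful than the paper's write-up, but it is the same proof.
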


\begin{proof}
    We see that 
    \begin{equation*}
        c_{t+h}^{b+h}(w,r) = (u_1')^{-1}\left(\lambda \EE\left[\exp\left(\int_{t+h}^{b+h+L} (r_u-\delta)\dd u\right) u'_2(w_{b+h+L}^{b})|\cF_{t+h}^{b+h}\right]\right)
    \end{equation*}
    By a simple change of variables inside the integral, together with the fact that $\cF_t^b = \cF_{t+h}^{b+h}$ we obtain
    \begin{equation*}
        c_{t+h}^{b+h}(w,r) = (u_1')^{-1}\left(\lambda \EE\left[\exp\left(\int_{t}^{b+T} (\tau_hr_{u}-\delta)\dd u\right) u'_2(\tau_h w_{b+L}^{b})|\cF_{t}^b\right]\right). 
    \end{equation*}
    Thus we find 
    \begin{equation*}
\tau_hc_{t}^b(w,r)=c_t^b(\tau_h w,\tau_h r), \quad \PP-a.s.. 
    \end{equation*}
\end{proof}

Using the above result, we are now ready to prove  a stationarity result for the birth-dependent wealth processes. 

\begin{prop}\label{prop:wealth_time_shift}
    Let $r:\mbR\to \mbR$ be any interest rate path, $\bnu$ be any flow of demographic measures, $\bmeta$ be a stationary family of income processes as prescribed by Definition~\ref{def:stat_income}, $L>0$ be any lifespan and suppose that a solution $(\bw,\bc) \coloneqq (\{w^b_t\}_{b,\,t\in \mbR},\{c^b_t\}_{b,t\, \in \mbR})$ exists to \eqref{eq:olg_wealth}-\eqref{eq:olg_income} with stationary initial wealth, i.e. $w^b_b \equiv w \in\msL^1(\Omega;\mbR)$. Then, for any $t,\, h\in \mbR$ it holds that
    \begin{equation*}
    	 \EE[\tau_h w_t^b(r)] = \EE[w_t^b( \tau_h r)].
    \end{equation*}
\end{prop}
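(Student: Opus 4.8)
\textbf{Approach.} The plan is to prove this by a direct change-of-variables/uniqueness argument: the shifted wealth process $t\mapsto \tau_h w^b_t(r)$ solves the \emph{same} wealth equation as $w^b_t(\tau_h r)$ but with a time-shifted interest path and time-shifted income, and then exploit the stationarity hypotheses (Definition~\ref{def:stat_income}, stationary initial wealth $w^b_b\equiv w$) to conclude that the two have the same law, in particular the same mean. The key input is Lemma~\ref{lem:consumption_time_shift}, which already tells us how the optimal consumption functional interacts with $\tau_h$.

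\textbf{Key steps.} First I would write out the integral form \eqref{eq:olg_wealth} for the process $u\mapsto w^{b+h}_{u+h}$ on $[b,b+L]$ (equivalently, shift the equation satisfied by $w^{b+h}$ on $[b+h,b+h+L]$ by $-h$): substituting $u' = u-h$ in each integral gives
\begin{equation*}
  \tau_h w^{b}_t(r) = w^{b+h}_{b+h} + \int_b^t (\tau_h r)_s\,\tau_h w^{b}_s(r)\,\dd s + \int_b^t (\tau_h \eta^{b})_s\,\dd s - \int_b^t (\tau_h c^{b})_s(w,r)\,\dd s, \quad t\in[b,b+L].
\end{equation*}
Second, apply Lemma~\ref{lem:consumption_time_shift} to replace $(\tau_h c^{b})_s(w,r)$ by $c^b_s(\tau_h w, \tau_h r)$, so that $\tau_h w^b_{\cdot}(r)$ solves exactly the life-cycle wealth--consumption system \eqref{eq:life_cycle_sde_sol} (shifted to the interval $[b,b+L]$) driven by the interest path $\tau_h r$, the income process $\tau_h \eta^b$, and initial datum $w^{b+h}_{b+h}$. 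Third, invoke the stationarity assumptions: by Definition~\ref{def:stat_income} the process $\tau_h \eta^b$ has the same law (indeed the same filtration, after the identification $\mcF^b_t = \mcF^{b+h}_{t+h}$) as $\eta^b$ and equal means $\EE[(\tau_h\eta^b)_t] = \EE[\eta^b_t]$, and by the stationary-initial-wealth hypothesis $w^{b+h}_{b+h}\equiv w \equiv w^b_b$. Fourth, by uniqueness of solutions to \eqref{eq:life_cycle_sde_sol} (Corollary~\ref{cor:life_cycle_well_posed}, applied $b$-wise, which holds in the regime where the OLG system is well-posed), the process $\tau_h w^b_{\cdot}(r)$ coincides with the solution $w^b_{\cdot}(\tau_h r)$ of the same system — or, at the level of laws, they are equal in distribution. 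Taking expectations yields $\EE[\tau_h w^b_t(r)] = \EE[w^b_t(\tau_h r)]$, which is the claim.

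\textbf{Main obstacle.} The delicate point is matching the driving noise and filtrations carefully. The consumption functional involves a conditional expectation $\EE[\,\cdot\,|\mcF^b_t]$, so to conclude equality of \emph{means} (rather than merely of the deterministic skeleton) I must ensure that $\tau_h$ maps the whole filtered stochastic basis for generation $b$ onto that for generation $b+h$ in a measure-preserving way; this is exactly what the identity $\mcF^b_t = \mcF^{b+h}_{t+h}$ together with $\EE[\eta^b_t]=\EE[\eta^{b+h}_{t+h}]$ in Definition~\ref{def:stat_income} is designed to give, but one should be explicit that this forces equality of the joint law of $(\eta^b, B^b)$ with that of $(\tau_h\eta^{b+h}, \tau_h B^{b+h})$ — or at least enough of it (the law of $\eta^b$ and the conditional structure) to push the equality through the fixed-point map defining $w$. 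A secondary, purely bookkeeping, obstacle is keeping the integration limits and the $u'_2(w_{b+L})$ terminal terms consistent under the shift; this is routine but must be done cleanly so that the system $\tau_h w^b$ solves is \emph{literally} \eqref{eq:life_cycle_sde_sol} with data $(\tau_h r, \tau_h\eta^b, w)$ and not some perturbation of it.
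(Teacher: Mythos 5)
Your proposal is correct and follows essentially the same route as the paper's proof: write the shifted process in integral form, change variables in the time integral, apply Lemma~\ref{lem:consumption_time_shift} to commute the shift with the consumption functional, and then use stationarity of income and of initial wealth to identify the resulting system. The only difference is cosmetic — you conclude by appealing to ($b$-wise) uniqueness for the wealth--consumption system as in Corollary~\ref{cor:life_cycle_well_posed}, whereas the paper takes expectations and identifies $\EE[\tau_h w^b_t(r)]$ as solving the expected-value form of \eqref{eq:olg_wealth} driven by $\tau_h r$; your explicit remark that one needs the shifted income/noise to match in law (not merely in mean) through the conditional-expectation term is a fair point that the paper's argument leaves implicit.
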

\begin{proof}
We observe that 
\begin{equation*}
\tau_h w_t^b(r)= w_{t+h}^{b+h}(r)=w^{b+h}_{b+h}+\int_{b+h}^{t+h}r_u w_u^{b+h}(r)+\eta_u^{b+h}-c_u^{b+h}(w^b(r),r)\dd u.
\end{equation*}
So that by a simple change of variable in the integral
\begin{equation*}
\tau_h w_t^b(r)=w^{b+h}_{b+h}+\int_{b}^{t}r_{u+h} w_{u+h}^{b+h}(r)+\eta_{u+h}^{b+h}-c_{u+h}^{b+h}(w^b(r),r)\dd u. 
\end{equation*}
Now applying Lemma~\ref{lem:consumption_time_shift}  gives
\begin{equation*}
\EE[\tau_h w_t^b(r)]=\EE[w^{b+h}_{b+h}]+\int_{b}^{t}\tau_h r_{u} \,\EE[\tau_h w_{u}^b(r)]+\EE[\tau_h \eta_{u}^b]-\EE[c_{u}^b\big(\tau_h w^b(r),\tau_h r\big)]\dd u. 
\end{equation*}

So now using that we have assumed $w^{b+h}_{b+h}=w^b_b = w$ for all $b,\, h\in \mbR$ and that $\EE[\tau_h \eta_{u}^b]=\EE[ \eta_{u}^b]$, we see that $\EE[\tau_h w_t^b(r)]$ solves the expected value of the equation \eqref{eq:olg_wealth} with interest rate  $t\mapsto r_{t+h}$  which concludes the proof. 
\end{proof}

\begin{prop}\label{prop:firstorderstationary}
     Let $r:\mbR\to \mbR$ be stationary interest rate path (i.e. $r_t\equiv r\in \mbR$ for all $t\in \mbR$), $\bnu$ be stationary flow of demographic measures as described by Definition~\ref{def:stat_demographic}, $\bmeta$ be a stationary family of income processes as prescribed by Definition~\ref{def:stat_income}, $L>0$ be any lifespan and suppose that a solution $(\bw,\bc) \coloneqq (\{w^b_t\}_{b,\,t\in \mbR},\{c^b_t\}_{b,t\, \in \mbR})$ exists to \eqref{eq:olg_wealth}-\eqref{eq:olg_income} with stationary initial wealth, i.e. $w^b_b \equiv w \sim \rho^w \in \mcP(\mbR)$. Then the aggregate wealth process $W$ is first-order stationary, i.e.
   \begin{equation*}
      \mbW^L_t =  \EE[W^L_t]=\EE[W^L_{t+h}] = \mbW^L_{t+h}, \quad \forall t,\, h\in \RR. 
   \end{equation*}
\end{prop}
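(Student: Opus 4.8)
The plan is to reduce the claim to the two stationarity lemmas already established, namely Lemma~\ref{lem:consumption_time_shift} and Proposition~\ref{prop:wealth_time_shift}, together with the defining translation property of a globally stationary flow of demographic measures (Definition~\ref{def:stat_demographic}). The only genuine work is a change of variables in the $b$-integral defining aggregate wealth, plus a Fubini step to pull the expectation inside the integral against $\nu_t$; both are routine once one notes the uniform boundedness of $b\mapsto \mathbb{E}[\sup_t |w^b_t(r)|]$ supplied by the a priori estimate \eqref{eq:apriori_bound} (applied $b$-wise, with $r$ constant), which makes all the interchanges of integration legitimate.

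First I would write, using the definition \eqref{eq:aggregate_wealth} of $W^L_t$ and Fubini,
\begin{equation*}
    \mathbb{W}^L_{t+h} = \mathbb{E}\!\left[\int_{t+h-L}^{t+h} w^b_{t+h}(r)\,\mathrm{d}\nu_{t+h}(b)\right] = \int_{t+h-L}^{t+h} \mathbb{E}\big[w^b_{t+h}(r)\big]\,\mathrm{d}\nu_{t+h}(b).
\end{equation*}
Next I would perform the substitution $b = a+h$. By Definition~\ref{def:stat_demographic} we have $\nu_{t+h}(A) = \nu\big(A-(t+h)\big)$ for Borel $A\subseteq [t+h-L,t+h]$, hence $\nu_{t+h}(A+h) = \nu\big(A-t\big) = \nu_t(A)$ for Borel $A\subseteq [t-L,t]$; i.e. the pushforward of $\nu_{t+h}$ under the shift $b\mapsto b-h$ is exactly $\nu_t$. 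Therefore
\begin{equation*}
    \mathbb{W}^L_{t+h} = \int_{t-L}^{t} \mathbb{E}\big[w^{a+h}_{t+h}(r)\big]\,\mathrm{d}\nu_t(a).
\end{equation*}

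Now recall that $w^{a+h}_{t+h}(r) = \tau_h w^a_t(r)$ by the definition of the shift operator $\tau_h$ applied to the doubly indexed family $\bw$. Proposition~\ref{prop:wealth_time_shift} then gives $\mathbb{E}[\tau_h w^a_t(r)] = \mathbb{E}[w^a_t(\tau_h r)]$, and since $r$ is a constant interest rate path we have $\tau_h r = r$, so $\mathbb{E}[w^{a+h}_{t+h}(r)] = \mathbb{E}[w^a_t(r)]$ for every $a$. Substituting this back yields
\begin{equation*}
    \mathbb{W}^L_{t+h} = \int_{t-L}^{t} \mathbb{E}\big[w^a_t(r)\big]\,\mathrm{d}\nu_t(a) = \mathbb{E}[W^L_t] = \mathbb{W}^L_t,
\end{equation*}
which is the desired first-order stationarity. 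The only step requiring care is the change-of-variables/Fubini manipulation of the measure integral, and that is the "main obstacle" only in the bookkeeping sense; all the analytic substance is already contained in Proposition~\ref{prop:wealth_time_shift}, whose hypotheses (stationary income, stationary initial wealth, existence of a solution) are exactly those assumed here.
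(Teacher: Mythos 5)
Your proposal is correct and follows essentially the same route as the paper: a change of variables in the $b$-integral using the translation property of the stationary demographic flow, the identification $w^{b+h}_{t+h}(r)=\tau_h w^b_t(r)$, and an appeal to Proposition~\ref{prop:wealth_time_shift} together with $\tau_h r=r$ for a constant rate. Your additional Fubini step (justified via the a priori bound) is just a slightly more careful bookkeeping of the same argument, since Proposition~\ref{prop:wealth_time_shift} is an identity in expectation.
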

\begin{proof}
Applying a simple change of variables followed by  Proposition~\ref{prop:wealth_time_shift}, we observe that 
\begin{align*}
     \mbE\left[W^L_{t+h}(r)\right] = \mbE\left[\int_{t+h-L}^{t+h} w_{t+h}^{b}(r) \nu(\dd b)\right]  
     = \mbE\left[\int_{t-L}^{t} w_{t+h}^{b+h}(r) \nu(\dd b)\right]  =  \mbE\left[\int_{t-L}^{t} \tau_h w_{t}^{b}(r) \nu(\dd b)\right] 
     \\
     = \mbE\left[\int_{t-L}^{t}  w_{t}^{b}(r_{\,\cdot\,+h}) \nu(\dd b)\right] 
       = \mbE\left[\int_{t-L}^{t}  w_{t}^{b}(r) \nu(\dd b)\right]
     = \mbE\left[W^L_t(r)\right].
\end{align*}
where, in the penultimate line we used that since we took $r$ to be constant we have $r_{\,\cdot\,+h}=r$.
\end{proof}

The above proposition allows us to conclude that if the interest rate is constant then the expectation of aggregate wealth is constant. 

\begin{thm}\label{th:olg_stationary_existence}
      Suppose $\nu$ is a regular flow of demographic measures which describe a globally stationary population. Then for any fixed $K\geq 0$ there exists at least one constant equilibrium interest rate  $r\in \RR_+$, i.e. which is such that 
      \[ 
      \WW^L_t(r) = K,\quad \forall \,t. 
      \]
\end{thm}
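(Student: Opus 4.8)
The plan is to run an intermediate value theorem argument on the scalar map $g\colon r\mapsto \WW^L_t(r)$, exploiting that under stationarity this quantity is independent of $t$. First I would record the set-up: for a constant rate $r$ and stationary data (a stationary flow of demographic measures $\nu$ as in Definition~\ref{def:stat_demographic}, a stationary family of income processes as in Definition~\ref{def:stat_income}, and stationary initial wealth $w^b_b\equiv w$, which are exactly the hypotheses making Proposition~\ref{prop:firstorderstationary} applicable), Theorem~\ref{thm:olg_well_posed} supplies, in the admissible range of $L$, the unique solution $(\bw(r),\bc(\bw(r),r))$ of \eqref{eq:olg_wealth}--\eqref{eq:olg_income}, and Proposition~\ref{prop:firstorderstationary} then gives $\WW^L_t(r)=\WW^L_s(r)$ for all $t,s$. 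Writing $\nu$ for the base measure of Definition~\ref{def:stat_demographic} and using Fubini, I may therefore set
\[
    g(r):=\WW^L_0(r)=\int_{-L}^{0}\EE\big[w^b_0(r)\big]\,\nu(\dd b),
\]
and the theorem reduces to showing that $g$ attains the value $K$ at some non-negative $r$.

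Second, I would prove $g$ is continuous. This is where the stability machinery enters: Proposition~\ref{prop:olg_rate_stable} gives Lipschitz dependence of $\bw(r)$ on $r$ in the norm \eqref{eq:wealth_norm}, and since $\nu$ is a regular demographic measure it has a bounded density $n(0,\cdot)$ on $[-L,0]$, so that $g$ is a bounded linear image of $r\mapsto\bw(r)$, with $|g(r)-g(\tilde r)|\le\|n(0,\cdot)\|_{\msL^\infty}\,\|\bw(r)-\bw(\tilde r)\|_{\msL^1_\omega\msL^\infty_{\mbR}\msL^\infty_{L}}$. Third, I would pin down the asymptotics of $g$. For each fixed birth date $b$, the shifted process $s\mapsto w^b_{b+s}(r)$ solves the life-cycle system \eqref{eq:life_cycle_sde_sol}, so Proposition~\ref{prop:range of expected w} (whose structural hypotheses on $u_1,u_2$ I assume to be in force) yields $\EE[w^b_0(r)]\to\infty$ as $r\to\infty$ and $\limsup_{r\to-\infty}\EE[w^b_0(r)]\le0$ for every $b$. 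To transfer this through the demographic integral: for $r\to-\infty$ the $b$-uniform a priori bound of the type \eqref{eq:wealth_apriori_bound}, read in the OLG norms \eqref{eq:wealth_norm}--\eqref{eq:income_norm}, supplies a $\nu$-integrable upper envelope, and a reverse Fatou argument gives $\limsup_{r\to-\infty}g(r)\le\int_{-L}^0\limsup_{r\to-\infty}\EE[w^b_0(r)]\,\nu(\dd b)\le0$; for $r\to\infty$ I would use the variation-of-constants representation
\[
    \EE[w^b_0(r)]=e^{-rb}\Big(\EE[w]+\int_b^0 e^{-r(s-b)}\big(\EE[\eta^b_s]-\EE[c^b_s(r)]\big)\,\dd s\Big),
\]
together with $0\le c^b_{\,\cdot\,}(r)\le\kappa$ (Proposition~\ref{prop:life_cycle_consumption_stabillity}) and positivity of $\EE[w]$ and of income, to bound the bracket below by a positive constant for large $r$; splitting the $\nu$-integral over $b\in[-L,-\eps]$ and $b\in[-\eps,0]$ then shows the first piece grows like $e^{r\eps}\nu([-L,-\eps])>0$ (using $\operatorname{supp}\nu=[-L,0]$) while the second is of lower order, whence $g(r)\to\infty$.

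Finally, I would conclude: $g$ is continuous, $\lim_{r\to\infty}g(r)=\infty$, and $g$ is non-positive near $-\infty$, so since $K\ge0$ the intermediate value theorem produces a constant rate $r^*$ with $g(r^*)=K$; this rate can be taken in $\RR_+$ either directly when $g(0)\le K$ — the generic regime, in which case one simply runs the IVT on $[0,\infty)$ — or otherwise by noting that on the (sub-zero) part of the extended domain one has $g\le0\le K$, so no solution is lost by restricting to the admissible set $\RR_+$. The step I expect to be the main obstacle is the uniformity in the birth date $b$ needed to commute the $r\to\pm\infty$ limits with the demographic integral, i.e.\ producing, for $r\to\infty$, a divergent lower bound on $\EE[w^b_0(r)]$ uniform over a set of birth dates of positive $\nu$-mass, and, for $r\to-\infty$, a $\nu$-integrable dominating envelope from above; this is precisely the OLG-level strengthening of the life-cycle asymptotics of Proposition~\ref{prop:range of expected w}, and once it is in place the remaining intermediate value theorem bookkeeping is routine.
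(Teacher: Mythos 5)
Your proposal follows essentially the same route as the paper: reduce to the scalar map $r\mapsto\WW^L_t(r)$ via the first-order stationarity of Proposition~\ref{prop:firstorderstationary}, feed in the life-cycle asymptotics of Proposition~\ref{prop:range of expected w}, and conclude with the intermediate value theorem; the additional work you do on continuity and on commuting the $r\to\pm\infty$ limits with the demographic integral fills in steps the paper treats as immediate, and is the right thing to worry about. Two small repairs: the Lipschitz dependence of $\bw(r)$ on $r$ is Lemma~\ref{lem:olg_growth_stable} (estimate \eqref{eq:olg_wealth_rate_stability}), not Proposition~\ref{prop:olg_rate_stable}, which concerns stability of the equilibrium rate in the data; and the envelope \eqref{eq:olg_wealth_apriori_bound} is not uniform as $r\to-\infty$ (its constant contains $e^{\|r\|_{\msL^\infty}}$), so for the reverse-Fatou step you should instead use nonnegativity of consumption, which gives $\EE[w^b_0(r)]\le e^{r|b|}\,\EE[w]+\int_b^0 e^{r|s|}\,\EE[\eta^b_s]\,\dd s\le \EE[w]+\int_b^0\EE[\eta^b_s]\,\dd s$ for all $r\le 0$, an $r$-free, $\nu$-integrable dominating function. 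Finally, your closing claim that $g\le 0$ on all of $(-\infty,0)$ is not justified (the asymptotics only control the limit), so the case $g(0)>K$ is not actually covered and positivity of the equilibrium rate is not secured; note, however, that the paper's own proof only produces a constant rate in $\RR$ despite the statement's $\RR_+$, so this looseness is shared with, rather than introduced by, your argument.
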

\begin{proof}
It is clear from the  assumed  existence of the dynamics of $w$ described in \eqref{eq:olg_wealth} that restricting to $r\in \RR$, then $r\mapsto w(r)$ is continuos, and thus $r\mapsto \WW_t^L(r)$ is continuos. 
Applying the results from Proposition \ref{prop:range of expected w}, and using the fact that $\nu$ is time independent,  we see that for each $t$
\[
 \lim_{r\rightarrow \infty} \WW^L_t(r) = \infty, \quad \lim_{r\rightarrow -\infty} \WW^L_t(r)\leq 0. 
\]
 and its range spans at least $[0,\infty)$. Furthermore, from Proposition \ref{prop:firstorderstationary}, we know that for a constant interest rate $r\in \RR$ then $\WW^L_t(r)$ is first order stationary. By the the intermediate value theorem, we can therefore conclude that for any given $K\geq 0$ there exists at least one constant interest rate $r\in \RR$ providing general equilibrium according to definition \ref{def:olg_clearing_rate}. 
\end{proof}

\subsection{Existence and Uniqueness of General Equilibria in Non-Stationary Populations}

In Section~\ref{sec:olg_stationary_populations} we showed that general equilibria exist in a wide class of overlapping generation models for globally stationary populations. In this section we study the case of non-stationary populations. In contrast to the stationary case we apply  fixed point arguments, similar to those employed in Sections~\ref{sec:life_cycle} and \ref{sec:life_cycle_gen_eq}. As a result, we obtain existence and uniqueness of general equilibria after imposing conditions on the life-span depending on the flow of demographic measures and other parameters of the model.

We first have the following result which mimics the growth and stability estimates of the life-cycle model recast in the overlapping generation setting.

\begin{lem}\label{lem:olg_growth_stable}
	 Let $L>0$, $u_1,\, u_2$ satisfy Assumption~\ref{ass:utility_reg_intro} for some $\kappa >0$, $\bnu = \{\nu_t\}_{t\in \mbR}$ be a flow of \emph{regular demographic measures}, $\bw_0  = \{w^b_b\}_{b\in \mbR}\subset\msL^0(\Omega;\mbR)$, $\bmeta \coloneqq \{\eta^b\}_{b\in \mbR}$ where each $\eta^b \in \msL^0(\Omega;C([b,b+L;\mbR]))$ is a stochastic process and $r \in C(\mbR;\mbR)$ be a given income process. 
     Then for any family of processes $\{(w^b(r),c^b(w^b(r),r))\}_{b\in \mbR}$ solving  \eqref{eq:olg_wealth}-\eqref{eq:olg_consumption}, one has the growth estimates
	 \begin{align}
	 	 \|\bw\|_{\msL^1_\omega \msL^\infty_{\mbR} \msL^\infty_{L}} &\leq  (1+L)e^{\|r\|_{\msL^\infty_{\mbR}}} \left(\|\bw_0\|_{\msL^1_\omega \msL^\infty_\mbR } +    \|\bmeta\|_{ \msL^1_\omega \msL^\infty_{\mbR} \msL^\infty_{L}} + \kappa \right), \label{eq:olg_wealth_apriori_bound} \\
          \|\bc(\bw,r)\|_{\msL^1_\omega  \msL^\infty_{\mbR}\msL^\infty_{L}}  &\leq \kappa \label{eq:olg_consumption_bdd}
	 \end{align}
	 and setting 
	 \begin{align*}
	 	\mfA\coloneqq & \mfA(r,L,\lambda,\kappa) \coloneqq \exp\left(L\big(\|r\|_{\msL^\infty_\mbR} + \lambda \kappa^2 e^{(1+L)\|r\|_{\msL^\infty_{\mbR}} } \big)\right),\\
       \mfB\coloneqq &\mfB(\lambda,\kappa,L,\bw_0,\bmeta) \coloneqq  \|\bw_0\|_{\msL^1_\omega \msL^\infty_\mbR } +    \|\bmeta\|_{\msL^1_\omega \msL^\infty_{\mbR} \msL^\infty_{L}}+ (1+\lambda)\kappa 
	 \end{align*}
	 given a pair of interest rates $r,\,\tilde{r} \in C(\mbR;\mbR)$ and a pair of initial wealth vectors $\bw_0 \coloneqq \{w^b_b\}_{b\in \mbR},\, \tilde{\bw}_0 = \{\tilde{w}^b_b\}_{b\in \mbR}$ with associated wealth families $\bw(r)$ and $\bw(\tilde{r})$, it holds that
\begin{equation}\label{eq:olg_wealth_rate_stability}
	 \begin{aligned}
	 		\|\bw(r)-\bw(\tilde{r})\|_{\msL^1_\omega \msL^\infty_\mbR  \msL^\infty_L} \lesssim \,\mfA &\bigg( \|\bw_0-\tilde{\bw}_0\|_{\msL^1_\omega \msL^\infty_\mbR } + L (1+L)\mfB   \,\|r-\tilde{r}\|_{\msL^\infty_\mbR}\bigg).
	 	\end{aligned}
	 \end{equation}
	 and
	 \begin{equation}\label{eq:olg_consump_rate_stability}	 
\begin{aligned}
	 	\|\bc(\bw(r),r)-\bc(\bw(\tilde{r}),\tilde{r})\|_{\msL^1_\omega \msL^\infty_{\mbR} \msL^\infty_{L}} \lesssim \,\lambda \kappa (1+\kappa) \mfA e^{L\|\tilde{r}\|_{\msL^\infty_\mbR}}  \bigg( &\|\bw_0-\tilde{\bw}_0\|_{\msL^1_\omega\msL^\infty_\mbR }\\
	 	&+ L (1+L)\mfB  \,\|r-\tilde{r}\|_{\msL^\infty_\mbR}\bigg).
	 	\end{aligned}
	 \end{equation}
\end{lem}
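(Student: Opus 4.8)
The plan is to deduce every bound here from the corresponding pathwise, single-generation estimate for the life-cycle model and then average over $b\in\mbR$. Fixing $b\in\mbR$ and translating the time axis by $b$, the triple $(w^b(r),c^b(w^b(r),r),\eta^b)$ restricted to $[b,b+L]$ solves exactly the life-cycle system \eqref{eq:life_cycle_sde_sol} driven by the interest rate $r$ (recall from Section~\ref{sec:life_cycle_partial_equilibria} that all life-cycle results apply for arbitrary $b$ up to this shift, and note that the OLG consumption \eqref{eq:olg_consumption} is precisely the shifted life-cycle consumption). Since $\|r\|_{\msL^\infty_{[b,b+L]}}\le\|r\|_{\msL^\infty_\mbR}$, every constant produced by Proposition~\ref{prop:life_cycle_consumption_stabillity}, Proposition~\ref{prop:life_cycle_wealth_stability} and Lemma~\ref{lem:LC_rate_stable} for this generation is bounded, uniformly in $b$, by the analogous quantity formed with $\|r\|_{\msL^\infty_\mbR}$ in place of $\|r\|_{\msL^\infty_{[b,b+L]}}$; in particular the life-cycle constant $A$ is dominated by $\mfA$.

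For the growth estimates I would argue as follows. Applying \eqref{eq:life_cycle_consump_bdd} for each $b$ gives $\|c^b(w^b,r)\|_{\msL^\infty_{[b,b+L]}}\le\kappa$, $\PP$-a.s.; taking first the supremum over $b$ and then the expectation yields \eqref{eq:olg_consumption_bdd}. For \eqref{eq:olg_wealth_apriori_bound}, apply the a priori bound \eqref{eq:wealth_apriori_bound} for each $b$, replace the exponential prefactor by $e^{\|r\|_{\msL^\infty_\mbR}}$, use that the supremum of a sum is at most the sum of the suprema to pull $\sup_b$ inside, and finally take the expectation, identifying the three resulting terms with $\|\bw_0\|_{\msL^1_\omega\msL^\infty_\mbR}$, $\|\bmeta\|_{\msL^1_\omega\msL^\infty_\mbR\msL^\infty_L}$ and $\kappa$ via \eqref{eq:w_0_norm}--\eqref{eq:income_norm}.

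The stability estimates follow by the same route. For each $b$, \eqref{eq:wealth_rate_stability} applied on $[b,b+L]$ gives the $\PP$-a.s. bound
\begin{equation*}
\|w^b(r)-w^b(\tilde r)\|_{\msL^\infty_{[b,b+L]}}\lesssim \mfA\Big(|w^b_b-\tilde w^b_b|+L\big((1+L)(|w^b_b|+\|\eta^b\|_{\msL^\infty_{[b,b+L]}}+\kappa)+\lambda\kappa\big)\|r-\tilde r\|_{\msL^\infty_\mbR}\Big).
\end{equation*}
Because $\mfA$ and $\|r-\tilde r\|_{\msL^\infty_\mbR}$ are deterministic, taking $\sup_b$ pathwise and then $\mbE$, and using $(1+L)\ge 1$ to absorb $L\lambda\kappa$ into $L(1+L)\lambda\kappa$, produces \eqref{eq:olg_wealth_rate_stability} with $\mfB=\|\bw_0\|_{\msL^1_\omega\msL^\infty_\mbR}+\|\bmeta\|_{\msL^1_\omega\msL^\infty_\mbR\msL^\infty_L}+(1+\lambda)\kappa$. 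Estimate \eqref{eq:olg_consump_rate_stability} comes out the same way from \eqref{eq:consump_rate_stability}, bounding the extra factor $e^{L\|\tilde r\|_{\msL^\infty_{[b,b+L]}}}$ by $e^{L\|\tilde r\|_{\msL^\infty_\mbR}}$ and then substituting \eqref{eq:olg_wealth_rate_stability}.

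This is essentially bookkeeping and I do not expect a genuine obstacle; the two points to be careful about are that the life-cycle estimates of Lemma~\ref{lem:LC_rate_stable} hold on the whole window $[b,b+L]$ for every $b$ (the subinterval iteration in its proof is valid for any $L>0$), and that the suprema over the uncountable index set $b\in\mbR$ are measurable — which one obtains from pathwise continuity of $b\mapsto w^b$ and $b\mapsto\eta^b$, so that the suprema may be computed over a countable dense set of birth times. The only mildly delicate step is matching the exact shape of the constants $\mfA$ and $\mfB$ asserted in the statement.
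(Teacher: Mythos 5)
Your proposal is correct and follows essentially the same route as the paper: apply the life-cycle bounds of Lemma~\ref{lem:LC_rate_stable} (and Proposition~\ref{prop:life_cycle_consumption_stabillity}) generation-by-generation on the shifted window $[b,b+L]$, dominate $\|r\|_{\msL^\infty_{[b,b+L]}}$ by $\|r\|_{\msL^\infty_\mbR}$, and then take the supremum over $b$ followed by the expectation, absorbing $L\lambda\kappa$ into $L(1+L)\mfB$ to match the stated constants. The paper carries this out explicitly only for \eqref{eq:olg_wealth_apriori_bound} and declares the stability bounds to follow mutatis mutandis, exactly as you do.
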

\begin{proof}
%
We prove \eqref{eq:olg_wealth_apriori_bound} in detail, and give only a sketch proof of \eqref{eq:olg_wealth_rate_stability} and \eqref{eq:olg_consump_rate_stability} as they follow mutatis mutandis.

Firstly, taking \eqref{eq:wealth_apriori_bound} from Lemma~\ref{lem:LC_rate_stable} in hand, up to suitably shifting the domain $[0,L]$ to $[b-L,b]$, we see that for all  $b\in \mbR$,
\begin{equation*}
	\begin{aligned}
		\|w^b(r)\|_{\msL^\infty_{[b-L,b]}} &\leq  (1+L)\exp\Big(\|r\|_{\msL^\infty_{[b-L,b]}}\Big) \left(|w^b_b| +  \|\eta^b\|_{\msL^\infty_{[b-L,b]}}+ \kappa \right),
	\end{aligned}
	\quad \mbP\text{-a.s.}
\end{equation*}
Taking suprema over $b\in \mbR$ followed by the expectation and using that $r$ is deterministic we obtain
\begin{equation*}
	\begin{aligned}
		\mbE\Big[\sup_{b\in \mbR}\|w^b(r)\|_{\msL^\infty_{[b-L,b]}} \Big]&\leq  (1+L)\exp\big(\|r\|_{\msL^\infty_\mbR}\big) \left(\mbE\Big[\sup_{b\in \mbR}|w^b_b| \Big] +  \mbE\Big[\sup_{b\in \mbR}\|\eta^b\|_{\msL^\infty_{[b-L,b]}}\Big]+ \kappa \right).
	\end{aligned}
\end{equation*}
This gives the estimate \eqref{eq:olg_wealth_apriori_bound} written in the more compact notation of \eqref{eq:w_0_norm} - \eqref{eq:income_norm}.

The proofs of \eqref{eq:olg_wealth_rate_stability} and \eqref{eq:olg_consump_rate_stability} follow similarly.
\end{proof}
Under the same Lipschitz continuity assumptions as employed in Section~\ref{sec:life_cycle} we have the following general, existence and uniqueness result for a general equilibrium interest rate. Recall, that throughout we equip the space of continuous functions $C(\mbR;\mbR)$ with the supremum norm $\|f\|_{L^\infty}\coloneqq \sup_{t\in \mbR}|f_t|$, under which it is a Banach space and implicitly if we write $f\in C(\mbR;\mbR)$ we mean that it is both continuous and has finite supremum norm.

\begin{thm}\label{thm:olg_well_posed}
 Let $L>0$, $u_1, u_2$ satisfy Assumption~\ref{ass:utility_reg_intro} for some $\kappa >0$, $\bnu = \{\nu_t\}_{t\in \mbR}$ be a flow of \emph{regular demographic measures} and $\bw_0  = \{w^b_b\}_{b\in \mbR}\subset\msL^1(\Omega;\mbR)$ be a family of integrable real valued random variable and $\mu,\,\sigma$ be sufficiently regular coefficients such that for each $b\in \mbR$ there exists a unique strong solution $\eta^b$ to \eqref{eq:olg_income}. Then, 
 \begin{enumerate}[label=\roman*)]
     \item  For any $r\in C(\mbR;\mbR)$  there exists an $\bar{L}_0\coloneqq \bar{L}(\bnu,\bw_0, \bmeta, \kappa,\lambda,\|r\|_{L^\infty}) \in (0,L]$ such that  there exists a unique family of processes $(\bw(r),\bc(r)) = (\{w^b(r)\}_{b\in \mbR},\{c^b\}_{b\in \mbR})$ with each $(w^b(r),c^b(r))\in \msL^1(\Omega;C_{[b,b+\bar{L}_0]})^{\otimes 2}$ describing a strong solution to \eqref{eq:olg_wealth}-\eqref{eq:olg_consumption}. In addition this pair are a solution to the associated optimal consumption problem \eqref{eq:olg_wealth_intro}-\eqref{eq:olg_payoff_discounted}. \label{it:olg_well_posed_specific}
     \item For any capital profile $K\in C^1(\mbR;\mbR)$, $R>0$ there exists an $\bar{L}_1 \coloneqq \bar{L}_1 (\bnu,\bw_0, \bmeta, \kappa,\lambda,R)\in (0,\bar{L}_0]$ and a unique interest rate $\bar{r}$ such that
     \begin{equation}\label{eq:olg_gen_eq_ball}
         \sup_{t\in \mbR} |\bar{r}_t| \leq R + 2\|\bnu\|_{W^{1,\infty}_{\mbR}\TV_{\bar{L}_1}} \|\bw_0\|_{\msL^\infty_\mbR \msL^1_\omega}
     \end{equation}
     and $\bar{r}$ is a general equilibrium interest rate for the associated overlapping generations model with lifespan $\bar{L}_1$, in the sense of Definition~\ref{def:olg_clearing_rate}. \label{it:olg_well_posed_general}
 \end{enumerate}
 %
 %
\end{thm}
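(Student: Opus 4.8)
The plan is to handle the two items in sequence, deriving (ii) from (i) together with a functional reformulation of the clearing condition in the spirit of Proposition~\ref{prop:life_cycle_rate_props}. For item (i), fix $r\in C(\mbR;\mbR)$. Each pair $(w^b,c^b)$ must solve the life-cycle system \eqref{eq:life_cycle_sde_sol} shifted to $[b,b+L]$ with $r$ restricted to that window, so Corollary~\ref{cor:life_cycle_well_posed} yields a unique strong solution on $[b,b+L^*_b]$ with $L^*_b=L^*(\kappa,\lambda,\|r\|_{\msL^\infty_{[b,b+L]}})$; since $\|r\|_{\msL^\infty_{[b,b+L]}}\le\|r\|_{L^\infty}$ and the threshold is monotone in its last argument, $\bar{L}_0:=L^*(\kappa,\lambda,\|r\|_{L^\infty})\wedge L$ works simultaneously for every $b$. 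Equivalently, and more in keeping with the later fixed-point arguments, one reruns the Banach contraction directly in $\msL^1_\omega\msL^\infty_{\mbR}\msL^\infty_{\bar{L}_0}$ using the stability bound \eqref{eq:olg_wealth_rate_stability} of Lemma~\ref{lem:olg_growth_stable}, choosing $\bar{L}_0$ so that its Lipschitz constant is $<1$, with the a priori bound \eqref{eq:olg_wealth_apriori_bound} placing the fixed point in the correct space. Optimality of each $(w^b,c^b)$ for \eqref{eq:olg_wealth_intro}--\eqref{eq:olg_payoff_discounted} is then Theorem~\ref{thm:life_cycle_optimal_rep} applied generation by generation.

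For item (ii) I first recast the equilibrium condition as a fixed point. Writing $\mbW^L_t(r)=\int_{t-L}^t\EE[w^b_t(r)]\,n(t,b)\,\dd b$ with the density of the regular demographic flow, differentiating in $t$ by the Leibniz rule and substituting $\tfrac{\dd}{\dd t}w^b_t=r_tw^b_t+\eta^b_t-c^b_t(w^b(r),r)$ from \eqref{eq:olg_wealth} gives
\begin{align*}
\frac{\dd}{\dd t}\mbW^L_t(r) = {}& r_t\,\mbW^L_t(r)+\EE[N^L_t]-\EE[C^L_t(r,\bw(r))] \\
&+\EE[w^t_t]\,n(t,t)-\EE[w^{t-L}_t(r)]\,n(t,t-L)+\int_{t-L}^t\EE[w^b_t(r)]\,\partial_t n(t,b)\,\dd b .
\end{align*}
Hence $r$ is a general equilibrium rate (Definition~\ref{def:olg_clearing_rate}) for $K\in C^1(\mbR;\mbR)$ with $\inf_t|K_t|>0$ if and only if $r=\Phi(r)$, where
\begin{align*}
\Phi(r)_t := \frac{1}{K_t}\Big[& \dot K_t-\EE[N^L_t]+\EE[C^L_t(r,\bw(r))]-\EE[w^t_t]n(t,t)\\
&+\EE[w^{t-L}_t(r)]n(t,t-L)-\int_{t-L}^t\EE[w^b_t(r)]\partial_t n(t,b)\,\dd b\Big].
\end{align*}
The forward implication is the displayed differentiation; for the converse one sets $Z_t:=\mbW^L_t(r)-K_t$, checks $\dot Z_t=r_tZ_t$, and uses that $Z$ is bounded — by \eqref{eq:olg_wealth_apriori_bound} and boundedness of $K$ — together with non-negativity of the equilibrium rate (so $\int_0^tr_s\,\dd s\to+\infty$) to force $Z\equiv 0$.

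It then remains to show $\Phi$ has a unique fixed point in the ball $B_\rho:=\{r\in C(\mbR;\mbR):\|r\|_{L^\infty}\le\rho\}$ with $\rho:=R+2\|\bnu\|_{W^{1,\infty}_{\mbR}\TV_{\bar{L}_1}}\|\bw_0\|_{\msL^\infty_{\mbR}\msL^1_\omega}$. For the self-mapping property one estimates each term of $\Phi(r)_t$: $\dot K_t/K_t$ is absorbed into $R$; $\EE[N^L_t]$, $\EE[C^L_t]$ (bounded via \eqref{eq:olg_consumption_bdd}) and the $\partial_t n$ integral each carry a factor $\bar{L}_1$; the newborn term $\EE[w^t_t]n(t,t)/K_t$ supplies the $\|\bnu\|\|\bw_0\|$ contribution to $\rho$; and the boundary (``death'') term $\EE[w^{t-L}_t(r)]n(t,t-L)/K_t$ is bounded using the growth estimate \eqref{eq:olg_wealth_apriori_bound} and the regularity built into a flow of regular demographic measures, with $\bar{L}_1$ taken small enough that it too fits under $\rho$. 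For the contraction one subtracts and invokes the stability estimates \eqref{eq:olg_wealth_rate_stability}--\eqref{eq:olg_consump_rate_stability} of Lemma~\ref{lem:olg_growth_stable}: every term of $\Phi(r)_t-\Phi(\tilde r)_t$ is then bounded by $C(\rho)\,\bar{L}_1\,\|r-\tilde r\|_{L^\infty}$, so a final shrinking of $\bar{L}_1$ — depending on $\bnu,\bw_0,\bmeta,\kappa,\lambda,R$, as advertised — makes $\Phi$ a contraction. Banach's theorem supplies the unique $\bar r\in B_\rho$, which by the reformulation is the unique general equilibrium rate obeying \eqref{eq:olg_gen_eq_ball}.

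I expect the genuine work to lie in the self-mapping step of item (ii): calibrating $\bar{L}_1$ and the radius so that $\Phi(B_\rho)\subseteq B_\rho$ with exactly the stated bound $\rho=R+2\|\bnu\|_{W^{1,\infty}_{\mbR}\TV_{\bar{L}_1}}\|\bw_0\|$, and in particular controlling the ``death'' term $\EE[w^{t-L}_t(r)]n(t,t-L)$, which — unlike the interior contributions — does not come with a free factor of $\bar{L}_1$ and so must be absorbed through the a priori wealth bound \eqref{eq:olg_wealth_apriori_bound} and the structure of the demographic flow. Everything else is a faithful transcription of the life-cycle arguments of Sections~\ref{sec:life_cycle} and~\ref{sec:life_cycle_gen_eq} into the $b$-indexed norms \eqref{eq:w_0_norm}--\eqref{eq:income_norm} underlying Lemma~\ref{lem:olg_growth_stable}.
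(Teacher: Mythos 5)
Your overall architecture is the same as the paper's: item (i) is the life-cycle well-posedness of Section~\ref{sec:life_cycle} applied generation by generation with a birth-independent lifespan threshold, and item (ii) is a Banach fixed-point argument for the rate, obtained by differentiating $t\mapsto \mbW^L_t(r)$ with the Leibniz rule (Lemma~\ref{lem:diff_two_param_function_integral}), substituting the wealth dynamics, and contracting on the ball \eqref{eq:olg_gen_eq_ball} via the growth and stability bounds of Lemma~\ref{lem:olg_growth_stable}. However, the step you yourself flag as the crux — the self-mapping calibration — is handled in a way that does not deliver the stated radius. You propose to control the boundary (``death'') term $\mbE[w^{t-L}_t(r)]\,n(t,t-L)$ directly by the a priori wealth bound \eqref{eq:olg_wealth_apriori_bound}. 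That bound is of order $(1+L)e^{\|r\|_{\msL^\infty_\mbR}}\big(\|\bw_0\|+\|\bmeta\|+\kappa\big)$, and neither $\|\bmeta\|_{\msL^1_\omega\msL^\infty_\mbR\msL^\infty_L}$ nor $\kappa$ nor the exponential factor vanishes as $\bar{L}_1\to 0$; so this term alone forces a ball of radius roughly $R+\|\bnu\|e^{\rho}(\|\bw_0\|+\|\bmeta\|+\kappa)$, not $R+2\|\bnu\|_{W^{1,\infty}_{\mbR}\TV_{\bar L_1}}\|\bw_0\|_{\msL^\infty_\mbR\msL^1_\omega}$, and trying to enlarge the radius to compensate makes the estimate circular (the bound then depends on $e^{\rho}$ with $\rho$ defined through itself). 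The paper's proof avoids this by expanding $w^{t-L}_t = w^{t-L}_{t-L}+\int_{t-L}^t\big(r_u w^{t-L}_u+\eta^{t-L}_u-c^{t-L}_u\big)\dd u$ inside the boundary term: the only contributions without a factor of $L$ are then the initial-wealth terms $\mbE[w^b_b]\big(n(t,t)-n(t,t-L)\big)$, which produce exactly the $2\|\bnu\|\,\|\bw_0\|$ in the radius, while every remaining term carries a factor $L$ (or $L^2$) and is pushed below $R$ by shrinking $\bar L_1$. Your proof needs this decomposition; without it the self-map step fails for the advertised ball.

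A second, smaller deviation: your converse direction (``fixed point $\Rightarrow$ equilibrium'') argues that $Z_t=\mbW^L_t(r)-K_t$ solves $\dot Z_t=r_tZ_t$, is bounded, and vanishes because ``the equilibrium rate is non-negative so $\int_0^t r_s\dd s\to+\infty$''. Neither non-negativity nor divergence of the integral is available — rates in the ball \eqref{eq:olg_gen_eq_ball} may be negative, and even $r\equiv 0$ admits non-zero bounded solutions of $\dot Z=rZ$ — so this step as written does not close. (In the life-cycle analogue, Proposition~\ref{prop:life_cycle_rate_props}, the paper anchors $Z$ by the hypothesis $\mbE[w_0]=K_0$ at the initial time; on the whole line the paper cites that argument rather than reproducing it, so if you want a genuinely complete converse you need an anchoring device, not the boundedness/positivity argument you give.) Your division by $K_t$, with the extra hypothesis $\inf_t|K_t|>0$, is a harmless variant of the paper's treatment of constant $K$.
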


\begin{proof}
We first show \ref{it:olg_well_posed_specific}. Observe that for each $b\in \mbR$, the equation satisfied by $w^b$ is exactly \eqref{eq:triple_equations}, with $c,\eta$ replaced by $c^b,\, \eta^b$ respectively. Hence, it follows from exactly the arguments as the proof of Corollary~\ref{cor:life_cycle_well_posed} that for any interest rate path $r\in C(\mbR;\mbR)$, one has existence and uniqueness of $w^b$ for any $b\in \mbR$ given that we choose some $\bar{L}_0\coloneqq \bar{L}_0(\kappa,\lambda,w^b_b,\beta^b,\|r\|_{L^\infty}) \in (0,L]$ sufficiently small. Since $\bar{L}_0$ does not depend on $b\in \mbR$, by an abuse of notation we obtain existence and uniqueness of the family $\{w^b(r)\})_{b\in \mbR}$ for $\bar{L}_0 \coloneqq \bar{L}_0(\kappa,\lambda,\bnu, \bw_0,\bmeta,\|r\|_{L^\infty}) \in (0,L]$. From now on any lifespan used in the proof is implicitly taken with a minimum against $\bar{L}_0$.

    To show existence and uniqueness of a general equilibrium interest rate we follow a similar strategy as laid out in Section~\ref{sec:life_cycle_gen_eq}. To this end we first find a functional expression which exactly characterises general equilibrium interest rates, see \eqref{eq:olg_rate_equation}. Then we use a fixed point argument appealing to the Marshallian stability  obtained by Lemma~\ref{lem:LC_rate_stable} to show existence and uniqueness of a general equilibrium rate in the set \eqref{eq:olg_gen_eq_ball} for $\bar{L}_1 \in (0,\bar{L}_0]$ sufficiently small. We give a detailed proof in the case that $K$ is constant, i.e. $t\mapsto K_t\equiv K\in \mbR$. The proof in the time dependent case being similar. 

We first obtain a functional identity which must be satisfied by any general equilibrium interest rate. Applying Lemma~\ref{lem:diff_two_param_function_integral} to the function $(t,b)\mapsto f(t,b) = \mbE[w^b_t]$, using the equation \eqref{eq:olg_wealth} to evaluate $w^{t-L}_t$ and that $w^t_t \equiv w^b_b$, one finds
   \begin{align*}
       \frac{\dd}{\dd t} \mbW_t(r) =&\,  \mbE[w^{b}_b]\left(n(t,t)-n(t,t-L)\right)- \int_{t-L}^t \mbE\left[r_u w^{t-L}_u + \eta^{t-L}_u - c^{t-L}_u \right] \dd u \,n(t,t-L) \\
       &\, + \int_{t-L}^t \left(\mbE\big[w^b_b\big]+\int_b^t \mbE\big[r_u w^b_u + \eta^b_u - c^b_u \big] \dd u \,\right)  \partial_t n(t,b) \dd b\\
       &\, + \int_{t-L}^t \mbE\big[r_t w^b_t + \eta^b_t - c^b_t \big] n(t,b)\dd b,
   \end{align*}
   where for simplicity we write $w^b_s \coloneqq  w^b_s(r)$ and $c^b_s \coloneqq c(w^b(r),r)$ for all $b\in \mbR$ and $s\neq b$. We recognize the first integrand of the third term as $\int_{t-L}^t \mbE[r_t w^b_t] n(t,b) \dd b = r_t \mbW_t$. Hence, we can rearrange the above, and arguing as in the proof of Proposition~\ref{prop:life_cycle_rate_props}, any general equilibrium rate must be such that, for all $b \in \mbR$,
   \begin{equation}\label{eq:olg_rate_equation}
       \begin{aligned}
       r_t = \,&\int_{t-L}^t \mbE\left[r_u w^{t-L}_u  + \eta^{t-L}_u - c^{t-L}_u \right] \dd u \,n(t,t-L)  \\
      &\, -\mbE\big[w^{b}_{b}\big]\left(n(t,t)-n(t,t-L) \right)-\int_{t-L}^t \mbE\big[\eta^b_t - c^b_t \big]  n(t,b) \dd b\\
      &\,- \int_{t-L}^t \left(\mbE\big[w^b_b\big] +\int_b^t \mbE\big[r_u w^b_u + \eta^b_u - c^b_u \big] \dd u \,\right)   \partial_t n(t,b) \dd b,
            \end{aligned}
   \end{equation}
   where we have reinstated the explicit dependence of the coefficients on $r$ so that it is clear that \eqref{eq:olg_rate_equation} is a fixed point problem for the market clearing rate. To show that such a fixed point exists, we define the map $\Phi_L :C(\mbR;\mbR) \to C(\mbR;\mbR)$ such that 
   \begin{equation}\label{eq:olg_phi_def}
       \begin{aligned}
           \Phi_L(r)_t \coloneqq  &\mapsto \,\int_{t-L}^{t} \mbE\left[r_u w^{t-L}_u  + \eta^{t-L}_u - c^{t-L}_u  \right] \dd u \,n(t,t-L)  \\
      &\,-\mbE\big[w^b_b\big]\left(n(t,t) -n(t,t-L)\right) -\int_{t-L}^{t} \mbE\big[\eta^b_{t} - c^b_{t}  \big]  n(t,b) \dd b\\
      &\,- \int_{t-L}^{t}\left(\mbE\big[w^b_b\big] +\int_b^{t} \mbE\big[r_u w^b_u + \eta^b_u - c^b_u \big] \dd u \,\right)   \partial_t n(t,b) \dd b,
       \end{aligned}
   \end{equation}
and for any $R>0$, the closed and bounded set, 
\begin{equation}\label{eq:olg_rate_closed_set}
    \fB^{\bnu,\bw_0}_R(L) \coloneqq \left\{ r:\mbR\to \mbR \,:\, \sup_{t\in \mbR}|r_t| \leq R+2\|\bnu\|_{W^{1,\infty}_{\mbR}\TV_L} \|\bw_0\|_{\msL^\infty_\mbR \msL^1_\omega} \right\},\quad \text{for }\, L>0.
\end{equation}
Since $ \fB^{\bnu,\bw_0}_R(L)$ is a closed and bounded subset of the complete metric space $(C(\mbR;\mbR);\|\,\cdot\,\|_{L^\infty})$ it is itself a complete metric space.
   
Since a market clearing rate must satisfy $r = \Phi_L(r)$ and vice versa we seek to apply the Banach fixed point theorem. Firstly, taking absolute values on both sides of \eqref{eq:olg_phi_def} we find
  \begin{equation*}
	\begin{aligned}
	|\Phi_L(r)_t| \leq &\, L|n(t,t-L)|\sup_{u\in [t-L,t]}\left( |r_u| \mbE\left[|w^{t-L}_u |\right] +  \mbE\left[|\eta^{t-L}_u|\right]  + \mbE\big[|c^{t-L}_u |\big]\right) \\
		&+ \sup_{b\in \mbR}\mbE\big[|w^b_b|\big] \Big(|n(t,t)|+n(t,t-L)|\Big) +  L \sup_{b\in [t-L,t]} \Big(|n(t,b)|\big(\mbE\big[|\eta^b_t|\big] +  \mbE\big[|c^b_t |\big]\big)\Big) \\
		&+ L\sup_{b\in [t-L,t]}\left(|\partial_t n (t,b)| \Big(\mbE\big[|w^b_b|\big] + L \sup_{u\in [b,t]}\big( |r_u| \mbE\big[|w^b_u|\big] + \mbE\big[|\eta^b_u|\big] + \mbE\big[|c^b_u|\big]\big)\Big)\right).
	\end{aligned}
	\end{equation*}
Then, taking suprema on both sides over $t\in \mbR$, estimating the suprema of expectations by the expectation of the suprema and consolidating terms and defining the quantity
\begin{equation*}
    \mfA(r,\bw,\bmeta,\bc) \coloneqq \|r\|_{\msL^\infty_\mbR}   \|\bw\|_{\msL^1_\omega \msL^\infty_{\mbR}\msL^\infty_L } +  \|\bmeta\|_{ \msL^1_\omega \msL^\infty_{\mbR}\msL^\infty_L}  + \|\bc\|_{\msL^1_\omega \msL^\infty_{\mbR}\msL^\infty_L }
\end{equation*}
we obtain the bound
 \begin{equation*}
	\begin{aligned}
		\sup_{t\in \mbR}|\Phi_L(r)_t| \leq &\, L\|\bnu\|_{W^{1,\infty}_{\mbR}\TV_L} \mfA(r,\bw,\bmeta,\bc) \\
		&+ 2\|\bw_0\|_{\msL^1_\omega \msL^\infty_\mbR}\|\bnu\|_{W^{1,\infty}_{\mbR}\TV_L} +  L \|\bnu\|_{W^{1,\infty}_{\mbR} \TV_L}\Big(\|\bmeta\|_{ \msL^1_\omega \msL^\infty_{\mbR}\msL^\infty_L}  + \|\bc\|_{\msL^1_\omega \msL^\infty_{\mbR}\msL^\infty_L }\Big) \\
		&+ L \|\bnu\|_{W^{1,\infty}_{\mbR}\TV_L} \Big(\|\bw_0\|_{\msL^1_\omega \msL^\infty_\mbR}+  L \, \mfA(r,\bw,\bmeta,\bc)\Big)\\
		\lesssim \, &+ 2\|\bw_0\|_{\msL^1_\omega \msL^\infty_\mbR}\|\bnu\|_{W^{1,\infty}_{\mbR}\TV_L} \\
		& + L \|\bnu\|_{W^{1,\infty}_{\mbR}\TV_L} \Big(\|\bw_0\|_{\msL^1_\omega \msL^\infty_\mbR}+  (1+L)\,  \mfA(r,\bw,\bmeta,\bc)\Big)
	\end{aligned}
\end{equation*}

So that, applying the estimates \eqref{eq:olg_wealth_apriori_bound} and \eqref{eq:olg_consumption_bdd} from Lemma~\ref{lem:olg_growth_stable}, consolidating terms gives us the bound
\begin{equation*}
	\begin{aligned}
		\sup_{t\in \mbR}|\Phi_L(r)_t| \lesssim \, &+ 2\|\bw_0\|_{\msL^1_\omega \msL^\infty_\mbR}\|\bnu\|_{W^{1,\infty}_{\mbR}\TV_L} \\
		& + L \|\bnu\|_{W^{1,\infty}_{\mbR}\TV_L}  (1+L)^2\Big( \big(1+\|r\|_{\msL^\infty_\mbR}\big) e^{\|r\|_{\msL^\infty_{\mbR}}} \big(\|\bw_0\|_{\msL^1_\omega \msL^\infty_\mbR } +    \|\bmeta\|_{ \msL^1_\omega \msL^\infty_{\mbR} \msL^\infty_{L}} + \kappa \Big).
	\end{aligned}
\end{equation*}
Finally, using the assumption that $r\in \mfB^{\bnu,\bw_0}_R(L)$, we obtain the estimate
\begin{equation*}
	\begin{aligned}
		\sup_{r\in \mfB^{\bnu,\bw_0}_R(L) }\|\Phi_L(r)\|_{\msL^\infty_{\mbR}} \lesssim \, &+ 2\|\bw_0\|_{\msL^1_\omega \msL^\infty_\mbR}\|\bnu\|_{W^{1,\infty}_{\mbR}\TV_L} \\
		& + L \|\bnu\|_{W^{1,\infty}_{\mbR}\TV_L}  (1+L)^2\Big( \big(1+R+2\|\bnu\|_{W^{1,\infty}_{\mbR}\TV_L} \|\bw_0\|_{\msL^\infty_\mbR \msL^1_\omega} \big) \\
        &\qquad \times e^{R+2\|\bnu\|_{W^{1,\infty}_{\mbR}\TV_L} \|\bw_0\|_{\msL^\infty_\mbR \msL^1_\omega} } \big(\|\bw_0\|_{\msL^1_\omega \msL^\infty_\mbR } +    \|\bmeta\|_{ \msL^1_\omega \msL^\infty_{\mbR} \msL^\infty_{L}} + \kappa \Big).
	\end{aligned}
\end{equation*}
 As a result, there exists some $L_1\coloneqq L_1(\kappa,\lambda,\bnu, \bw_0,\bmeta,\lambda,R) \in (0,\bar{L}_0)$ such that for all $L \in (0,L_1]$, $\Phi_L$  maps the closed and bounded set $ \fB^{\bnu,\bw_0}_R(L_1)$ to itself.

To show that $\Phi_L$ is a contraction on $  \fB^{\bnu,\bw_0}_R(L)$ for some $L$ possibly strictly smaller than $L_1$ we obtain a natural stability estimate. First, since we keep the initial wealth of each generation identical, independently of the interest rate, we see directly that for $r,\,\tilde{r} \in C(\mbR;\mbR)$,
 \begin{align*}
       \Phi_L(r)_t-\Phi_L(\tilde{r})_t =& \int_{t-L}^{t} (r_u-\tilde{r}_u) \mbE\big[w^{t-L}_u\big] + \tilde{r}_u\mbE\big[w^{t-L}_u(r) - w^{t-L}_u(\tilde{r})\big]\dd u \,n(t,t-L) \\
       &- \int_{t-L}^{t}\mbE\big[c^{t-L}_u(w(r),r)-c^{t-L}_u(w(\tilde{r}),\tilde{r})\big] \dd u \,n(t,t-L)\\
       &+ \int_{t-L}^{t} \mbE\big[ c^b_{t}(w(r),r)- c^b_t(w(\tilde{r}),\tilde{r})\big] \ n(t,b)\dd b\\
       &- \int_{t-L}^{t} \int_b^{t} (r_u -\tilde{r}_u)\mbE\big[w^b_u(r)\big] +\tilde{r}_u \mbE\big[w^b_u(r)-w^b_u(\tilde{r})\big] \dd u \, \partial_t n(t,b)\dd b\\
       &+\int_{t-L}^t \int_b^t \mbE\big[c^b_u(w(r),r)-c^b_u(w(\tilde{r}),\tilde{r})\big] \dd u \, \partial_t n(t,b)\dd b.
   \end{align*}
Taking absolute values, suprema over $t\in \mbR$ on both sides and consolidating norms on the right hand side we find the bound
 \begin{align*}
       \|\Phi_L(r)-\Phi_L(\tilde{r})\|_{\msL^\infty_{\mbR}} \leq &\,  L(1+L)\|\bnu\|_{W^{1,\infty}_{\mbR}\TV_L}\bigg(\|r-\tilde{r}\|_{\msL^\infty_\mbR} \|\bw\|_{\msL^1_\omega \msL^\infty_{\mbR}\msL^\infty_L} \\
       &\qquad \qquad \qquad \qquad \qquad  + \|\tilde{r}\|_{\msL^\infty_{\mbR}}\|\bw(r) - \bw(\tilde{r})\|_{\msL^1_\omega \msL^\infty_{\mbR}\msL^\infty_L} \bigg)\\
       &+L(2+L)\|\bnu\|_{W^{1,\infty}_{\mbR}\TV_L}  \|\bc(\bw(r),r)-\bc(\bw(\tilde{r}),\tilde{r})\|_{\msL^1_\omega \msL^\infty_{\mbR}\msL^\infty_L}.
   \end{align*}
Therefore, invoking the growth estimate \eqref{eq:olg_wealth_apriori_bound} and the stability bounds \eqref{eq:olg_wealth_rate_stability}-\eqref{eq:olg_consump_rate_stability} with $\bw_0 = \tilde{\bw_0}$, we find, for 
\begin{align*}
    \mfA(r,L)\coloneqq &\, \mfA(r,L,\lambda,\kappa) \coloneqq \exp\left(L\big(\|r\|_{\msL^\infty_\mbR} + \lambda \kappa^2 e^{(1+L)\|r\|_{\msL^\infty_{\mbR}} } \big)\right),\\
    \mfB \coloneqq  &\, \|\bw_0\|_{\msL^1_\omega \msL^\infty_\mbR } +    \|\bmeta\|_{ \msL^1_\omega \msL^\infty_{\mbR} \msL^\infty_{L}} + 1 
\end{align*}
that one has
%
%
   \begin{align*}
       \|\Phi_L(r)-\Phi_L(\tilde{r})\|_{\msL^\infty_{\mbR}} \lesssim_{\lambda,\kappa} \,  L(1+L)^2e^{\|r\|_{\msL^\infty_{\mbR}}+L\|\tilde{r}\|_{\msL^\infty_\mbR}}\bigg(&\, \|\bnu\|_{W^{1,\infty}_{\mbR}\TV_L} \\
       &\, + \left(\|\tilde{r}\|_{\msL^\infty_{\mbR}} + \|\bnu\|_{W^{1,\infty}_{\mbR}\TV_L}  \right)  \mfA(r,L)\bigg)\mfB \|r-\tilde{r}\|_{\msL^\infty_\mbR} 
   \end{align*}
So that taking $r\neq \tilde{r} \in \fB^{\bnu,\bw_0}_R(L)$ for any $L\in (0,L_1]$, we see that there exists a constant $C \coloneqq C(\kappa,\lambda,R,\|\bnu\|_{W^{1,\infty}_{\mbR}\TV_L},\|\bw_0\|_{\msL^\infty_\mbR \msL^1_\omega}, L) >0$ which is monotone non-decreasing in all its arguments (and double-exponentially growing in its final four arguments), such that 
%
       %
       %
       %
%
%
\begin{align*}
       \|\Phi_L(r)-\Phi_L(\tilde{r})\|_{\msL^\infty_{\mbR}} \leq&\,  L C \left( \|\bw_0\|_{\msL^1_\omega \msL^\infty_\mbR }+ \|\bmeta\|_{ \msL^1_\omega \msL^\infty_{\mbR} \msL^\infty_{L}} +1\right) \|r-\tilde{r}\|_{\msL^\infty_\mbR}.
   \end{align*}
Therefore, there exists an $L_0(\kappa,\lambda,R,\|\bnu\|_{W^{1,\infty}_{\mbR}\TV_L},\|\bw_0\|_{\msL^\infty_\mbR \msL^1_\omega}, L) \in (0,L_1]$ such that for all $L\in (0,L_0]$ 
%
%
%
%
the map $\Phi_{L}$ is a contraction from the closed and bounded set $ \fB^{\bnu,\bw_0}_R(L) \subset C(\mbR;\mbR)$ to itself. Hence, applying the Banach fixed point theorem, there exists a unique fixed point $\bar{r}\in  \fB^{\bnu,\bw_0}_R$, concluding the proof. 
\end{proof}
%
%
The following proposition shows that the interest rate is stable with respect to the relevant inputs of wealth at birth $(w_0(b))_{b\in \mbR}$, income $(\eta^b)_{b\in \mbR}$ and demographic distribution $(\rho_t)_{t\in \mbR}$.

\begin{prop}\label{prop:olg_rate_stable}
 Let $L>0$, $u_1,$ and $u_2$ satisfy Assumption~\ref{ass:utility_reg_intro} for some $\kappa>0$, $\bmeta \coloneqq \{\eta^b\}_{b\in \mbR},\, \tilde{\bmeta} \coloneqq \{\tilde{\eta}^\}_{b\in \mbR}$ be two collections of measurable stochastic processes with each $\eta^b,\, \tilde{\eta}^b\in \msL^0(\Omega;C_{[b,L+b]})$, $\bnu = \{\nu_t\}_{t\in \mbR},\, \tilde{\bnu} = \{\tilde{\nu}_t\}_{t\in \mbR}$ be two flows of \emph{regular demographic measures}, $\bw_0  = \{w^b_b\}_{b\in \mbR},\, \tilde{\bw}_0  = \{\tilde{w}^b_b\}_{b\in \mbR}$ be two families of integrable real valued random variables and $R>0$. Then, there exists a lifespan 
 \begin{equation*}
     \bar{L}_0 \in \left(0,\bar{L}_1(\bmeta,\bnu,\bw_0,\kappa,\lambda,R)\wedge \bar{L}_1(\tilde{\bmeta}, \tilde{\bnu}, \tilde{\bw}_0,\kappa,\lambda,R) \right)
 \end{equation*}
 such that for any $L\in (0,\bar{L}_0]$ there exists unique general equilibrium rates $\bar{r}\coloneqq \bar{r}(\bmeta,\bnu,\bw_0)$ and $\tilde{r}\coloneqq \tilde{r}(\tilde{\bmeta},\tilde{\bnu},\tilde{\bw}_0)$ in the set
 \begin{equation*}
     \mfB_R(L)\coloneq \left\{ r: \mbR\to \mbR\,:\, \sup_{t\in \mbR}|r_t| \leq R+ \left(\|\bnu\|_{W^{1,\infty}_{\mbR} \text{TV}_L} \|\bw_0\|_{\msL^\infty_{\mbR}\msL^1_{\omega}} \vee \|\tilde{\bnu}\|_{W^{1,\infty}_{\mbR} \text{TV}_L} \|\tilde{\bw}_0\|_{\msL^\infty_{\mbR}\msL^1_{\omega}} \right)\right\}.
 \end{equation*}
 In addition, there exists a constant $C\coloneqq C(\kappa,\lambda,\bw_0,\tilde{\bw}_0,\bmeta,\tilde{\bmeta},\bnu,\tilde{\bnu},R)>0$ such that
\begin{equation}\label{eq:olg_rate_stability}
\begin{aligned}
    \|r-\tilde{r}\|_{\msL^\infty_\mbR} \leq C \bigg(& \|\bmeta-\tilde{\bmeta}\|_{\msL^\infty_{\mbR} \msL^1_\omega \msL^\infty_L} +\|\bnu-\tilde{\bnu}\|_{W^{1,\infty}_{\mbR}\TV_L} + \|\bw_0-\tilde{\bw}_0\|_{\msL^\infty_{\mbR}\msL^1_\omega} \bigg).
    \end{aligned}
\end{equation}
\end{prop}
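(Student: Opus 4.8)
The plan is to use the fixed-point characterisation of equilibrium rates from the proof of Theorem~\ref{thm:olg_well_posed} and to compare the two fixed-point maps attached to the two data sets. Write $\Phi_L$ for the map in \eqref{eq:olg_phi_def} built from $(\bw_0,\bmeta,\bnu)$, with $n,\partial_t n$ the densities of $\bnu$, and $\tilde\Phi_L$ for the analogous map built from $(\tilde\bw_0,\tilde\bmeta,\tilde\bnu)$. First I would re-run the self-mapping and contraction estimates from that proof, now with the ball $\mfB_R(L)$ in place of the ball used there: this gives a lifespan $\bar L_0$, not exceeding $\bar L_1(\bmeta,\bnu,\bw_0,\kappa,\lambda,R)\wedge\bar L_1(\tilde\bmeta,\tilde\bnu,\tilde\bw_0,\kappa,\lambda,R)$, such that for every $L\in(0,\bar L_0]$ both $\Phi_L$ and $\tilde\Phi_L$ map $\mfB_R(L)$ into itself and are $\tfrac12$-contractions there; by Banach's fixed point theorem this yields the claimed existence and uniqueness of $\bar r$ and $\tilde r$ inside $\mfB_R(L)$. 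Only $\bar L_0$ has to be shrunk to serve both data sets at once.

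Next, using $\bar r=\Phi_L(\bar r)$ and $\tilde r=\tilde\Phi_L(\tilde r)$, the triangle inequality together with the contraction bound for $\Phi_L$ gives
\begin{equation*}
\|\bar r-\tilde r\|_{\msL^\infty_\mbR}\le\|\Phi_L(\bar r)-\Phi_L(\tilde r)\|_{\msL^\infty_\mbR}+\|\Phi_L(\tilde r)-\tilde\Phi_L(\tilde r)\|_{\msL^\infty_\mbR}\le\tfrac12\|\bar r-\tilde r\|_{\msL^\infty_\mbR}+\|\Phi_L(\tilde r)-\tilde\Phi_L(\tilde r)\|_{\msL^\infty_\mbR},
\end{equation*}
so that $\|\bar r-\tilde r\|_{\msL^\infty_\mbR}\le 2\|\Phi_L(\tilde r)-\tilde\Phi_L(\tilde r)\|_{\msL^\infty_\mbR}$. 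It then remains to estimate this ``data gap'', namely the two maps evaluated at the \emph{same} interest rate $\tilde r$.

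To bound the gap I would subtract the two instances of \eqref{eq:olg_phi_def} term by term. Three sources of discrepancy arise: the demographic densities $n$ vs.\ $\tilde n$ and $\partial_t n$ vs.\ $\partial_t\tilde n$ (which enter both as outer multipliers $n(t,t),\,n(t,t-L),\,\partial_t n(t,b)$ and inside the nested time integrals, with differences controlled by $\|\bnu-\tilde\bnu\|_{W^{1,\infty}_\mbR\TV_L}$); the income processes (controlled by $\|\bmeta-\tilde\bmeta\|_{\msL^\infty_\mbR\msL^1_\omega\msL^\infty_L}$); and the initial wealth (controlled by $\|\bw_0-\tilde\bw_0\|_{\msL^\infty_\mbR\msL^1_\omega}$). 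The wealth and consumption families also differ, because at the common rate $\tilde r$ the family $\bw(\tilde r)$, hence $\bc$, solves \eqref{eq:olg_wealth}--\eqref{eq:olg_consumption} with $(\bw_0,\bmeta)$ while $\tilde\bw(\tilde r)$ solves it with $(\tilde\bw_0,\tilde\bmeta)$. The auxiliary stability estimate I would need is
\begin{equation*}
\|\bw(\tilde r)-\tilde\bw(\tilde r)\|_{\msL^1_\omega\msL^\infty_\mbR\msL^\infty_L}+\|\bc(\bw(\tilde r),\tilde r)-\bc(\tilde\bw(\tilde r),\tilde r)\|_{\msL^1_\omega\msL^\infty_\mbR\msL^\infty_L}\lesssim\|\bw_0-\tilde\bw_0\|_{\msL^\infty_\mbR\msL^1_\omega}+\|\bmeta-\tilde\bmeta\|_{\msL^\infty_\mbR\msL^1_\omega\msL^\infty_L},
\end{equation*}
which one proves exactly as Lemma~\ref{lem:olg_growth_stable} (that lemma handles perturbations of the rate, this one perturbations of the data): in \eqref{eq:olg_wealth} wealth depends linearly and Lipschitz-continuously on $\eta^b$ and $w^b_b$, consumption is Lipschitz in wealth by Proposition~\ref{prop:life_cycle_consumption_stabillity}, and a Banach-fixed-point/Gr\"onwall argument closes the loop for $L\le\bar L_0$. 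Feeding this together with the a priori bounds \eqref{eq:olg_wealth_apriori_bound}--\eqref{eq:olg_consumption_bdd} into the term-by-term comparison produces a constant $C\coloneqq C(\kappa,\lambda,\bw_0,\tilde\bw_0,\bmeta,\tilde\bmeta,\bnu,\tilde\bnu,R)>0$, monotone non-decreasing in $R$ and in the displayed norms, with
\begin{equation*}
\|\Phi_L(\tilde r)-\tilde\Phi_L(\tilde r)\|_{\msL^\infty_\mbR}\le C\left(\|\bmeta-\tilde\bmeta\|_{\msL^\infty_\mbR\msL^1_\omega\msL^\infty_L}+\|\bnu-\tilde\bnu\|_{W^{1,\infty}_\mbR\TV_L}+\|\bw_0-\tilde\bw_0\|_{\msL^\infty_\mbR\msL^1_\omega}\right);
\end{equation*}
combined with $\|\bar r-\tilde r\|_{\msL^\infty_\mbR}\le 2\|\Phi_L(\tilde r)-\tilde\Phi_L(\tilde r)\|_{\msL^\infty_\mbR}$ this is precisely \eqref{eq:olg_rate_stability}.

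The hard part will be the auxiliary data-stability of the wealth and consumption families — uniformly in $b\in\mbR$ and at a fixed interest-rate path — since Lemma~\ref{lem:olg_growth_stable} as stated only records dependence on $\bw_0$ and $r$; and, in parallel, keeping the demographic bookkeeping under control, since $n$ and $\partial_t n$ appear in \eqref{eq:olg_phi_def} both as outer factors and inside nested integrals, so one must estimate products of them without letting $C$ blow up. Exactly as in the proof of Theorem~\ref{thm:olg_well_posed}, both the boundedness of $C$ and the $\tfrac12$-contraction step are secured by taking $\bar L_0$ sufficiently small as a function of $\kappa,\lambda,R$ and the norms of $\bw_0,\tilde\bw_0,\bmeta,\tilde\bmeta,\bnu,\tilde\bnu$.
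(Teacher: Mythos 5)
Your proposal is correct and its core is the same as the paper's: both characterise the equilibrium rates through the fixed-point identity \eqref{eq:olg_rate_equation}/\eqref{eq:olg_phi_def}, compare the two identities term by term using the growth and stability bounds for wealth and consumption, and kill the $\|r-\tilde r\|$-proportional contributions by taking the lifespan small. The packaging differs slightly: you freeze the rate at $\tilde r$, write $\|\bar r-\tilde r\|_{\msL^\infty_\mbR}\le 2\|\Phi_L(\tilde r)-\tilde\Phi_L(\tilde r)\|_{\msL^\infty_\mbR}$ via the $\tfrac12$-contraction, and then compare data only; this forces you to supply an auxiliary stability estimate for the wealth and consumption families with respect to $(\bw_0,\bmeta)$ \emph{at a fixed rate}, which you correctly flag as not literally contained in Lemma~\ref{lem:olg_growth_stable} and sketch plausibly (it is a routine Gr\"onwall/fixed-point argument given Proposition~\ref{prop:life_cycle_consumption_stabillity}). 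The paper instead subtracts the two equilibrium identities with the rate \emph{and} the data varying simultaneously, so it can invoke Lemma~\ref{lem:LC_rate_stable} (whose bounds already combine the rate difference with the initial-wealth difference) and then absorb the $\|r-\tilde r\|$ terms directly by shrinking $\bar L_0$ — which is exactly the role your contraction constant plays. The paper, like you, only treats one perturbation (initial wealth) in detail and declares income and demography "similar", so neither route is more complete on that score; yours trades longer displayed estimates for one extra (easy) lemma, and both are valid.
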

\begin{proof}
    The proof leverages the stability of consumption and wealth as a function of income and initial wealth (Lemma~\ref{lem:LC_rate_stable}) and the explicit expression for the market clearing rate \eqref{eq:olg_rate_equation}. To avoid repetitive detail, we focus on the stability in initial wealth, the proofs for stability in income and demographic measure being similar. Taking the difference between $r\coloneqq r(\bw_0)$ and $\tilde{r}\coloneqq r(\tilde{\bw}_0)$, with both defined by \eqref{eq:olg_rate_equation} with existence and uniqueness of each in $\mfB_R$ guaranteed by Theorem~\ref{thm:olg_well_posed}, we find, for $t,\, b\in \mbR$ 
    \begin{align*}
        r_t-\tilde{r}_t = & \mbE\big[w^{b}_{b}-\tilde{w}^{b}_{b}\big] \left(n(t,t)-n(t,t-L)\right)\\
        &-\left(\int_{t-L}^{t} (r_u-\tilde{r}_u) \mbE\left[w^{t-L}_u(r)\right] + \tilde{r}_u\mbE\left[w^{t-L}_u(r) - w^{t-L}_u(\tilde{r})\right]\dd u \,\right)n(t,t-L) \\
       &- \int_{t-L}^{t}\mbE\big[c^{t-L}_u(r,w(r))-c^{t-L}_u(\tilde{r},w(\tilde{r}))\big] \dd u \,n(t,t-L)\\
       &+ \int_{t-L}^{t} \mbE\big[ c^b_{t}(r,w(r))- c^b_t(\tilde{r},w(\tilde{r}))\big] \ n(t,b)\dd b\\
&- \int_{t-L}^{t} \mbE\big[w^b_b(r) -w^b_b(\tilde{r})\big] \, \partial_t n(t,b)\dd b\\
       &- \int_{t-L}^{t} \left(\int_b^{t} (r_u -\tilde{r_u})\mbE[w^b_u(r)] +\tilde{r}_u \mbE[w^b_u(r)-w^b_u(\tilde{r})] \dd u \,\right) \partial_t n(t,b)\dd b\\
       &+\int_{t-L}^t \int_b^t \mbE\big[c^b_u(r,w)-c^b_u(\tilde{r},w(\tilde{r}))\big] \dd u \, \partial_t n(t,b)\dd b.
    \end{align*}
    Therefore, appealing to  Lemma~\ref{lem:LC_rate_stable} in particular \eqref{eq:wealth_apriori_bound}, \eqref{eq:wealth_rate_stability} and \eqref{eq:consump_rate_stability}, up to suitably shifting the interval of definition from $[0,L]$ to $[b,b+L]$,  we obtain the estimate 
    \begin{align*}
    	\sup_{t \in \mbR} |r_t-\tilde{r}_t| \,&\leq  \,  \sup_{b\in \mbR}\mbE\Big[\, \big|w^{b} - \tilde{w}^{b}\big|\,\Big] + L\|\bnu\|_{W^{1,\infty}_{\mbR}\TV_L} \sup_{t\in \mbR}\sup_{u\in [t-L,t]} |r_u-\tilde{r}_u| \sup_{t \in \mbR}\mbE\Big[\left\|w^{t-L}(r)\right\|_{\msL^\infty_{[t-L,t]}}\Big]\\
    	&\qquad + L\|\bnu\|_{W^{1,\infty}_{\mbR}\TV_L} \sup_{t\in \mbR}\sup_{u\in [t-L,t]} |\tilde{r}_u|  \sup_{t\in \mbR} \mbE\Big[\left\|w^{t-L}(r) - w^{t-L}(\tilde{r})\right\|_{\msL^\infty_{[t-L,t]}}\Big]\\
    	&\qquad + L \|\bnu\|_{W^{1,\infty}_{\mbR}\TV_L} \sup_{t\in \mbR} \mbE\Big[\left\|c^{t-L}(r,w(r))-c^{t-L}(\tilde{r},w(\tilde{r}))\right\|_{\msL^\infty_{[t-L,t]}}\Big]\\
    	&\qquad + L \|\bnu\|_{W^{1,\infty}_{\mbR}\TV_L}\sup_{t\in \mbR}\sup_{b\in [t-L,t]} \mbE\Big[ \big|c^b_{t}(r,w(r))- c^b_t(\tilde{r},w(\tilde{r}))\big|\Big]\\
    	&\qquad + L^2 \|\bnu\|_{W^{1,\infty}_{\mbR}\TV_L} \sup_{t\in \mbR} \sup_{b\in [t-L,t]}\sup_{u\in [b,t]}\mbE\big[|w^b_u(r)|\big]  \sup_{t\in \mbR} \sup_{b\in [t-L,t]} \|r-\tilde{r}\|_{\msL^\infty_{[b,t]}}  \\
    	&\qquad +L^2\|\bnu\|_{W^{1,\infty}_{\mbR}\TV_L}\sup_{t\in \mbR} \sup_{b \in [t-L,t]}\|\tilde{r}\|_{C_{[b,t]}} \sup_{t\in \mbR} \sup_{b \in [t-L,t]}\mbE\Big[\big\|w^b(r)-w^b(\tilde{r})\big\|_{\msL^\infty_{[t-L,t]}}\Big] \\
    	&\qquad +L^2 \|\bnu\|_{W^{1,\infty}_{\mbR}\TV_L}\sup_{t\in \mbR}\sup_{b\in [t-L,t]} \mbE\Big[\big\|c^b(r,w)-c^b(\tilde{r},w(\tilde{r}))\big\|_{\msL^\infty_{[b,t]}}\Big] \\
    	&\lesssim_{\lambda,\kappa}  \, \left\|\bw_{0} - \tilde{\bw}_{0}\right\|_{\msL^\infty_{\mbR}\msL^1_\omega} \\
    	&\quad + L \|\bnu\|_{W^{1,\infty}_{\mbR}\TV_L}  (1+L)e^{\|r\|_{\msL^\infty_\mbR}} \left(\|\bw_{0}\|_{\msL^\infty_{\mbR}\msL^1_\omega}+ \|\bmeta\|_{\msL^\infty_{\mbR} \msL^1_\omega L^\infty_{L}} +\kappa\right)\|r-\tilde{r}\|_{\msL^\infty_\mbR} \\
    	&\quad   +L\|\bnu\|_{W^{1,\infty}_{\mbR}\TV_L} \|\tilde{r}\|_{\msL^\infty_\mbR}   e^{\|r\|_{\msL^\infty_\mbR}}\left\|\bw_{0}-\tilde{\bw}_{0}\right\|_{\msL^\infty_\mbR \msL^1_\omega} \\
    	&\quad  +L^2\|\bnu\|_{W^{1,\infty}_{\mbR}\TV_L} \|\tilde{r}\|_{\msL^\infty_\mbR}  e^{ \|r\|_{\msL^\infty_\mbR}} \left(1+\|\bw_{0}\|_{\msL^\infty_{\mbR}\msL^1_\omega}+ \|\bmeta\|_{\msL^\infty_{\mbR} \msL^1_\omega L^\infty_{L}}\right) \|r-\tilde{r}\|_{\msL^\infty_\mbR}\\
    	&\quad +2L \|\bnu\|_{W^{1,\infty}_{\mbR}\TV_L} e^{(1+L)\left(\|r\|_{\msL^\infty_\mbR}+\|\tilde{r}\|_{\msL^\infty_\mbR}\right) }\left\|\bw_{0}-\tilde{\bw}_{0}\right\|_{\msL^\infty_\mbR \msL^1_\omega}\\
    	&\quad +  2L \|\bnu\|_{W^{1,\infty}_{\mbR}\TV_L}e^{(1+L)\left(\|r\|_{\msL^\infty_\mbR}+\|\tilde{r}\|_{\msL^\infty_\mbR}\right) }\left(1+ L\left(1+\|\bw_{0}\|_{\msL^\infty_{\mbR}\msL^1_\omega}+ \|\bmeta\|_{\msL^\infty_{\mbR} \msL^1_\omega L^\infty_{L}}\right) \right)\|r-\tilde{r}\|_{\msL^\infty_\mbR} \\
    	& \quad +  L^2 \|\bnu\|_{W^{1,\infty}_{\mbR}\TV_L}(1+L)e^{\|r\|_{\msL^\infty_\mbR}} \left(\|\bw_0\|_{\msL^\infty_\mbR \msL^1_\omega} +  \|\bmeta\|_{\msL^\infty_{\mbR} \msL^1_\omega L^\infty_{L}}+ \kappa \right)\|r-\tilde{r}\|_{\msL^\infty_\mbR}\\
    	&\quad +L^2\|\bnu\|_{W^{1,\infty}_{\mbR}\TV_L}\|\tilde{r}\|_{\msL^\infty_\mbR}  e^{\|r\|_{\msL^\infty_\mbR}} \left\|\bw_{0}-\tilde{\bw}_{0}\right\|_{\msL^\infty_\mbR \msL^1_\omega}\\
    	&\quad + L^3\|\bnu\|_{W^{1,\infty}_{\mbR}\TV_L}\|\tilde{r}\|_{\msL^\infty_\mbR}e^{\|r\|_{\msL^\infty_\mbR}}\left(1+\|\bw_0\|_{\msL^\infty_\mbR \msL^1_\omega} +  \|\bmeta\|_{\msL^\infty_{\mbR} \msL^1_\omega L^\infty_{L}}\right) \|r-\tilde{r}\|_{\msL^\infty_\mbR}\\
    	&\quad +L^2 \|\bnu\|_{W^{1,\infty}_{\mbR}\TV_L} e^{(1+L)\left(\|r\|_{\msL^\infty_\mbR}+\|\tilde{r}\|_{\msL^\infty_\mbR}\right) } \sup_{t\in \mbR}\left\|\bw_{0}-\tilde{\bw}_{0}\right\|_{\msL^\infty_\mbR \msL^1_\omega}\\
    	&\quad +  L^2 \|\bnu\|_{W^{1,\infty}_{\mbR}\TV_L}e^{(1+L)\left(\|r\|_{\msL^\infty_\mbR}+\|\tilde{r}\|_{\msL^\infty_\mbR}\right) }\left(1+ L\left(1+\|\bw_0\|_{\msL^\infty_\mbR \msL^1_\omega} +  \|\bmeta\|_{\msL^\infty_{\mbR} \msL^1_\omega L^\infty_{L}}\right)\right)\|r-\tilde{r}\|_{\msL^\infty_\mbR} 
    \end{align*}
    Since $r,\,\tilde{r} \in \mfB_R(L)$ it holds that
    \begin{equation*}
    	\|r\|_{\msL^\infty_{\mbR}} \vee \|\tilde{r}\|_{\msL^\infty_{\mbR}} \leq R + \left(\|\bnu\|_{W^{1,\infty}_{\mbR} \text{TV}_L} \|\bw_0\|_{\msL^\infty_{\mbR}\msL^1_{\omega}} \vee \|\tilde{\bnu}\|_{W^{1,\infty}_{\mbR} \text{TV}_L} \|\tilde{\bw}_0\|_{\msL^\infty_{\mbR}\msL^1_{\omega}} \right).
    \end{equation*}
    Hence, observing that all terms on the right hand side of the long inequality bounding $\sup_{t\in \mbR} |r_t-\tilde{r}_t| = \|r-\tilde{r}\|_{\msL^\infty_{\mbR}}$ are multiplied by either $\|r-\tilde{r}\|_{\msL^\infty_{\mbR}}$ or $\|\bw_0-\tilde{\bw}_0\|_{\msL^\infty_\mbR \msL^1_\omega}$, there exists a 
    \begin{equation*}
    	\bar{L}_0 \coloneqq \bar{L}_0(\bnu, \bw_0,\tilde{\bw}_0,\bmeta, R,\lambda,\kappa) \in \left(0,\bar{L}_1(\bmeta,\bnu,\bw_0,\kappa,\lambda,R)\wedge \bar{L}_1(\tilde{\bmeta}, \tilde{\bnu}, \tilde{\bw}_0,\kappa,\lambda,R) \right)
    \end{equation*}
    
     sufficiently, small such that for some constant $C\coloneqq C(\bnu, \bw_0,\tilde{\bw}_0,\bmeta, R,\lambda,\kappa)>0$ one has
    \begin{align*}
    	\|r-\tilde{r}\|_{\msL^\infty_\mbR} \leq C e^{(1+L)\big(R + \big(\|\bnu\|_{W^{1,\infty}_{\mbR} \text{TV}_L} \|\bw_0\|_{\msL^\infty_{\mbR}\msL^1_{\omega}} \vee \|\tilde{\bnu}\|_{W^{1,\infty}_{\mbR} \text{TV}_L} \|\tilde{\bw}_0\|_{\msL^\infty_{\mbR}\msL^1_{\omega}} \big)\big) } \|\bw_0 -\tilde{\bw}_0\|_{\msL^\infty_\mbR \msL^1_\omega} .
    \end{align*}
Proofs of the other stability estimates follow in a similar manner.
\end{proof}
%
%
%
%
\section{Numerical Experiments}\label{sec:numerics}
We provide numerical results on the partial equilibrium case in the life cycle model \eqref{thm:life_cycle_optimal_rep} where we hold the interest rate to be a constant through the life cycle. We train a neural network to approximate the optimal consumption policy given in 
\eqref{eq:life_cycle_problem}. In contrast to solving the HJB equation using PDE solver based on finite-elements method, our approach using deep learning method does not require imposing boundary conditions and is generative by nature which makes it easy to interpolate the consumption policy. It is worth mentioning that our goal is to showcase how we can numerically solve a stochastic control problem using neural networks and we do not focus on finding the best neural network architecture or training procedure that achieve the lowest loss possible. We will leave the more general case of finding equilibrium interest rates and simulating the OLG models using neural networks for future work. Besides providing validation to the theories we have developped in this work, we believe that our approach using neural networks might be of independent interest to researchers working on building economic models using dynamical systems.  

We now describe our loss function, training and evaluation of the consumption policy network, following the notation used in Section \ref{sec:life_cycle}. Our loss function comes from approximating the total utility given in \eqref{eq:life_cycle_problem} by choosing a number of time steps so that the integral inside the expecation can be approximated by a finite sum. More precisely, we take the discretised stochastic income with geometric Brownian motion in the following recursive form:
\begin{equation*}
    \eta_{t+1} = \eta_{t} + \mu 
    \eta_t \Delta t + \sigma \eta_t z, \ \ z \sim \cN(0, (\Delta t)^2 ),
\end{equation*}
where $\Delta t : = L /M$ is the length of the time step given the life span $T$ and the number of time steps $M$.

We define the utility function $u$ using the CRRA function:
\begin{equation*}
u(x, \gamma) = 
    \begin{cases}
    \frac{x^{1-\gamma_1}}{1-\gamma_1}, & \text{if } \gamma_1 \neq 1, \, x > 0 \\
    \log(x), & \text{if } \gamma_1 = 1,  \, x > 0,\\
    \end{cases}
\end{equation*} 
and the terminal utility:
\begin{equation*}
    e^{-\rho L} \lambda u(w_L, \gamma_2).
\end{equation*} 
To ensure numerical stability when we differentiate $u$ when $x$ is small, we replace $x$ by a preset threshold which is a small positive number when $x$ is smaller than that threshold. We also add a quadratic penalty for terminal utility when terminal wealth $w_L$ is negative. This helps the training to stay away from solutions that give rise to negative terminal wealth.   

We define our consumption network $\alpha$ to take three inputs: time point $t$, wealth level $w_t$ and income level $\eta_t$. We compute the wealth at time $t$ recursively by the following expression using our consumption policy network $\alpha$:
\begin{equation}\label{eq:wealth process}
    w_t = w_{t-1} + r w_{t-1} \Delta t + \eta_t \Delta t - \alpha(t-1, w_{t-1}, \eta_{t-1}) \Delta t.
\end{equation}

The loss function is defined as an approximation to the expectation of the accumulated utilities over the income process:
\begin{equation}\label{loss function: stochastic income, fixed interest rate}
    J(\alpha) := \frac{1}{N} \sum_{i=0} ^N J^{(i)}(\alpha),
\end{equation}
where 
\begin{equation*}
    J^{(i)}(\alpha) := - \sum_{t=0} ^L e^{-\rho t}u(\alpha(t,w^{(i)} _t, \eta_t ^{(i)}),\gamma_1) \Delta t + e^{-\rho L} \lambda u(w^{(i)} _L, \gamma_2)
\end{equation*}
that captures the utility based on income process for agent $i$. In the implementation, we simulate different income processes by varying the random seed for sampling $z$ from Gaussian $\cN(0, (\Delta t)^2 )$ for each agent $i, \, 1 \leq i \leq N$. We allow the initial income $y^{(i)} _0$ to be sampled from either a uniform or a log-normal distribution, and the initial wealth $w^{(i)} _0$ from a uniform or a Pareto distribution. 

We adopt a simplified ResNet architecture \cite{he2015deep} for our neural network which consists only of convolutional layers and shortcut connections with one projection layer at the top and a projection head at the bottom. The number of residual blocks is set to be a model parameter that can be chosen according to the problem. We have chosen the ResNet architecture due to its simpleness and its capability to mitigate gradient vanishing during training. To ensure that the output of the consumption network is non-negative, we choose the Softplus activation function in the final layer. In doing so, we avoid having to impose a hard constraint in the optimisation. 

We find it helpful for convergence of training to pretrain the consumption network for a certain of epochs to minimise the Mean Squared Error (MSE) between the consumption trajectory of the stochastic model and that given by the deterministic closed-form solution. We choose the AdamW optimiser \cite{loshchilov2018decoupled} and apply learning rate decay. We train the consumption policy network on $4000$ agents and then apply the trained policy to datasets obtained from sampling initial income and wealth from different distributions. We set the pretraining epochs to $400$ and the main training epochs to $500$. The learning rate decay step is taken to be $150$ with a decay factor of $0.6$. We set the number of residual blocks in our neural network to $2$ and the number of hidden units in each fully connected layer to $25$. We apply gradient clipping with norm $1.0$.

First, we initialise wealth and income with given values: $\eta_0 = 1,\, w_0 = 10$. In Figure \ref{fig:stochastic-solution-deterministic-initial-wealth-income}, we plot the income, consumption and wealth trajectories of a population of $4000$ agents through the life cycle while highlighting $50$ of them.
\begin{figure}[!htbp]
  \scriptsize
  \centering
  \begin{subfigure}[t]{0.45\textwidth}
    \centering
    \includegraphics[width=\linewidth]{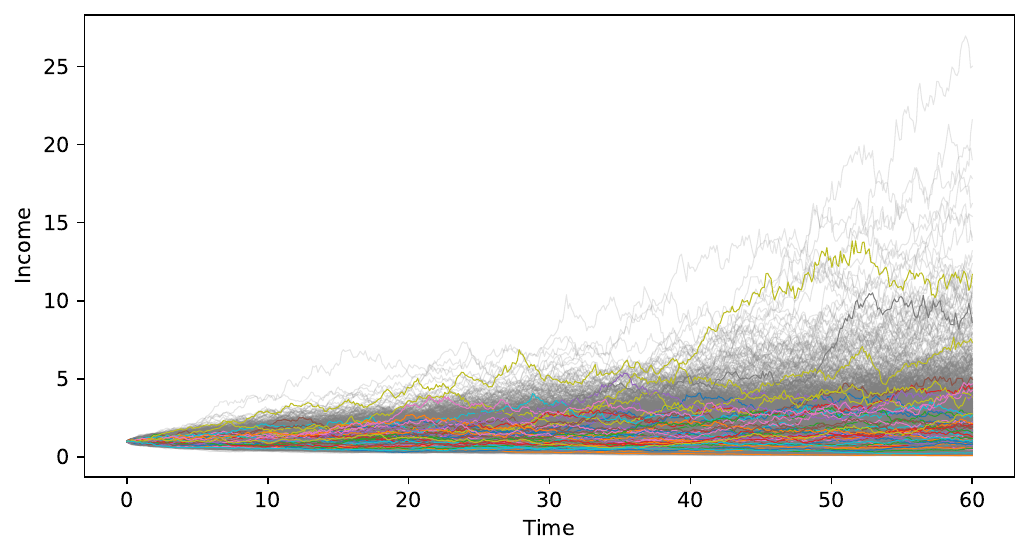}
    \caption{Income trajectories.}
    \label{fig:stochastic-income-tranjectories}
  \end{subfigure}
  \hfill
  \begin{subfigure}[t]{0.45\textwidth}
    \centering
    \includegraphics[width=\linewidth]{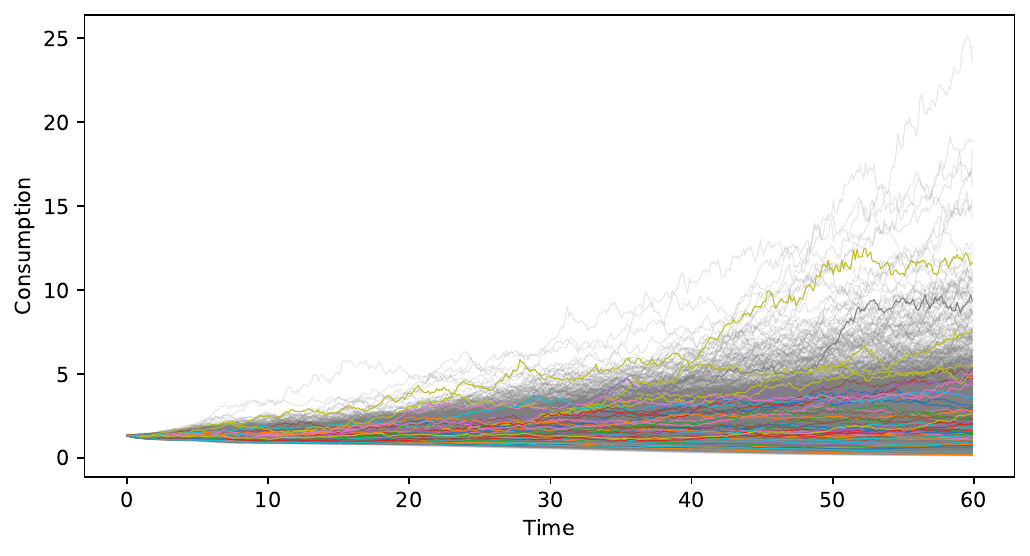}
    \caption{Consumption trajectories.}
    \label{fig:stochastic-consumption-trajectories}
  \end{subfigure}
  \hfill
  \begin{subfigure}[t]{0.45\textwidth}
    \centering
    \includegraphics[width=\linewidth]{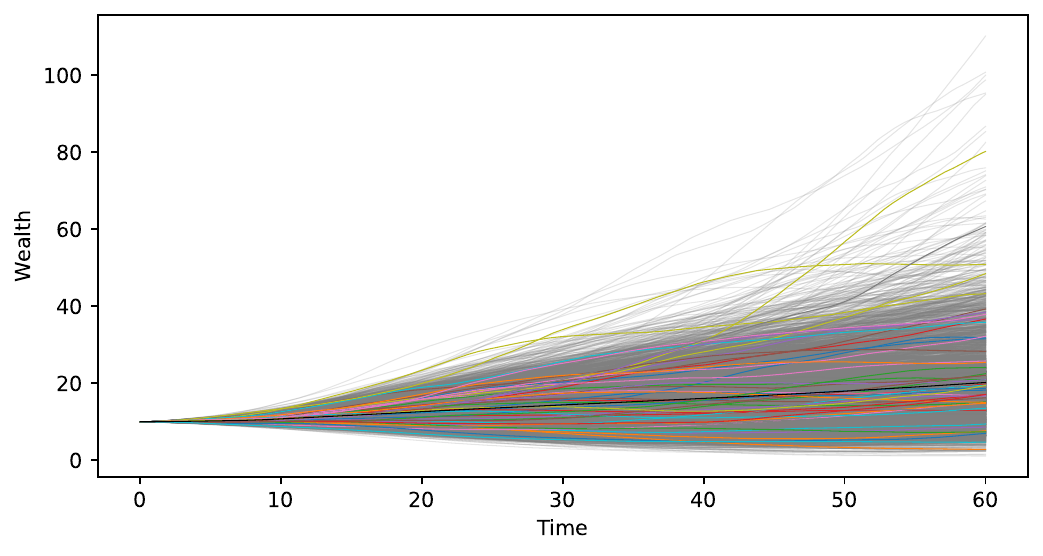}
    \caption{Wealth trajectories}
    \label{fig:stochastic-wealth-trajectories}
  \end{subfigure}
  \caption{Plots of the income, consumption and wealth trajectories using our trained consumption policy on deterministic initial wealth $w_0 = 10.0$ and income $\eta_0 = 1.0$ for $4000$ agents (in grey lines) while highlighting $50$ of them using coloured lines. Other parameters we set are: $\delta = 0.02,\, r= 0.03,\, \gamma_1 = 2,\, \gamma_2 = 2, \, \mu = 0.01, \, \lambda = 100, \, L = 60, \, \sigma = 0.1$.}
  \label{fig:stochastic-solution-deterministic-initial-wealth-income}
\end{figure}
From Figure \ref{fig:stochastic-solution-deterministic-initial-wealth-income} and the data associated to the plots, we can observe that the consumption levels keep up with the income level and no agents have negative wealth at the end of the life cycle. Agents that have higher incomes tend to consume more throughout life time and are still able to accumulate larger wealth.   

We then apply the trained policy to a dataset with initial income and initial wealth sampled from log-normal and Pareto distributions respectively. We plot the distribution of wealth of the population of the same size as before at three age groups in Figure \ref{fig:wealth-distributions}.
\begin{figure}[!htbp]
  \scriptsize
  \centering
  \begin{subfigure}[t]{0.33\textwidth}
    \centering
    \includegraphics[width=\linewidth]{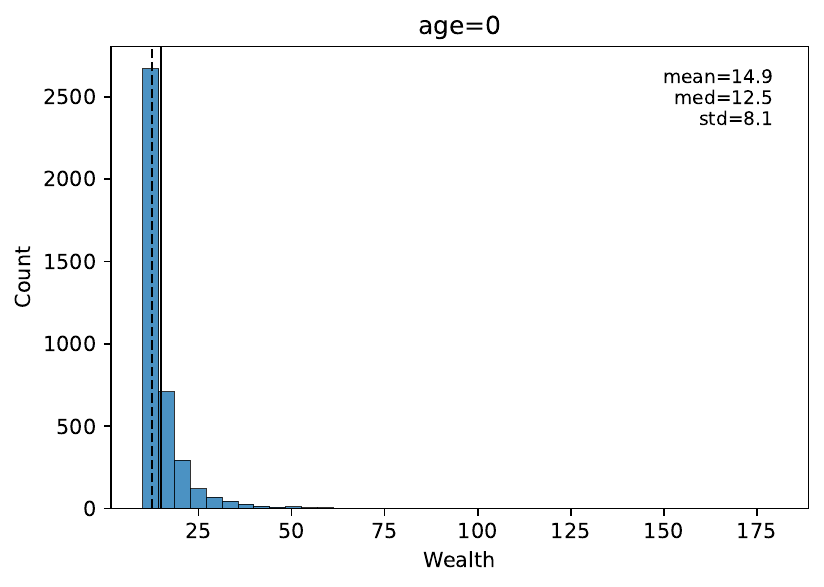}
    \caption{}
    \label{fig:wealth-distributions-age-0}
  \end{subfigure}
  \hfill
  \begin{subfigure}[t]{0.33\textwidth}
    \centering
    \includegraphics[width=\linewidth]{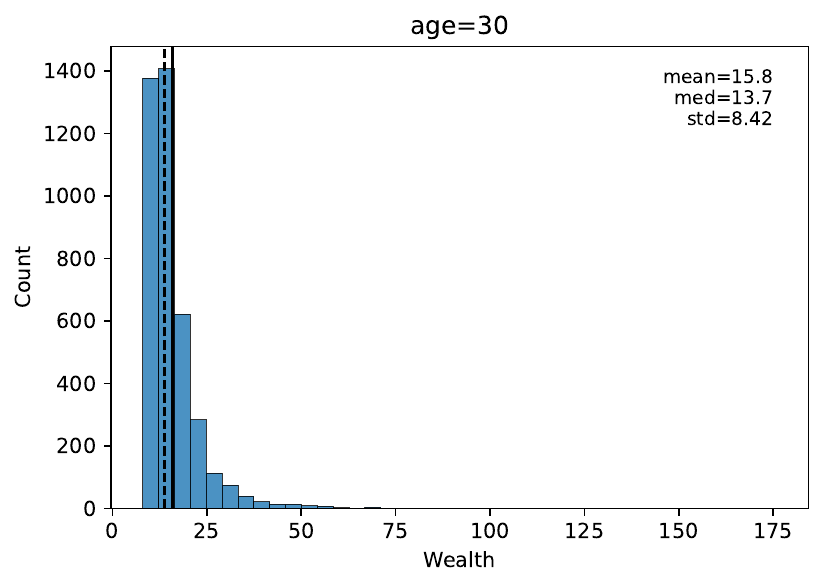}
    \caption{}
    \label{fig:wealth-distributions-age-30}
  \end{subfigure}
  \hfill
  \begin{subfigure}[t]{0.33\textwidth}
    \centering
    \includegraphics[width=\linewidth]{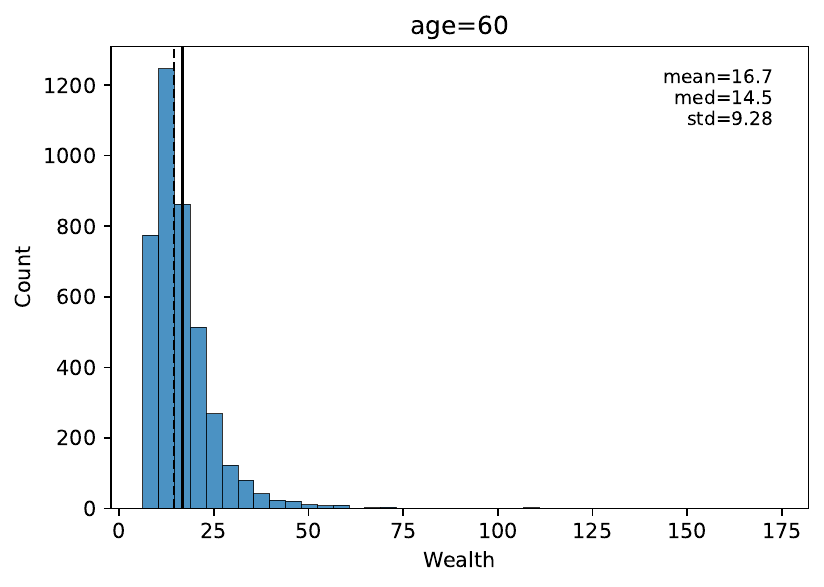}
    \caption{}
    \label{fig:wealth-distributions-age-60}
  \end{subfigure}
  \caption{Distributions of wealth at age $0, 30, 60$ in the life cycle when initial wealth are sampled from the log-normal distribution with mean $0$ and standard deviation $1$, and initial wealth sampled from a Pareto distribution with scale $10$ and shape $3$.}
  \label{fig:wealth-distributions}
\end{figure}
From Figure \ref{fig:wealth-distributions}, we can observe that the distribution of wealth among the population appears to shift from the initial Pareto distribution at age $0$ to a more Gaussian-like distribution with heavier tails at the age of $60$.

We also plot the value function and the consumption policy at given income levels for three age groups, as shown in Figure \ref{fig:policy-and-value-plots}. 
\begin{figure}[!htbp]
  \scriptsize
  \centering
  \begin{subfigure}[t]{0.45\textwidth}
    \centering
    \includegraphics[width=\linewidth]{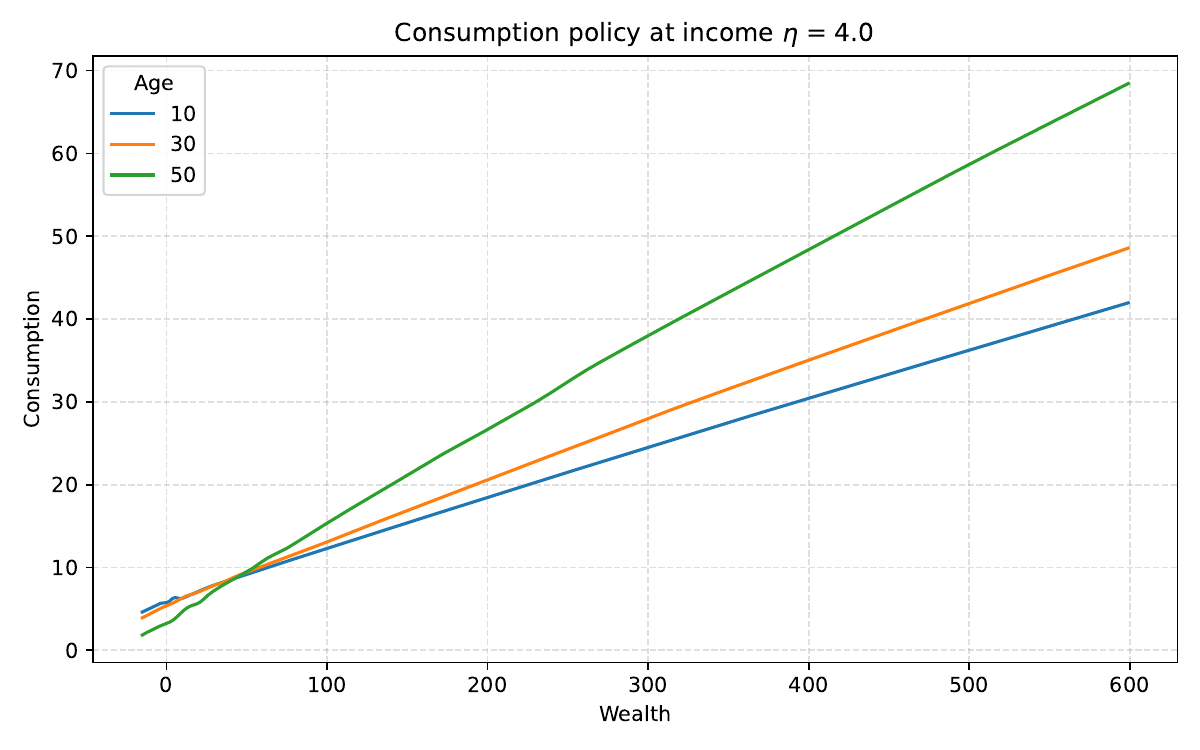}
    \caption{}
    \label{fig:consumption-policy-income-4}
  \end{subfigure}
  \hfill
  \begin{subfigure}[t]{0.45\textwidth}
    \centering
    \includegraphics[width=\linewidth]{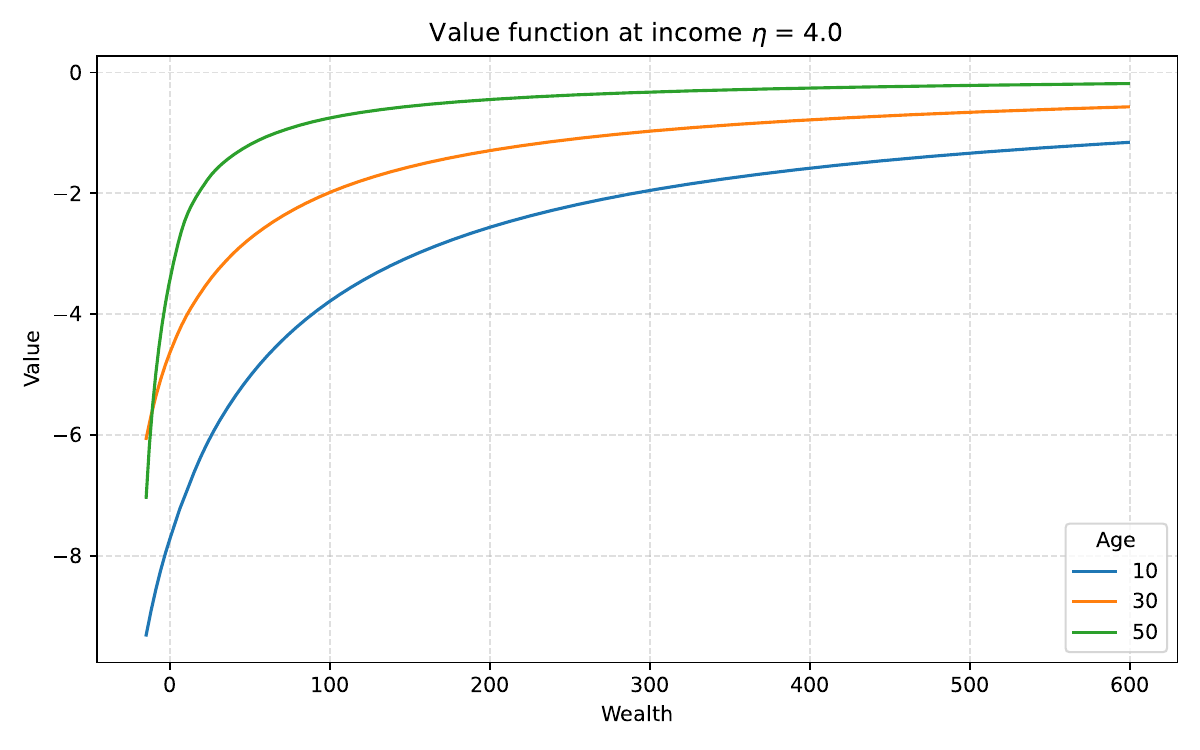}
    \caption{}
    \label{fig:value-function-income-4}
  \end{subfigure}
  \hfill
  \begin{subfigure}[t]{0.45\textwidth}
    \centering
    \includegraphics[width=\linewidth]{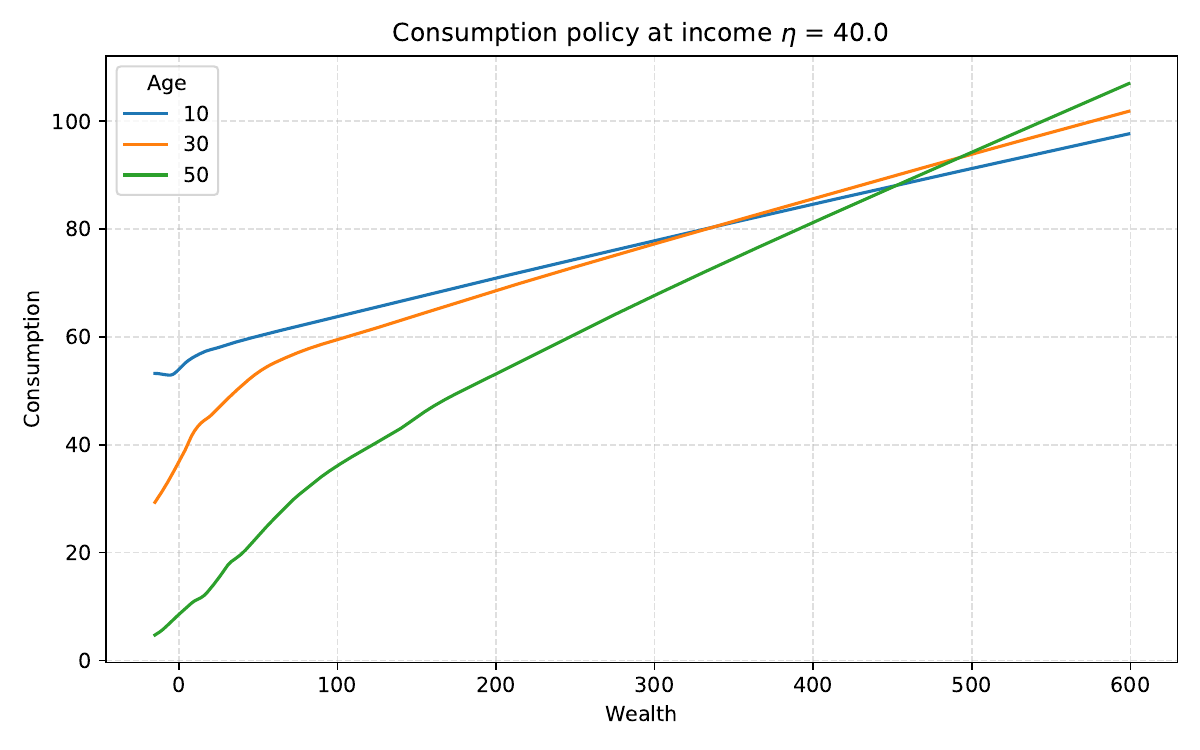}
    \caption{}
    \label{fig:consumption-policy-income-40}
  \end{subfigure}
  \hfill
    \begin{subfigure}[t]{0.45\textwidth}
    \centering
    \includegraphics[width=\linewidth]{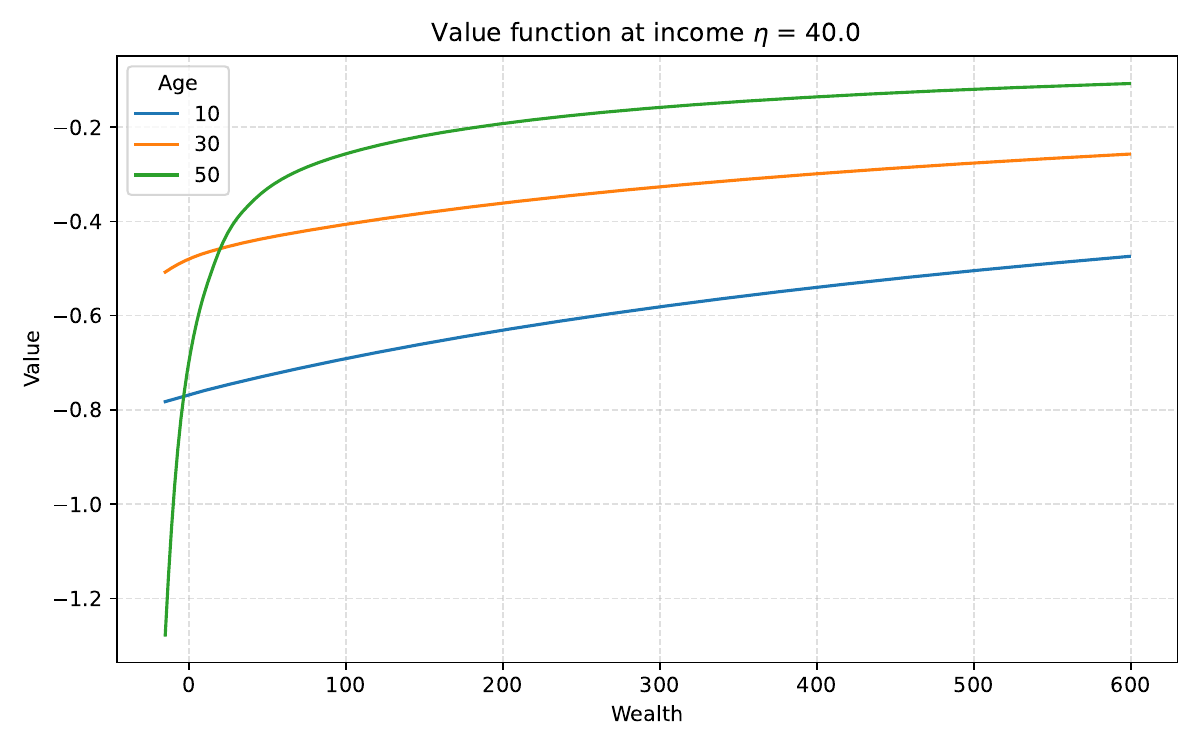}
    \caption{}
    \label{fig:value-function-income-40}
  \end{subfigure}
  \hfill
  \caption{Plots of the consumption policies and value functions at two income levels $\eta = 4.0$ and $\eta = 40.0$ with age $10$, $30$ and $50$.}
  \label{fig:policy-and-value-plots}
\end{figure}
We make a few observations from Figure \ref{fig:policy-and-value-plots}: 
\begin{enumerate}
    \item [1.] At a given income level, the slope of the consumption level is positively correlated to age. In other words, the older the age, the higher the propensity to consume with respect to an increase in wealth. We can also observe a diminished return on the marginal propensity to consume as wealth increases from around $0$ at income level $\eta = 40$.
    \item [2.] At both income levels, there appears to be a wealth level where the consumption levels of different age groups intersect and then reverse their order. We think this can be interpreted as a tipping point where the priority between consumption and bequest flips for different age groups. This tipping point tends to happen at a higher wealth level when the income level is higher.
    \item [3.] At both income levels, the values are higher for groups with older ages at most of the wealth levels except when the wealth level is small or negative (e.g. smaller than $25$ when $\eta = 40$). The slopes of the value curves follow a reversed order, i.e. younger age groups tend to have larger marginal increase of value over wealth, except when the wealth level is small or negative. We think this might be explained by the model parameters we have set which encourages consumption over bequest.    
\end{enumerate}

\newpage
\appendix

\section{Explicit solutions to linear BSDEs}
We recall here a result providing an explicit solution to certain linear BSDEs. It is this representation that plays a central role in the derivation of the optimal consumption profile.
The following proposition is a special case formulation of \cite[Prop. 6.2.1]{pham_09_continuous}. 

\begin{prop}\label{prop: linear BSDE}
Let $\{\beta_t\}_{t\in [0,T]}$ be a Brownian motion supported on a probability space $(\Omega,\cF,\{\cF_t\}_{t\in [0,T]},\PP)$, suppose $a$ is bounded and progressively measurable processes valued in $\RR$, and let $\xi\in \scL^2_\Omega$. Then the unique solution $(y,z)$ to the linear BSDE 
\begin{equation*}
    -\dd y_t =a_t y_t\dd_t + z_t \dd \beta_t,\quad y_T=\xi, 
\end{equation*}
is given by 
\begin{equation*}
    y_t = \EE[\Gamma_T  \Gamma_t^{-1} \xi|\cF_t], 
\end{equation*}
where $\Gamma:[0,T]\rightarrow \RR$ solves the (random) ODE
\begin{equation*}
    \dd \Gamma_t = \Gamma_t a_t \dd t,\quad \Gamma_0=1. 
\end{equation*}
\end{prop}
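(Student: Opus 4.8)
The plan is to reduce to the classical existence--uniqueness theory for Lipschitz BSDEs and then to identify the solution by an integration-by-parts computation against the integrating factor $\Gamma$.

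First I would record that, since $a$ is bounded and progressively measurable, the driver $(t,\omega,y,z)\mapsto a_t(\omega)\,y$ is uniformly Lipschitz in $(y,z)$ and the terminal datum satisfies $\xi\in\scL^2_\Omega$; hence by the standard theory of Lipschitz BSDEs (see e.g. \cite{pham_09_continuous}) there is a unique pair $(y,z)$, with $y$ continuous and adapted, $\EE\big[\sup_{t\le T}|y_t|^2\big]<\infty$, and $z$ progressively measurable with $\EE\int_0^T z_t^2\,\dd t<\infty$, solving the equation. It therefore suffices to show that any such solution admits the stated representation. I would also note the elementary deterministic bounds on the integrating factor: because $|a_t|\le \|a\|_\infty$ for all $t$, the linear random ODE $\dd\Gamma_t=\Gamma_t a_t\,\dd t$, $\Gamma_0=1$, has the pathwise solution $\Gamma_t=\exp\big(\int_0^t a_s\,\dd s\big)$, so that $e^{-\|a\|_\infty T}\le \Gamma_t\le e^{\|a\|_\infty T}$ for all $t\in[0,T]$, $\PP$-a.s.; in particular $\Gamma$ is continuous, of finite variation, strictly positive, bounded away from $0$ and $\infty$, and $\Gamma_t$ is $\cF_t$-measurable.

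Next I would apply Itô's product rule to $M_t:=\Gamma_t y_t$. Since $\Gamma$ has finite variation there is no quadratic-covariation term, and using $\dd y_t=-a_t y_t\,\dd t-z_t\,\dd\beta_t$ one obtains
\[
\dd M_t=\Gamma_t\,\dd y_t+y_t\,\dd\Gamma_t=\Gamma_t\big(-a_t y_t\,\dd t-z_t\,\dd\beta_t\big)+y_t\,\Gamma_t a_t\,\dd t=-\Gamma_t z_t\,\dd\beta_t,
\]
so $M$ is a continuous local martingale. By the two-sided bound on $\Gamma$ and the a priori integrability of $z$,
\[
\EE\int_0^T \Gamma_t^2 z_t^2\,\dd t\le e^{2\|a\|_\infty T}\,\EE\int_0^T z_t^2\,\dd t<\infty,
\]
so the stochastic integral $\int_0^{\,\cdot} \Gamma_t z_t\,\dd\beta_t$ is a genuine square-integrable martingale, and hence so is $M$.

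Finally, the martingale property of $M$ on $[0,T]$ gives, for every $t\in[0,T]$,
\[
\Gamma_t y_t=\EE\big[\Gamma_T y_T\,\big|\,\cF_t\big]=\EE\big[\Gamma_T\xi\,\big|\,\cF_t\big],
\]
and dividing by the strictly positive, $\cF_t$-measurable random variable $\Gamma_t$ yields $y_t=\EE\big[\Gamma_T\Gamma_t^{-1}\xi\,\big|\,\cF_t\big]$, the claimed formula; the companion process $z$ is then the integrand obtained from the martingale representation of $M$, consistent with $\dd M_t=-\Gamma_t z_t\,\dd\beta_t$. The only step that requires genuine care is the passage from local to true martingale: this is precisely where the hypothesis that $a$ is bounded enters, through the two-sided bounds on $\Gamma$ and $\Gamma^{-1}$, in combination with the $\scL^2$-bound on $z$ built into the solution space of the BSDE. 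Everything else is a routine application of stochastic calculus.
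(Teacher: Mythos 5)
Your proof is correct. Note, however, that the paper itself does not prove this proposition at all: it is stated as a special case of \cite[Prop.~6.2.1]{pham_09_continuous} and the reader is simply referred there (the same citation is used for the companion Proposition~\ref{prop:bsde_linear_solution} in Appendix~B). What you have written is essentially the standard argument behind that cited result, specialised to a driver with no $z$-dependence: existence and uniqueness from the Lipschitz BSDE theory (the driver $y\mapsto a_t y$ is uniformly Lipschitz since $a$ is bounded, and $\xi\in\scL^2_\Omega$), then identification of the solution by applying the It\^o product rule to $M_t=\Gamma_t y_t$ with the finite-variation integrating factor $\Gamma_t=\exp\bigl(\int_0^t a_s\,\dd s\bigr)$, and upgrading the local martingale $M$ to a true square-integrable martingale via the two-sided bounds $e^{-\|a\|_\infty T}\le\Gamma_t\le e^{\|a\|_\infty T}$ together with the $\scL^2$-bound on $z$. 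That last step is exactly where boundedness of $a$ is used, and you flag it correctly; dividing by the $\cF_t$-measurable, strictly positive $\Gamma_t$ then gives the stated conditional-expectation formula. The only implicit hypothesis worth making explicit is that $\{\cF_t\}$ is the (augmented) Brownian filtration, which is needed both for the uniqueness statement and for recovering $z$ from the martingale representation of $M$; the paper's statement is equally silent on this, so it is not a gap specific to your argument. In short, your self-contained integrating-factor proof is a valid and somewhat more informative substitute for the paper's bare citation.
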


As illustrated in the next section, the so called stochastic maximum principle guarantees that the solution to the HJB equation is tightly connected to the the solution of certain BSDEs. Thus, when the Hamiltonian in the HJB equation is linear (to be specified later) it turns out that the optimal control can be explicitly found through the solution to the above BSDE.

\section{Technical Lemmas}

We collect some useful results on applications of the Leibniz rule and explicit solutions to linear BSDE.
\begin{lem}\label{lem:diff_two_param_function_integral}
   Let $f\in C^1(\mbR_+\times \mbR_+ ;\mbR)$ and $\bnu = \{\nu_t\}_{t\in \mbR}$ be a flow of \emph{regular demographic measures} in the sense of Definition~\ref{def:dem_flow} with density $\bn \coloneqq \{n(t,\,\cdot\,)\}_{t\in \mbR}$. Then, it holds that
\begin{align*}
    \frac{\dd}{\dd t} \int_{t-L}^t f(t,b) \nu_t(\dd b)  = &\, f(t,t)n(t,t) -f(t,t-L)n(t,t-L) + \int_{t-L}^t \partial_t f(t,b) n(t,b)\dd b \\
    &\, + \int^t_{t-L} f(t,b) \partial_t n(t,b) \dd b.
\end{align*}
\end{lem}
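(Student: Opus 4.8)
The statement is a Leibniz-type differentiation rule for an integral with both a moving domain and a time-dependent density. Since $\bnu$ is a flow of regular demographic measures, $\nu_t(\dd b) = n(t,b)\dd b$, so the quantity of interest is
\[
    G(t) \coloneqq \int_{t-L}^t f(t,b)\, n(t,b)\, \dd b,
\]
and I want to show $G'(t)$ equals the claimed four-term expression. The plan is to reduce everything to the classical Leibniz integral rule (differentiation under the integral sign with variable limits) applied to the single integrand $g(t,b)\coloneqq f(t,b)n(t,b)$.

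\emph{Step 1: regularity of the integrand.} By hypothesis $f\in C^1(\mbR_+\times\mbR_+;\mbR)$, and by part~(iii)--(iv) of Definition~\ref{def:dem_flow} the density satisfies $b\mapsto n(t,b)$ continuous, $t\mapsto n(t,b)$ differentiable, and $\sup_{t}\sup_{b\in[t-L,t]}|\partial_t n(t,b)|<\infty$. Hence on the relevant strip $\{(t,b): b\in[t-L,t]\}$ the product $g=fn$ has a continuous partial derivative $\partial_t g = (\partial_t f)\,n + f\,(\partial_t n)$, and $g$, $\partial_t g$ are bounded on compact $t$-intervals. This is exactly the hypothesis needed to apply the Leibniz rule.

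\emph{Step 2: apply the Leibniz integral rule.} With $a(t)=t-L$ (so $a'(t)=1$) and $c(t)=t$ (so $c'(t)=1$), the standard rule gives
\[
    G'(t) = g(t,t)\cdot c'(t) - g(t,t-L)\cdot a'(t) + \int_{t-L}^t \partial_t g(t,b)\,\dd b.
\]
Substituting $g(t,t)=f(t,t)n(t,t)$, $g(t,t-L)=f(t,t-L)n(t,t-L)$, and $\partial_t g = (\partial_t f)n + f(\partial_t n)$, then splitting the integral into two pieces, yields precisely
\[
    G'(t) = f(t,t)n(t,t) - f(t,t-L)n(t,t-L) + \int_{t-L}^t \partial_t f(t,b)\,n(t,b)\,\dd b + \int_{t-L}^t f(t,b)\,\partial_t n(t,b)\,\dd b,
\]
which is the assertion.

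\emph{Main obstacle.} There is no deep obstacle here; the only point requiring care is verifying that the hypotheses of the Leibniz rule genuinely hold on the moving strip rather than on a fixed rectangle — i.e.\ that the uniform bound on $\partial_t n$ from Definition~\ref{def:dem_flow}(iv), together with local boundedness of $f$ and $\partial_t f$, suffices to justify differentiating under the integral sign when the limits themselves depend on $t$. One clean way to handle this rigorously is a change of variables $b = t - L s$, $s\in[0,1]$, converting $G(t) = L\int_0^1 f(t,t-Ls)n(t,t-Ls)\,\dd s$ into an integral over the fixed domain $[0,1]$; then the dominated-convergence form of differentiation under the integral applies directly, and one changes variables back. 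I would present the proof via this fixed-domain reduction to keep the justification transparent, then collect terms as above.
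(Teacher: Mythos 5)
Your core computation is exactly the paper's proof: the paper simply writes $\nu_t(\dd b)=n(t,b)\,\dd b$ and invokes the Leibniz integral rule together with the product rule, which is your Step 2 verbatim, so on that level the two arguments coincide. One caution about the rigorization you say you would actually write up: the substitution $b=t-Ls$ does \emph{not} make the justification cleaner under Definition~\ref{def:dem_flow}, because after the change of variables the integrand $f(t,t-Ls)\,n(t,t-Ls)$ depends on $t$ through the \emph{second} argument of $n$ as well, so differentiating under the integral sign over the fixed domain $[0,1]$ would require $\partial_b n(t,b)$ to exist — and the definition of a regular demographic flow only provides continuity in $b$, differentiability in $t$, and a uniform bound on $\partial_t n$. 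The moving-limit form of the Leibniz rule that you use in Step 2 is precisely what avoids needing any $b$-derivative of $n$ (the boundary motion contributes the evaluated terms, while only $\partial_t f\cdot n+f\cdot\partial_t n$ appears inside the integral), so you should keep that direct argument, justifying the interchange via the difference quotient $\frac{1}{h}\bigl(G(t+h)-G(t)\bigr)$ split into boundary strips and a fixed-domain piece, with the uniform bound on $\partial_t n$ and local boundedness of $f,\partial_t f$ supplying the domination, rather than route through the fixed-domain substitution.
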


\begin{proof}
     By assumption we can write
     \begin{equation*}
         \int_{t-L}^t f(t,b) \nu_t(\dd b)  = \int_{t-L}^t f(t,b) n(t, b)\dd b, 
     \end{equation*}
     and the formula on the right hand side follows after applying the Leibniz integral rule and product rule for derivatives.
\end{proof}
\begin{prop}[Solution to linear BSDEs]\label{prop:bsde_linear_solution}
Let $\{\beta_t\}_{t\in [0,L]}$ be an $\mbR$-Brownian motion with random initial condition on a filtered probability space $(\Omega, \cF,\{\cF_t\}_{t\in [0,L]},\PP)$, $r$ be a fixed path in $ C([0,L])$ and $g\in \msL^1(\Omega;\mbR)$. Then, the linear BSDE 
\begin{equation}\label{eq:special lin BSDE}
\begin{aligned}
        -\dd y_t &= r_t\dd t +z_t \dd \beta_t, \quad y_L=g.  
\end{aligned}
\end{equation}
 has a unique strong solution adapted to the filtration $\{\cF_t\}_{t\in [0,L]}$ described by the tuple $(y,z)$ to \eqref{eq:special lin BSDE} where
\begin{equation}\label{eq:explicit BSDE}
    y_t = \EE\left[\exp\left(\int_t^L r_s\dd s\right)g|\cF_t\right], 
\end{equation}
and $z$ is obtained by the martingale representation theorem.
\end{prop}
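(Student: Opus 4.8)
The claim is the specialisation of Proposition~\ref{prop: linear BSDE} to a deterministic, bounded drift coefficient, so the plan is to read off existence, uniqueness and the closed form from that proposition and then produce $z$ explicitly by the martingale representation theorem. (For the formula \eqref{eq:explicit BSDE} to hold, \eqref{eq:special lin BSDE} must be read with driver $r_t y_t$, exactly as it is used in the proof of Theorem~\ref{thm:life_cycle_optimal_rep}; this is the reading adopted below.)

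First I would apply Proposition~\ref{prop: linear BSDE} with $a_t := r_t$ and $\xi := g$. Since $r\in C([0,L];\RR)$ is bounded on the compact interval $[0,L]$ and deterministic, it is bounded and progressively measurable, so the hypotheses are met once $g$ is taken in $\scL^2_\Omega$; in the applications $g=\lambda u_2'(w_L)$ is bounded by Assumption~\ref{ass:utility_reg_intro}, hence square integrable, and for a general $g\in\msL^1(\Omega;\RR)$ one argues directly as in the next paragraph using the $L^1$ martingale representation theorem for the Brownian filtration. The auxiliary random ODE $\dd\Gamma_t = \Gamma_t r_t\,\dd t$, $\Gamma_0 = 1$, is scalar and deterministic, with explicit solution $\Gamma_t = \exp\!\big(\int_0^t r_s\,\dd s\big)$, so that $\Gamma_L\Gamma_t^{-1} = \exp\!\big(\int_t^L r_s\,\dd s\big)$; substituting into the representation of Proposition~\ref{prop: linear BSDE} gives the unique $\{\cF_t\}$-adapted solution $y_t = \EE\big[\exp(\int_t^L r_s\,\dd s)\,g\mid\cF_t\big]$, which is \eqref{eq:explicit BSDE}.

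To extract $z$ I would set $M_t := \EE\big[\exp(\int_0^L r_s\,\dd s)\,g\mid\cF_t\big]$, a uniformly integrable martingale for $\{\cF_t\}_{t\in[0,L]}$, so that $y_t = \exp(-\int_0^t r_s\,\dd s)\,M_t$. By the martingale representation theorem $M$ admits a continuous modification and a representation $M_t = M_0 + \int_0^t \tilde z_s\,\dd\beta_s$ for some progressively measurable $\tilde z$. Since $t\mapsto \exp(-\int_0^t r_s\,\dd s)$ is $C^1$ and of bounded variation, Itô's product rule yields $\dd y_t = -r_t y_t\,\dd t + \exp(-\int_0^t r_s\,\dd s)\,\tilde z_t\,\dd\beta_t$, while $y_L = \exp(-\int_0^L r_s\,\dd s)M_L = g$; hence $z_t := \exp(-\int_0^t r_s\,\dd s)\,\tilde z_t$ solves \eqref{eq:special lin BSDE} and continuity of $y$ is inherited from that of $M$. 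Uniqueness can also be seen directly: the difference of two solutions solves a homogeneous linear BSDE with zero terminal data, and a Grönwall estimate on $\EE|y_t-y_t'|^2$ forces $y\equiv y'$ and then $z = z'$ for $\dd t\otimes\dd\PP$-a.e.\ $(t,\omega)$.

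There is no genuine obstacle in this argument; the only points requiring a word of care are the integrability mismatch between the $\scL^2$ hypothesis of Proposition~\ref{prop: linear BSDE} and the $\msL^1$ hypothesis stated here (handled either by boundedness of $r$ together with the $L^1$ martingale representation theorem, or by boundedness of $g$ in the relevant applications), and the routine bookkeeping of signs and of the first-order drift term $r_t y_t$.
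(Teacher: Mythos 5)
Your proposal is correct, and it correctly identifies (and repairs) a typo in the statement: as written the driver is $r_t$ rather than $r_t y_t$, but only the latter yields the exponential formula \eqref{eq:explicit BSDE}, and this is indeed how the result is used in the proof of Theorem~\ref{thm:life_cycle_optimal_rep}. Where you differ from the paper is in the amount of work done: the paper's proof is a one-line citation of \cite[Prop.~6.2.1]{pham_09_continuous} (of which Proposition~\ref{prop: linear BSDE} in the appendix is the stated special case), whereas you derive the formula from Proposition~\ref{prop: linear BSDE} by solving the deterministic adjoint ODE $\dd\Gamma_t=\Gamma_t r_t\,\dd t$ explicitly, then construct $z$ by hand via the discounted martingale $M_t=\EE[\exp(\int_0^L r_s\,\dd s)g\mid\cF_t]$, the martingale representation theorem and the product rule, and verify uniqueness directly. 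Your route buys a self-contained verification, an explicit description of $z$, and a treatment of the $\msL^1$ versus $\scL^2$ integrability mismatch that the paper glosses over; the paper's citation buys brevity. Two small points of care in your version: the martingale representation step should be justified for the augmented filtration (Brownian filtration enlarged by the independent initial data $(w_0,\eta_0)$), which is standard but worth a word; and the Gr\"onwall uniqueness argument as sketched uses second moments, so for genuinely $\msL^1$ terminal data uniqueness should be phrased within an appropriate class (e.g.\ solutions with $y$ of class (D) and $z$ locally square integrable), exactly as in the $L^1$ BSDE literature — otherwise your argument reduces to the $\scL^2$ case, which suffices for the paper's application since there $g=\lambda u_2'(w_L)$ is bounded.
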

\begin{proof}
See \cite[Prop.~6.2.1.]{pham_09_continuous}. 
\end{proof}

\bibliographystyle{apalike}
\bibliography{bib}
\end{document}